\newtheorem{formula}{}[section]
\newtheorem{proposition}[formula]{Proposition}
\newtheorem{corollary}[formula]{Corollary}
\newtheorem{lemma}[formula]{Lemma}
\newtheorem{theorem}[formula]{Theorem}
\newtheorem*{theorem*}{Theorem}
\theoremstyle{definition}
\newtheorem{definition}[formula]{Definition}
\newtheorem{construction}[formula]{Construction}
\newtheorem{example}[formula]{Example}
\newtheorem{notation}[formula]{Notation}
\theoremstyle{remark}
\newtheorem{remark}[formula]{Remark}
\newtheorem{question}[formula]{Question}
\newtheorem*{problem*}{Problem}
\begin{document}

\title[Geometrization of $3$-manifolds defined by vector-colourings]{Canonical geometrization of orientable 
$3$-manifolds defined by vector-colourings of $3$-polytopes}
\author[N.Yu.~Erokhovets]{Nikolai~Erokhovets}
\address{Steklov Mathematical Institute of Russian Academy of Sciences, Moscow, Russia}
\email{erochovetsn@hotmail.com}

\def\sgn{\mathrm{sgn}\,}
\def\bideg{\mathrm{bideg}\,}
\def\tdeg{\mathrm{tdeg}\,}
\def\sdeg{\mathrm{sdeg}\,}
\def\grad{\mathrm{grad}\,}
\def\ch{\mathrm{ch}\,}
\def\sh{\mathrm{sh}\,}
\def\th{\mathrm{th}\,}
\def\RZ{\mathbb{R}\mathcal{Z}}
\def\mod{\mathrm{mod}\,}
\def\In{\mathrm{In}\,}
\def\Im{\mathrm{Im}\,}
\def\Ker{\mathrm{Ker}\,}
\def\Hom{\mathrm{Hom}\,}
\def\Tor{\mathrm{Tor}\,}
\def\rk{\mathrm{rk}\,}
\def\codim{\mathrm{codim}\,}

\def\ko{{\mathbf k}}
\def\sk{\mathrm{sk}\,}
\def\RC{\mathrm{RC}\,}
\def\gr{\mathrm{gr}\,}

\def\R{{\mathbb R}}
\def\C{{\mathbb C}}
\def\Z{{\mathbb Z}}
\def\A{{\mathcal A}}
\def\B{{\mathcal B}}
\def\K{{\mathcal K}}
\def\M{{\mathcal M}}
\def\N{{\mathcal N}}
\def\E{{\mathcal E}}
\def\G{{\mathcal G}}
\def\D{{\mathcal D}}
\def\F{{\mathcal F}}
\def\L{{\mathcal L}}
\def\V{{\mathcal V}}
\def\H{{\mathcal H}}




\subjclass[2010]{
52B70, 
53C15,  
52B10, 
57R91
}

\keywords{Geometrization, $JSJ$-decomposition,  vector-colouring, $k$-belt, small cover, almost Pogorelov polytope}

\begin{abstract}
In short geometrization conjecture of W.\,Thurston (finally proved by G.~Perelman) says that
any oriented $3$-manifold can be canonically partitioned into pieces, which have a geometric structure of one of the eight types.
In the seminal paper (1991) M.\,W.\,Davis and T.\,Januszkiewicz introduced a wide class of $n$-dimensional manifolds
-- small covers over simple $n$-polytopes. 
We give a complete answer to the following problem: to build an explicit canonical decomposition for any orientable $3$-manifold defined by a vector-colouring of a simple $3$-polytope, in particular for a small cover. 
The proof is based on analysis of results in this direction obtained before  by different authors.

\end{abstract}
\maketitle

\setcounter{section}{0}

\section*{Introduction}
Toric topology (see \cite{BP15}) allows one to give explicit examples for deep results in various fields of mathematics.
In this paper we consider Thurston's geometrization conjecture, which was finally proved by G.\,Perelman.
To formulate the precise result we will need some definitions. We follow the exposition in the book \cite{AFW15}. All the 
precise references and additional details can be found there. Also we recommend the surveys \cite{B02}, 
\cite{S83}, and \cite{T02}.

\begin{definition}
A {\it $3$-dimensional geometry} is a smooth, simply connected $3$-manifold $X$ equipped
with a smooth, transitive action of a Lie group $G$ by diffeomorphisms on $X$, with
compact point stabilizers.  The group $G$ is required to be maximal among all such groups for~$X$.

A {\it geometric structure} on a $3$-manifold $N$ (modeled on $X$) is a diffeomorphism from
the interior of $N$ to $X/\pi$, where $\pi$ is a discrete subgroup of $G$ acting freely on $X$.

Thurston showed that, up to a certain equivalence, there exist precisely eight
$3$-dimensional geometries that model compact $3$-manifolds. These are:
the sphere $S^3$, the Euclidean space $\mathbb R^3$, the hyperbolic space 
$\mathbb{H}^3$, $S^2\times \mathbb R$, $\mathbb H^2\times\mathbb  R$, 
the universal cover $\widetilde{SL(2,\mathbb{R})}$ of $SL(2,\mathbb {R})$, and two further geometries called
$Nil$ and $Sol$. We will say that $N$ has a geometric structure of finite volume, if $X/\pi$ has a finite volume.
\end{definition}

After Perelman's work geometrization theorem conjectured by W.~Thurston can be formulated in the following way 
(see \cite[Theorem 1.9.1]{AFW15} and \cite[Introduction]{MT14}). 
\begin{theorem}[Geometric decomposition theorem]\label{GDT}
Let $N$ be a compact closed, orientable, irreducible $3$-manifold. 
There is a (possibly empty) collection of disjointly
embedded incompressible surfaces $S_1$, $\dots$, $S_m$, which are either tori or Klein
bottles, such that each component of $N$ cut along $S_1\cup\dots\cup S_m$ has a geometric structure. Any such
collection with a minimal number of components is unique up to isotopy.
\end{theorem}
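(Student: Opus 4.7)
The plan is to reduce the statement to two classical ingredients—the torus decomposition of Jaco--Shalen--Johannson (JSJ) and the recognition/hyperbolization of the resulting pieces—and then invoke the Hamilton--Perelman Ricci flow with surgery to handle the atoroidal pieces that are not Seifert fibered. Since $N$ is already assumed to be closed, orientable and irreducible, there is no prime decomposition step to worry about: we are working with a single irreducible piece from the start.

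First I would produce the cutting surfaces. By the JSJ theorem, in an irreducible orientable $3$-manifold there is a canonical (up to isotopy) minimal collection of disjoint incompressible tori $T_1,\dots,T_k$ such that each component of $N$ cut along them is either atoroidal or Seifert fibered. I would then upgrade this to the statement involving Klein bottles as in Theorem \ref{GDT}: in the non-orientable setting an incompressible Klein bottle can replace a torus when the torus bounds a twisted $I$-bundle, and for orientable $N$ the relevant Klein bottles appear only as boundaries of solid Klein-bottle-like pieces inside Seifert factors; handling this bookkeeping is straightforward but careful. The output is the collection $S_1,\dots,S_m$ in the statement.

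Next I would equip each piece $M_i$ of $N\setminus(S_1\cup\dots\cup S_m)$ with one of Thurston's eight geometries. For Seifert fibered pieces this is a direct case analysis in terms of the sign of the Euler characteristic of the base orbifold and of the Euler number of the fibration: the six geometries $S^3$, $\mathbb R^3$, $S^2\times\mathbb R$, $\mathbb H^2\times\mathbb R$, $Nil$ and $\widetilde{SL(2,\mathbb R)}$ exhaust all possibilities, and the correspondence is classical work of Thurston (see \cite{S83}). The atoroidal pieces with non-empty boundary (the boundary automatically consisting of tori from the JSJ cutting) must carry either a $Sol$ structure, when the piece is a torus bundle or a pair of twisted $I$-bundles glued along a torus, or a complete hyperbolic structure of finite volume. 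The closed atoroidal case (when no cutting was needed) yields either a Seifert manifold, handled above, or a closed hyperbolic $3$-manifold.

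The main obstacle is exactly the production of the hyperbolic structure on the atoroidal pieces—this is the content of Thurston's hyperbolization conjecture, and in the form needed here it is only known through Perelman's proof of the geometrization conjecture via Ricci flow with surgery. I would therefore invoke this as a black box: starting from any Riemannian metric on the (compactified) piece, run Ricci flow with surgery, analyze the thick--thin decomposition of the resulting singular solution, identify the thin part with graph-manifold pieces consistent with the JSJ tori, and identify the thick part as the finite-volume hyperbolic piece. This is by far the deepest step and cannot be sketched honestly in two paragraphs; I would quote the result in the form given in \cite[Theorem 1.9.1]{AFW15} or \cite{MT14}.

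Finally, uniqueness up to isotopy of the minimal collection follows from the uniqueness part of the JSJ theorem together with the fact that incompressible tori in a piece carrying one of Thurston's eight geometries are very restricted: in $\mathbb H^3$-pieces there are none, in Seifert pieces they are isotopic to vertical or horizontal tori determined by the fibration, and in $Sol$-pieces they are parallel to the fiber of the canonical torus bundle. Combining this with the canonicity of the JSJ decomposition yields the required uniqueness statement.
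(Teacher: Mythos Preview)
The paper does not prove Theorem~\ref{GDT} at all: it is quoted as a known result, with explicit references to \cite[Theorem 1.9.1]{AFW15} and \cite[Introduction]{MT14}, and is used throughout as background. So there is no ``paper's own proof'' to compare your attempt against.

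That said, your sketch is a reasonable high-level outline of how the theorem is actually established in the references the paper cites: JSJ to produce the tori, the classical Seifert/Sol classification for the graph pieces, and Perelman's Ricci flow with surgery for the atoroidal non-Seifert pieces. Two small points of accuracy. First, your description of where Klein bottles enter is slightly off: in the orientable setting the Klein bottles in the geometric decomposition surface arise exactly when a JSJ torus bounds a twisted $I$-bundle $K^2\widetilde{\times}I$, and one replaces that torus by the core Klein bottle (this is precisely the content of Proposition~\ref{GDP} in the paper); they are not ``boundaries of solid Klein-bottle-like pieces inside Seifert factors.'' Second, $Sol$ does not arise for atoroidal pieces with nonempty toral boundary; a $Sol$-manifold is closed (a torus bundle over $S^1$ with Anosov monodromy, or a union of two copies of $K^2\widetilde{\times}I$), and in that case the JSJ collection is nonempty while the geometric decomposition surface is empty---again see Proposition~\ref{GDP}. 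These are bookkeeping issues rather than gaps in the overall strategy, which you correctly identify as resting on Perelman's work as a black box.
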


In \cite{DJ91} M.\,W.\,Davis and T.\,Januszkiewicz introduced a class of $n$-manifolds called
{\it small covers} over simple $n$-polytopes. Each manifold of this type is glued of $2^n$ 
copies of the polytope $P$ and is defined by a mapping $\Lambda$ from the set of facets $\{F_1,\dots, F_m\}$ 
of $P$ to $\mathbb Z_2^n=(\mathbb Z/2\mathbb Z)^n$ such that
for any face the images of  the facets containing it are linearly independent.  
In \cite[Example 1.21]{DJ91} there is a sketch of the proof that geometric decomposition {\bf exists}
for $3$-manifolds realizable as small covers over simple $3$-polytopes: 
{\it $\langle \dots\rangle$ let us consider the $3$-manifolds which arise as small covers of such a $P^3$. 
If $P^3$ has no triangles or squares as faces, then it follows from Andreev's Theorem 
(cf. [Andreev] or [Thurston]) that it can be realized as a right-angled polytope in the hyperbolic
$3$-space. Hence, any small cover carries a hyperbolic structure. If $P^3$ has no triangular
faces, $M^3$ can be decomposed into Seifert fibered or hyperbolic pieces glued
along tori or Klein bottles arising from square faces. In particular, such an $M^3$ is
aspherical. If $P^3$ has triangular faces, then $M^3$ has a decomposition into such pieces
glued along projective planes. Of course, all this fits with Thurston's Conjecture.}

Actually, any mapping $\Lambda\colon \{F_1,\dots, F_m\}\to \mathbb Z_2^r$ such that for any face the images of the facets
containing it are linearly independent, gives a manifold $M(P,\Lambda)$, which we call
a {\it manifold defined by a vector-coloring} $\Lambda$. If $r=n$, then $M(P,\Lambda)$ is a small cover, and if $r=m$ 
and the images of all the facets are linearly independent, 
then $M(P,\Lambda)$ is a real moment-angle manifold $\mathbb R\mathcal{Z}_P$.

In this paper we solve the following
\begin{problem*}
To find {\bf explicitly} the minimal geometric decomposition of any oriented $3$-manifold $M(P,\Lambda)$ defined by 
a vector-colouring $\Lambda$ of a simple $3$-polytope. In particular, of any small cover and real moment-angle manifold
over a simple $3$-polytope.
\end{problem*}

The answer in given in Theorem \ref{M-th}.

It turned out, that to give the explicit minimal geometric decomposition, 
additional arguments should be used. Namely, one needs to
\begin{enumerate}
\item  use {\it $3$-belts} and {\it $4$-belts} in addition to triangular and quadrangular facets (for simplicial polytopes this
corresponds to $3$-cycles not bounding triangles and chordless $4$-cycles);
\item use the canonical decomposition of a simple $3$-polytope into a connected sum along vertices of simplices and {\it flag} $3$-polytopes 
(that is, simple $3$-polytopes different from the simplex and having no $3$-belts).
This corresponds to the Kneser-Milnor decomposition of oriented $3$-manifolds into connected sums of 
prime manifolds;
\item  use right-angled hyperbolic polytopes of finite volume and simple polytopes 
obtained by cutting off their ideal vertices (such polytopes were studied in \cite[Section 10.3]{DO01} and \cite{E19}, we call them together with the cube and the $5$-prism  {\it almost Pogorelov polytopes}); 
\item  find a canonical "minimal" decomposition of a flag simple $3$-polytope along $4$-belts. This decomposition is a 
special case of a more general decomposition of a so-called Coxeter orbifold in \cite{S09}.
It turned out, that there may be many non-equivalent ways to cut a flag polytope along $4$-belts into almost Pogorelov polytopes.
Such decompositions are used in \cite{DO01} (see also \cite[after Theorem 20.5.2]{D08})  to prove the Singer conjecture on 
the vanishing of the reduced ${\it l^2}$-homology except in the middle
dimension for closed aspherical manifolds for the special case of $3$-manifolds arising from right-angled Coxeter groups. 
The paper \cite{S09} also is motivated by this topic.
The canonical decomposition of a flag $3$-polytope corresponds to the $JSJ$-decomposition of the oriented 
irreducible $3$-manifold $M(P,\Lambda)$. The $JSJ$-tori, which  
are defined up to an isotopy, correspond to the "canonical" $4$-belts and to the quadrangles of almost Pogorelov polytopes which 
are not used in the decomposition.  Each "free quadrangle" of an almost Pogorelov polytope corresponds to 
incompressible submanifolds, which is either tori $T^2$ (if the corresponding vector $\Lambda_i$ does not lie in the subspace
spanned by the vectors of adjacent facets), or the Klein bottles $K^2$ (if $\Lambda_i$ lies in the subspace
spanned by the vectors of adjacent facets), and the $4$-belt around the quadrangle 
corresponds to the boundaries of
the tubular neighbourhoods of this submanifolds.  
The tori $T^2$ are $JSJ$-tori themselves, and in the case of $K^2$
the boundaries of the tubular neighbourhoods are $JSJ$-tori; 
\item  prove that the arising tori are incompressible. For this we use  a retraction of a real moment-angle complex
to a subcomplex corresponding to the full subcomplex. This method 
was communicated to the author by Taras Panov (see \cite[Exercise 4.2.13]{BP15} 
and \cite[Proposition 2.2]{PV16}).
An alternative way of reasoning using an explicit description of the fundamental group can be found in \cite{W19,WY17,LW20}.
For submanifolds corresponding to facets the incompressibility is proved as \cite[Theorem 3.3]{WY17}. Another approach to this result is presented in \cite{DO01,DJS98,D08} and is based on the fact that  a facet  submanifold
is a totally geodesic hypersurface in $M(P,\Lambda)$ 
with the structure of a cubical complex, which is nonpositively curved in the sense of Alexandrov and Gromov \cite{G87}.

As it was mentioned above some of the $JSJ$-tori corresponding to $4$-belts around 
quadrangles bound orientable manifolds, which are $I$-bundles over $K^2$. 
These pieces can be endowed with a Euclidean geometry with infinite volume. To reduce the number of geometric pieces and 
to make all of them of finite volume these tori should be replaced with the corresponding Klein bottles; 
\item write explicitly a geometrization of the arising pieces. This can be done using the 
construction invented by A.D.~Mednykh and A.Yu.~Vesnin \cite{MV86, V87, M90, VM99, V17} in the case of right-angled
polytopes of finite volume in $\mathbb R^3$, $S^3$, $\mathbb{H}^3$, $S^2\times \mathbb R$ and $\mathbb{H}^2\times \mathbb R$. The construction builds a subgroup of the right-angled Coxeter group generated by reflections in facets of the polytope.
This subgroup is defined by the characteristic function and acts freely on the space. The orbit space can be identified with a piece of the manifold. In our case each flag simple $3$-polytope is canonically decomposed along $4$-belts into polytopes which can 
be realized as right-angled polytopes in
these five geometries (for pieces corresponding to hyperbolic geometry we also delete all the remaining quadrangles). 
For geometries $\mathbb{H}^3$ and $S^3$ such a realization is unique up to isometries, and for 
$\mathbb{H}^2\times \mathbb R$, $\mathbb R^3$, and $S^2\times \mathbb R$ such a realization is not unique.
\end{enumerate}

Prime orientable $3$-manifolds $M(P,\Lambda)$ correspond to 
\begin{enumerate}
\item the simplex $\Delta^3$: $M(P,\Lambda)$ is either $S^3$, or $\mathbb RP^3$. Both of them are irreducible; 
\item the $3$-prism $\Delta^2\times I$:  $M(P,\Lambda)$ is either $S^2\times S^1$, which is prime, or  
$\mathbb{R}P^3\#\mathbb{R}P^3$, which is not prime;
\item flag simple $3$-polytopes: $M(P,\Lambda)$ is aspherical, therefore irreducible. 
\end{enumerate}

On orientable manifolds $M(P,\Lambda)$ we obtain geometric structures of finite volume modelled~on:

\begin{itemize}
\item $S^3$:\quad $\Delta^3$ can be realized as a right-angled polytope in $S^3$
and a realization is unique (up to isometries of $S^3$).  Each $M(P,\Lambda)$ is a
compact closed spherical manifold $S^3$ or $\mathbb RP^3$.

\item $S^2\times \mathbb R$: \quad $\Delta^2\times I$ can be realized as a right-angled polytope in $S^2\times \mathbb R$. A right-angled realization
of $\Delta^2\subset S^2$ is unique (up to isometries), while for $I$ in $\mathbb R$ it is not. Each $M(P,\Lambda)$ is a
compact closed manifold $S^2\times S^1$ or $\mathbb RP^3\#\mathbb RP^3$ with a geometric structure modelled on $S^2\times \mathbb R$.

\item $\mathbb R^3$,  $\mathbb H^2\times \mathbb R$, $\mathbb{H}^3$: \quad A flag simple $3$-polytope is either the cube $I^3$ 
(the $4$-prism), or the $k$-prism with $k\geqslant 5$, or a Pogorelov polytope (that is, a flag polytope without $4$-belts), 
or none of the above. 

\begin{itemize}
\item $\mathbb R^3$:\quad the cube $I^3$ can be realized as a right-angled polytope in $\mathbb R^3$, 
and a realization is not unique. We have $\mathbb R\mathcal{Z}_{I^3}=T^3=S^1\times S^1\times S^1$, and any manifold $M(I^3, \Lambda)$ is a compact closed  Seifert fibered manifold with a Euclidean geometry. 

\item $\mathbb H^2\times \mathbb R$:\quad the $k$-prism, $k\geqslant 5$, can be realized as a right-angled polytope in $\mathbb H^2\times\mathbb R$, and 
a realization is not unique. Each $M(P,\Lambda)$ is a compact closed Seifert fibered manifold with a geometric structure modelled on $\mathbb H^2\times \mathbb R$.

\item $\mathbb{H}^3$:\quad a Pogorelov polytope can be realized as a bounded right-angled polytope in $\mathbb H^3$, and a realization is unique (up to isometries).
Each $M(P,\Lambda)$ is a compact closed hyperbolic manifold.

\item $\mathbb H^2\times \mathbb R$, $\mathbb{H}^3$:\quad if a flag $3$-polytope is not a $k$-prism and not a Pogorelov polytope, then, as it was mentioned above, it can be canonically 
cut along $4$-belts into pieces which are either $k$-prisms, $k\geqslant 5$, without a set of disjoint quadrangles or almost Pogorelov polytopes without a set of disjoint quadrangles corresponding to the "canonical" belts. 

\begin{itemize}
\item A $k$-prism, $k\geqslant 5$, can be realized as an (unbounded) right-angled polytope of finite volume in 
$\mathbb H^2\times\mathbb R$ and each deleted quadrangle corresponds 
to a product of an ideal vertex and the segment. A realization is not unique. The corresponding pieces of $M(P,\Lambda)$
are Seifert fibered manifolds with geometric structures modelled on $\mathbb H^2\times\mathbb R$.

\item An almost Pogorelov polytope with all its quadrangles (both corresponding to "canonical" $4$-belts and "free") deleted 
can be realized as an (unbounded) right-angled polytope of finite volume in $\mathbb{H}^3$, and each deleted quadrangle corresponds to an ideal vertex. A realization is unique (up to isometries). The corresponding pieces of $M(P,\Lambda)$ are 
hyperbolic manifolds of finite volume.
\end{itemize}
\end{itemize}
\end{itemize}

The paper is organized as follows.

In Section \ref{Sec:RMAM} we give definitions and  basic facts on manifolds defined by vector-colourings.
In particular, in Proposition \ref{orprop} we give a criterion when the manifold $M(P,\Lambda)$ is orientable (generalizing the case
of small covers considered in \cite{NN05}), and in Proposition \ref{doubleprop} 
for a non-orientable manifold $M(P,\Lambda)$ we construct the orientable double cover $M(P,\widehat{\Lambda})$. 
We consider the partially ordered set $\mathcal{F}(P)$ of subgroups 
$H(\Lambda)\subset \mathbb Z_2^m$ acting freely on $\mathbb R\mathcal{Z}_P$ and in Example \ref{exas}
present a subgroup of dimension less than $m-3$ corresponding to a maximal element.

In Section \ref{Sec:bo} we develop the technique of $k$-belts of simple $3$-polytopes. We introduce the notions of 
a nested family of belts and nested family of curves corresponding to it. In Lemma \ref{NFC-lemma} we prove
that any nested family of belts admits a nested family of curves, and in Lemma \ref{lem:iso} -- that any two nested
families of curves corresponding to the same family of belts are isotopic in this class. 
We consider the operations of cutting a $3$-polytope along a belt, 
a connected sum of two $3$-polytopes along vertices and along facets surrounded by belts. In Corollaries \ref{3bplane}
and \ref{4bplane} we prove that for any nested family of $3$-belts and any nested family of $4$-belts there is a geometric
realization of a $3$-polytope $P$ such that disjoint planar sections of $P$ give the corresponding nested families of curves.  

In Section \ref{Sec:prime} we build the prime decomposition of any orientable $3$-manifold $M(P,\Lambda)$. First, in Proposition 
\ref{CSprop} we express the manifold corresponding to a connected sum of simple $3$-polytopes along  
vertices as a connected sum of copies of the corresponding manifolds and copies of $S^2\times S^1$. Then in Theorem \ref{3th}
we prove the main result of this section.

In Section \ref{Sec:main} we prove the main results of the article. First, in Subsections \ref{JSJgen} and \ref{JSJgeom} we present
a general information on a $JSJ$-decomposition and a geometrization. Then in Subsection \ref{MPJSJ} on the base of this information we give the $JSJ$-decomposition and the minimal geometric decomposition for any irreducible orientable $3$-manifold 
$M(P,\Lambda)$. The main result of the article is Theorem \ref{M-th}. It is proved in several steps. First, in Subsubsection
\ref{flagdec} we build a canonical decomposition of a flag $3$-polytope along $4$-belts and prove its uniqueness. Then, 
in Subsubsection \ref{BFinc} we study submanifolds corresponding to belts and facets surrounded by belts.  
In Propositions \ref{Binprop} and \ref{Finprop} we describe them explicitly and prove that they are incompressible. In Subsubsection \ref{JSJlast} we finish the proof of Theorem \ref{M-th}. In Proposition \ref{MPprop} we 
describe explicitly the pieces corresponding to $k$-prisms, in particular, their Seifert fibered structure. Then we prove that the constructed family of tori is indeed the family of all the $JSJ$-tori. At last, we prove that $M(P,\Lambda)$ can not be a $Sol$-manifold and build an explicit geometrization using Construction \ref{MVconstr} by A.D.\,Mednykh and Yu.\,Vesnin.

\section{Manifolds defined by vector-colourings}\label{Sec:RMAM}
For an introduction to the polytope theory we recommend \cite{Z95}.  
In this paper by a {\it polytope} we call an $n$-dimensional combinatorial convex polytope (in most cases $n$ will be equal to $3$). Sometimes we implicitly use  its geometric realization in $\mathbb R^n$ and sometimes we use it explicitly. In the latter case we call the polytope {\it geometric}.
A polytope is simple, if any its vertex is contained in exactly $n$ facets.
Let $\{F_1,\dots,F_m\}$ be the set of all the facets, and 
$\mathbb Z_2=\Z/2\Z$.
\begin{definition} 
For each geometric simple polytope $P$ one can associate an $n$-dimensional
{\it real moment-angle manifold}:
$$
\RZ_P=P\times \mathbb Z_2^m/\sim, \text{ where }(p,a)\sim(q,b)\text{ if and only if }p=q \text{ and }a-b\in\langle e_i\colon p\in F_i\rangle,
$$ 
where $e_1,\dots, e_m$ is a basis in $\mathbb Z_2^m$ as a vector space.
\end{definition}
The space $\mathcal{Z}_P$ was introduced in \cite{DJ91}. 
It is convenient to imagine  $\RZ_P$ as a space glued from copies of the polytope $P$
 along facets. If we fix an orientation on $P\times 0$, then define on the polytope $P\times a$ the same orientation, 
 if $a$ has an even number of unit coordinates, and the opposite orientation, in the other case.  
 A polytope $P\times a$ is glued to the polytope $P\times (a+e_i)$ along the facet $F_i$. At each vertex 
 the polytopes are arranged as coordinate orthants in $\mathbb R^n$, at each edge -- as the orthants at a coordinate axis,
 and at face of dimension $i$ -- as the orthants at an $i$-dimensional coordinate subspace. Therefore,
 $\RZ_P$ has a natural structure of an oriented piecewise-linear manifold.  There is a natural action of $\mathbb Z_2^m$
 on $\RZ_P$ induced from the action on the second factor. The actions of basis vectors $e_i$ can be viewed as
 reflections in facets of the polytope.
\begin{example}\label{ZPkex}
For a $k$-gon $P_k$  the closed orientable $2$-dimensional manifold $\mathbb R\mathcal{Z}_{P_k}$ is homeomorphic to 
a sphere with $g$ handles.  The genus $g$ can be calculated using the Euler characteristic: 
$$
2-2g=\chi(\mathbb R\mathcal{Z}_{P_k})=f_0-f_1+f_2=k2^{k-2}-k2^{k-1}+2^k.
$$
Thus, $g=(k-4)2^{k-3}+1$.
\end{example}

There is another representation of $\RZ_P$. By a {\it simplicial complex $K$} 
on the set of vertices $[m]=\{1,\dots,m\}$ we mean an abstract simplicial complex, 
that is a collection of subsets $\sigma\subset[m]$ such that
for any $\sigma\in K$ and any $\tau\subset\sigma$ we have $\tau\in K$. The subsets $\sigma\in K$ are called {\it simplices}.

Let $D^1=[-1,1]$ and $S^0=\{-1,1\}=\partial D^1$. 
\begin{definition}
A {\it real moment-angle complex} 
of a simplicial complex $K$  is defined as:
$$
\mathbb{R}\mathcal{Z}_K=\bigcup\limits_{\sigma\in K}(D^1,S^0)^\sigma, \text{ where }(D^1,S^0)^\sigma=X_1\times\dots\times  X_m,\;X_i=\begin{cases} D^1,&i\in \sigma;\\
S^0,&i\notin\sigma\end{cases}.
$$
\end{definition}
There is a natural action of $\mathbb Z_2^m$ on $\mathbb R\mathcal{Z}_K$, which arises from the identification of the 
multiplicative group $\{-1,1\}$ with the additive group $\Z_2=\{0,1\}$. This group acts by changing the signs of the coordinates. 

Each simple polytope $P$ corresponds to a simplicial complex $K_P=\{\sigma\subset [m]\colon \bigcap_{i\in \sigma}F_i\ne\varnothing\}$, which is isomorphic to the boundary complex $\partial P^*$ of the dual simplicial polytope.  

\begin{proposition}[see \cite{BP15}, and also \cite{BE17S}]\label{RZKP}
There is an equivariant homeomorphism 
$$
\RZ_{K_P}\simeq \mathbb R\mathcal{Z}_P.
$$ 
\end{proposition}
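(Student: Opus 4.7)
The plan is to realise $P$ explicitly as a cubical subcomplex of $[0,1]^m$, extend this embedding across $\mathbb Z_2^m$ by sign changes, and check that the result is the prescribed moment-angle complex.

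First, I would set up the \emph{cubical structure} on $P$. For each non-empty face $G$ of $P$ (including $P$ itself) fix an interior point $b_G$. At a vertex $v=F_{i_1}\cap\dots\cap F_{i_n}$, the closed star of $v$ in the barycentric subdivision of $P$ is canonically identified with the cube $[0,1]^n$ by sending $v\mapsto 0$, $b_P\mapsto(1,\dots,1)$, and in general $b_G\mapsto$ the characteristic vector of $\{j\colon G\subset F_{i_j}\}$. Assembling these local identifications over all vertices produces a canonical homeomorphism
\[
\phi\colon P\;\xrightarrow{\cong}\;\bigcup_{\sigma\in K_P} C_\sigma\ \subset\ [0,1]^m,\qquad C_\sigma=\{y\in[0,1]^m\colon y_i=1\text{ for }i\notin\sigma\},
\]
satisfying $\phi_i^{-1}(0)=F_i$ for every facet $F_i$.

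Next I would define $\Phi\colon P\times\mathbb Z_2^m\to[-1,1]^m$ by $\Phi(p,a)=\varepsilon(a)\cdot\phi(p)$, where $\varepsilon(a)_i=(-1)^{a_i}$ and the multiplication is coordinatewise. Since $\phi_i(p)=0$ exactly when $p\in F_i$, one sees $\Phi(p,a)=\Phi(p,a+e_i)$ iff $p\in F_i$, so $\Phi$ descends to a continuous map $\overline\Phi\colon\RZ_P\to[-1,1]^m$. If $\phi(p)\in C_\sigma$ then $|\Phi(p,a)_i|=\phi_i(p)=1$ for $i\notin\sigma$, so $\Phi(p,a)\in(D^1,S^0)^\sigma$; hence $\overline\Phi$ lands in $\RZ_{K_P}$.

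Injectivity follows because $|\Phi(p,a)|=\phi(p)$ recovers $p$, after which the signs determine $a$ modulo $\langle e_i\colon p\in F_i\rangle$, which is precisely the relation defining $\RZ_P$. For surjectivity, if $x\in(D^1,S^0)^\sigma\subset\RZ_{K_P}$ then $|x|\in C_\sigma\subset\phi(P)$, so $|x|=\phi(p)$ for a unique $p$, and any $a$ with $\varepsilon(a)_i$ matching the sign of $x_i$ on the coordinates where $x_i\neq 0$ gives $\overline\Phi([p,a])=x$. Compactness of $\RZ_P$ and Hausdorffness of $\RZ_{K_P}$ upgrade the bijection to a homeomorphism, and equivariance is immediate from $\varepsilon(a+b)=\varepsilon(a)\varepsilon(b)$.

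The only genuine work is step one: producing the cubical identification $\phi$ so that its image is exactly $\bigcup_\sigma C_\sigma$ and $\phi_i$ vanishes precisely on $F_i$. This is the standard cubification of a simple polytope coming from its barycentric subdivision, and once it is in hand the rest of the argument is formal bookkeeping with the sign-flip action.
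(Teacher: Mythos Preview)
Your proposal is correct and follows essentially the same route as the paper's sketch: both use the barycentric (cubical) embedding of $P$ into $[0,1]^m$ --- the paper's $b$ is your $\phi$ --- and then extend across $\mathbb Z_2^m$ by the sign-flip action $(p,a)\mapsto a\cdot b(p)$. Your write-up is simply more detailed, supplying the injectivity, surjectivity, and compactness--Hausdorff verifications that the paper's one-line sketch omits. One small slip: to be consistent with $v\mapsto 0$ and $b_P\mapsto(1,\dots,1)$, the local formula should send $b_G$ to the characteristic vector of $\{j\colon G\not\subset F_{i_j}\}$, not of $\{j\colon G\subset F_{i_j}\}$; this is a typo and does not affect the argument.
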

This homeomorphism  implies that
the topological type of $\RZ_P$ does not depend on the geometric realization of $P$.
\begin{proof}[Sketch of the proof of Proposition \ref{RZKP}]
Consider the {\it barycentric embedding} $b\colon P\to [0,1]^m$, which is a piecewise 
linear mapping defined on vertices of the barycentric subdivision of $P$: 
$$
b(v_{G})=(y_1,\dots,y_m), \text{ where $v_G$ is the barycentre of a face $G$, and }y_i=\begin{cases} 0,&G\subset F_i;\\
1,&G\not\subset F_i.\end{cases}
$$
Denote by $[0,1]^P$ the image of this embedding. Then a homeomorphism 
$\mathbb R\mathcal{Z}_P\to \RZ_{K_P}$ is given by the mapping $(x,a)\to a(b(x))$,
where $a(b(x))$ is the result of the action of $a\in\mathbb Z_2^m$ on $b(x)\in [0,1]^P\subset \mathbb{R}\mathcal{Z}_{K_P}$.
\end{proof}

We consider manifolds obtained as orbit spaces of free actions of subgroups $H\subset\mathbb Z_2^m$ on 
$\mathbb R\mathcal{Z}_P$. Each subgroup of $\mathbb Z_2^m$ is isomorphic to $\mathbb Z_2^{m-r}$ for some $r$ and may 
be defined as a kernel of a an epimorphism $\Lambda\colon\mathbb Z_2^m\to\mathbb Z_2^r$. 
Such a mapping is uniquely defined by the images $\Lambda_i\in\mathbb Z_2^r$ of all the vectors 
$e_i\in\mathbb Z_2^m$ corresponding to facets $F_i$, $i=1$,\dots, $m$. 
It can be shown that the action of the subgroup $H(\Lambda)\subset \mathbb Z_2^m$ on $\mathbb R\mathcal{Z}_P$ is free
if and only if 
$$
(*)\quad\text{for any vertex $F_{i_1}\cap\dots\cap F_{i_n}$ of $P$ the vectors $\Lambda_{i_1}$, $\dots$,
$\Lambda_{i_n}$ are linearly independent.} 
$$
\begin{definition}
We call a mapping $\Lambda\colon \{F_1,\dots,F_m\}\to \mathbb Z_2^r$  such that the images $\Lambda_j$
of the facets $F_j$ span $\mathbb Z_2^r$ and $\Lambda$ satisfies (*) {\it a vector-colouring} of rank $r$. 
\end{definition}
\begin{remark}
Sometimes we call by a vector-colouring of rank $r$ a mapping $\Lambda\colon \{F_1,\dots,F_m\}\to \mathbb Z_2^s$, 
$s\geqslant r$, satisfying (*) such that $\dim\,\langle\Lambda_1,\dots,\Lambda_s\rangle=r$. 
\end{remark}
\begin{remark}
This notion is not new. It was implicitly used, for example, in \cite{MV86, V87, DJ91, E08}. The analogy between mappings  $\{F_1,\dots,F_m\}\to \mathbb Z_2^r$ and colourings of facets of polytopes was considered in \cite{A10}. In particular, in \cite{A10}
there is a notion of a {\it linearly independent colouring}, which is equivalent to our vector-colouring. For $r=n$ this notion was used already in \cite{NN05}. If all the vectors $\Lambda_1$, $\dots$, $\Lambda_m$ belong to the basis $\{e_1,\dots,e_r\}$ of $\mathbb Z_2^r$, the manifolds defined by such colourings were studied in \cite{I01}.
\end{remark}
Denote by $M(P,\Lambda)$ the orbit space $\mathbb R\mathcal{Z}_P/H(\Lambda)$ of a free action corresponding to a
vector-colouring of rank $r$.  If we identify $\mathbb Z_2^m/{\rm Ker}\,\Lambda$ with $\mathbb Z_2^r$ via the mapping $\Lambda$, then 
$$
M(P,\Lambda)=P\times \mathbb Z_2^r/\sim, \text{ where }(p,a)\sim(q,b)\text{ if and only if }p=q \text{ and }a-b\in\langle \Lambda_i\colon p\in F_i\rangle.
$$ 
In particular, $M(P,\Lambda)$ has a structure of a piecewise linear manifold glued from $2^r$ copies of $P$. It has
an action of $\mathbb Z_2^r$ such that the orbit space is $P$.
\begin{definition}
We call $M(P,\Lambda)$ {\it a manifold defined by a vector-colouring} $\Lambda$.
\end{definition}
\begin{example}
For $r=m$ and the identical mapping $\Lambda_i=e_i$ the manifold $M(P,E)$ is $\mathbb R\mathcal{Z}_P$. 

For $r=n$ a mapping $\Lambda$ satisfying condition (*) is called a {\it characteristic mapping}, and 
the manifold $M(P,\Lambda)$ is called a {\it small cover}.
\end{example}

\begin{proposition}
For vector-colourings $\Lambda_1$ and $\Lambda_2$ of ranks $r_1$ and $r_2$ of a polytope $P$ we have $H(\Lambda_1)\subset H(\Lambda_2)$ if and only if there is a surjection $\Pi\colon\mathbb Z_2^{r_1}\to \mathbb Z_2^{r_2}$ 
such that $\Pi \circ \Lambda_1=\Lambda_2$.
\end{proposition}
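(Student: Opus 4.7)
The plan is to view both vector-colourings as surjective linear maps $\Lambda_i\colon \mathbb Z_2^m\to \mathbb Z_2^{r_i}$ determined by $e_j\mapsto \Lambda_{i,j}$, so that by construction $H(\Lambda_i)=\ker\Lambda_i$ (the $\Lambda_{i,j}$ span the target by the rank hypothesis, which is what makes each $\Lambda_i$ surjective). Under this identification the statement becomes a standard fact about surjective $\mathbb Z_2$-linear maps: $\ker\Lambda_1\subset\ker\Lambda_2$ if and only if $\Lambda_2$ factors as $\Pi\circ\Lambda_1$ for some surjection $\Pi\colon\mathbb Z_2^{r_1}\to\mathbb Z_2^{r_2}$. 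So the proof reduces to invoking the universal property of the quotient, and there is no real geometric or combinatorial obstacle here.

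For the forward implication, I would argue as follows. Assume $H(\Lambda_1)\subset H(\Lambda_2)$, i.e.\ $\ker\Lambda_1\subset\ker\Lambda_2$. Since $\Lambda_1$ is surjective, it induces an isomorphism $\overline{\Lambda_1}\colon \mathbb Z_2^m/\ker\Lambda_1\xrightarrow{\sim}\mathbb Z_2^{r_1}$. The inclusion of kernels means $\Lambda_2$ descends to a well-defined linear map $\overline{\Lambda_2}\colon \mathbb Z_2^m/\ker\Lambda_1\to \mathbb Z_2^{r_2}$, and we set $\Pi=\overline{\Lambda_2}\circ \overline{\Lambda_1}^{-1}$. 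By construction $\Pi\circ\Lambda_1=\Lambda_2$, and since $\Lambda_2$ is surjective (its image spans $\mathbb Z_2^{r_2}$) and equals $\Pi\circ\Lambda_1$ with $\Lambda_1$ surjective, $\Pi$ must be surjective as well.

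For the reverse implication, suppose $\Pi\circ\Lambda_1=\Lambda_2$ for some surjection $\Pi$. Then for every $x\in \ker\Lambda_1$ we have $\Lambda_2(x)=\Pi(\Lambda_1(x))=\Pi(0)=0$, so $x\in\ker\Lambda_2$, i.e.\ $H(\Lambda_1)\subset H(\Lambda_2)$. The only point worth flagging is the consistency check that this definition of $H(\Lambda)$ as $\ker\Lambda$ matches the subgroup whose free action yields $M(P,\Lambda)$; this was established in the preceding discussion of $M(P,\Lambda)=\mathbb R\mathcal Z_P/H(\Lambda)$, so no additional work is required.
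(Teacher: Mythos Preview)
Your proof is correct and takes essentially the same approach as the paper: both reduce to the elementary linear-algebra fact that for surjective linear maps $\ker\Lambda_1\subset\ker\Lambda_2$ if and only if $\Lambda_2$ factors through $\Lambda_1$ via a (necessarily surjective) map $\Pi$. The paper phrases this in matrix language (each row of $\Lambda_2$ is a linear combination of rows of $\Lambda_1$, hence $\Lambda_2=\Pi\Lambda_1$), while you invoke the universal property of the quotient, but these are the same argument.
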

\begin{proof}
We have $H(\Lambda_1)\subset H(\Lambda_2)$ if and only if each row of the matrix $\Lambda_2$ with columns $\Lambda_{2,i}$ is a linear combination of rows of $\Lambda_1$. This is equivalent to the existence of a surjection $\Pi\colon\mathbb Z_2^{r_1}\to \mathbb Z_2^{r_2}$ such that 
$\Pi(\Lambda_{1,i})=\Lambda_{2,i}$ for all $i=1$, $\dots$, $m$.
\end{proof}
\begin{corollary}
We have $H(\Lambda_1)=H(\Lambda_2)$ if and only if there is a linear isomorphism $\Pi\colon \mathbb Z_2^{r_2}\to\mathbb Z_2^{r_1}$ such that $\Lambda_1=\Pi\circ\Lambda_2$.
\end{corollary}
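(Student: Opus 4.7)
The plan is to derive this corollary directly from the preceding Proposition by applying it in both directions simultaneously, then using a dimension count to upgrade the surjections to isomorphisms.

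First I would handle the forward direction. Assume $H(\Lambda_1)=H(\Lambda_2)$. Then in particular $H(\Lambda_1)\subset H(\Lambda_2)$, so the Proposition yields a surjection $\Pi\colon \mathbb Z_2^{r_1}\to\mathbb Z_2^{r_2}$ with $\Pi\circ\Lambda_1=\Lambda_2$; and $H(\Lambda_2)\subset H(\Lambda_1)$ yields a surjection $\Pi'\colon \mathbb Z_2^{r_2}\to\mathbb Z_2^{r_1}$ with $\Pi'\circ\Lambda_2=\Lambda_1$. The existence of surjections in both directions between finite-dimensional $\mathbb Z_2$-vector spaces forces $r_1\geqslant r_2$ and $r_2\geqslant r_1$, hence $r_1=r_2$. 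Any surjective linear map between two $\mathbb Z_2$-vector spaces of the same finite dimension is an isomorphism, so $\Pi'\colon \mathbb Z_2^{r_2}\to\mathbb Z_2^{r_1}$ is the required linear isomorphism, and $\Lambda_1=\Pi'\circ\Lambda_2$.

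For the converse, suppose such a linear isomorphism $\Pi\colon\mathbb Z_2^{r_2}\to\mathbb Z_2^{r_1}$ exists with $\Lambda_1=\Pi\circ\Lambda_2$. In particular $\Pi$ is a surjection, so by the Proposition (applied with the roles of $\Lambda_1$ and $\Lambda_2$ swapped) we get $H(\Lambda_2)\subset H(\Lambda_1)$. Likewise $\Pi^{-1}\colon\mathbb Z_2^{r_1}\to\mathbb Z_2^{r_2}$ is a surjection satisfying $\Pi^{-1}\circ\Lambda_1=\Lambda_2$, so the Proposition gives $H(\Lambda_1)\subset H(\Lambda_2)$. Together these yield the equality $H(\Lambda_1)=H(\Lambda_2)$.

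There is no real obstacle here; the only point that requires a sentence of justification is the observation that a surjection of $\mathbb Z_2$-vector spaces can only exist when the domain has dimension at least that of the codomain, which immediately forces $r_1=r_2$ once surjections exist in both directions. Everything else is a direct appeal to the Proposition already proved.
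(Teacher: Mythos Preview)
Your proof is correct and is exactly the natural argument the paper leaves implicit: the corollary is stated without proof, and your two applications of the preceding Proposition together with the dimension count constitute precisely the intended derivation.
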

If $H(\Lambda_1)\subset H(\Lambda_2)$, then $M(P,\Lambda_2)$ is an orbit space of a freely acting group 
$H(\Lambda_2)/H(\Lambda_1)$ on $M(P,\Lambda_1)$, in particular there is a covering  $M(P,\Lambda_1)\to M(P,\Lambda_2)$
with the fiber $H(\Lambda_2)/H(\Lambda_1)$. 

The following result is a generalization of the result from  \cite{NN05}, which considers 
only small covers.
\begin{proposition}\label{orprop}
Let the vectors $\Lambda_{j_1}, \dots, \Lambda_{j_r}$ form a basis in $\mathbb Z_2^r$. 
Then the  manifold $M(P,\Lambda)$
is orientable if and only if any $\Lambda_i$ is a linear combination of an odd number of these vectors.
\end{proposition}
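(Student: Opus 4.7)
The strategy is to relate orientability of the quotient $M(P,\Lambda) = \RZ_P/H(\Lambda)$ to the parity of the weights of elements of $H(\Lambda) = \Ker\Lambda$, and then translate this into a condition on the vectors $\Lambda_i$ by linear duality.

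First I would pin down how $\mathbb Z_2^m$ acts on the canonical orientation of $\RZ_P$ recalled at the start of this section. With the convention that the copy $P\times\{a\}$ carries the standard orientation when $|a|=\sum_i a_i$ is even and the opposite one when $|a|$ is odd, these orientations assemble into a global orientation on $\RZ_P$ (this is precisely what makes the gluing across facets compatible, since a reflection across $F_i$ is orientation-reversing). The generator $e_i$ sends $P\times\{a\}$ identically onto $P\times\{a+e_i\}$, two adjacent copies which carry opposite orientations; hence $e_i$ acts on $\RZ_P$ by an orientation-reversing homeomorphism, and by composition an element $v\in\mathbb Z_2^m$ acts by an orientation-preserving homeomorphism if and only if $|v|$ is even.

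Since $H(\Lambda)=\Ker\Lambda$ acts freely, the quotient $M(P,\Lambda)$ is orientable if and only if every element of $H(\Lambda)$ preserves the orientation of $\RZ_P$, equivalently $\Ker\Lambda\subset\Ker\sigma$, where $\sigma\colon\mathbb Z_2^m\to\mathbb Z_2$ is the sum-of-coordinates functional $\sigma(v)=|v|\bmod 2$. By standard linear duality over $\mathbb Z_2$, this containment holds precisely when $\sigma$ factors through $\Lambda$, i.e.\ when there exists a linear functional $\varphi\in\Hom(\mathbb Z_2^r,\mathbb Z_2)$ with $\varphi(\Lambda_i)=1$ for every $i=1,\dots,m$.

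To finish I would invoke the hypothesis that $\Lambda_{j_1},\dots,\Lambda_{j_r}$ form a basis of $\mathbb Z_2^r$: such a $\varphi$ is uniquely determined by the requirement $\varphi(\Lambda_{j_k})=1$ for all $k$, and it exists without further obstruction. For any other facet, expanding $\Lambda_i=\sum_k c_{ik}\Lambda_{j_k}$ gives $\varphi(\Lambda_i)=\sum_k c_{ik}$, which equals $1$ exactly when an odd number of the $c_{ik}$ are nonzero, i.e.\ when $\Lambda_i$ is a sum of an odd number of basis vectors (a condition automatic for $i\in\{j_1,\dots,j_r\}$). Combining these steps yields the stated criterion. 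The only real subtlety is verifying that the sign conventions on the $2^m$ copies of $P$ do assemble into a well-defined global orientation and that $e_i$ reverses it; once this orientation computation is in hand, the rest is routine linear algebra over $\mathbb Z_2$.
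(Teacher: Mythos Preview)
Your argument is correct. The paper's own proof, however, works directly with the description
$M(P,\Lambda)=P\times\mathbb Z_2^r/\!\sim$ rather than passing through the cover $\RZ_P$: one fixes an orientation on $P\times\{0\}$, propagates it to every $P\times\{b\}$ by walking through the ``basis'' facets $F_{j_1},\dots,F_{j_r}$ (each crossing flips the orientation), and then checks that the gluing across an arbitrary facet $F_i$ is consistent exactly when $\Lambda_i$ is reached from $0$ in an odd number of basis steps. Your route is the same idea lifted to $\RZ_P$ and then rephrased in linear-algebraic terms: the parity functional $\sigma$ on $\mathbb Z_2^m$ encodes the orientation sign, orientability of the quotient becomes $\Ker\Lambda\subset\Ker\sigma$, and the factorisation $\sigma=\varphi\circ\Lambda$ with $\varphi(\Lambda_{j_k})=1$ recovers exactly the paper's parity condition. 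What you gain is a clean covering-space statement (orientable quotient $\Leftrightarrow$ the subgroup acts orientation-preservingly) and a formulation that makes the basis-independence transparent; the paper's version is shorter and avoids invoking $\RZ_P$ at all.
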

\begin{proof}
For $M(P,\Lambda)=P\times \mathbb Z_2^r/\sim$ to be orientable it is necessary and sufficient that for any facet $F_i$ of an oriented polytope $P$ 
the polytope $P\times (a+\Lambda_i)$, which is glued to $P\times a$ along this facet, has an opposite orientation. 
Starting from $P\times a$ and using only facets $F_{j_1}$, $\dots$, $F_{j_r}$ 
we can come from $P\times a$ to any $P\times b$, $b\in \mathbb Z_2^r$, which defines uniquely the orientation of any 
polytope $P\times b$. For these orientations to be consistent it is necessary and sufficient that for any facet $F_i$ the polytope
$P\times (a+\Lambda_i)$ is achieved in odd number of steps, which is equivalent to the fact that $\Lambda_i$ is a linear combination of an odd number of vectors $\Lambda_{j_l}$.
\end{proof}

For $3$-polytopes the $4$-colour theorem implies the existence of a small cover over any simple $3$-polytope.
Namely, if all the facets of a $3$-polytope $P$ are coloured in $4$ colours in such a way that adjacent facets have different
colours, then assign to the first three colours the basis vectors $e_1$, $e_2$, $e_3\in\mathbb Z_3$, and to the fours colour
the vector $e_1+e_2+e_3$. Since any three of these four vectors are linearly independent we obtain a characteristic function. 

\begin{corollary}[\cite{NN05}]
A $3$-dimensional small cover $M(P,\Lambda)$ is orientable if and only if its characteristic function corresponds to a colouring
in at most $4$ colours. 
\end{corollary}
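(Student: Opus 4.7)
The plan is to derive this as an immediate consequence of Proposition~\ref{orprop} in the case $r = n = 3$. First I would pick any vertex $F_{j_1} \cap F_{j_2} \cap F_{j_3}$ of $P$; by the characteristic condition~$(*)$, the vectors $v_k := \Lambda_{j_k}$ form a basis of $\mathbb Z_2^3$. Proposition~\ref{orprop} then says that $M(P,\Lambda)$ is orientable if and only if each $\Lambda_i$ equals a sum of an odd number of $v_1, v_2, v_3$. The odd-cardinality nonempty subsets of a three-element set are the three singletons and the full triple, so these sums exhaust exactly the four distinct vectors $v_1,\ v_2,\ v_3,\ v_1+v_2+v_3$.

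Hence orientability is equivalent to the image of $\Lambda$ lying in the 4-element set $C := \{v_1, v_2, v_3, v_1+v_2+v_3\}$. I next check that this is the same condition as saying $\Lambda$ comes from a proper coloring of $\{F_1, \dots, F_m\}$ in at most 4 colors, via the recipe in the paragraph preceding the corollary. If $\Lambda$ takes values in $C$, define a coloring of the facets by the value of $\Lambda$. Two facets sharing an edge lie in a common vertex, and at any vertex condition~$(*)$ forces the three $\Lambda$-values to be linearly independent; in particular, adjacent facets receive different values, yielding a proper coloring in at most 4 colors. Conversely, any proper coloring in at most 4 colors produces, by assigning $e_1, e_2, e_3, e_1+e_2+e_3$ to the colors, a characteristic function whose image lies in $\{e_1, e_2, e_3, e_1+e_2+e_3\}$. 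Applying the linear isomorphism of $\mathbb Z_2^3$ sending $e_k \mapsto v_k$ identifies this with the criterion above, and each such image vector is a sum of an odd number of basis vectors, so by Proposition~\ref{orprop} the manifold is orientable.

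The bookkeeping is routine; the single nontrivial input is the basis of $\mathbb Z_2^3$ supplied by any vertex of $P$, which is what allows Proposition~\ref{orprop} to apply. I do not foresee any real obstacle here: the argument is a direct translation between the algebraic condition of odd-support vectors in a distinguished basis and the combinatorial condition of properly coloring the facet adjacency graph of $P$ in at most 4 colors.
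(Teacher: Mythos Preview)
Your proposal is correct and follows exactly the approach the paper intends: the corollary is stated without proof immediately after Proposition~\ref{orprop} and the paragraph describing the recipe $e_1,e_2,e_3,e_1+e_2+e_3$, and your argument simply fills in the specialization $r=n=3$ and the observation that the odd-support vectors in a three-element basis are precisely $\{v_1,v_2,v_3,v_1+v_2+v_3\}$. One small clarification worth making in the converse direction: rather than building an auxiliary characteristic function and transporting it by an isomorphism, you can argue directly that if $\Lambda$ arises from the recipe then $\mathrm{Im}(\Lambda)\subseteq\{a,b,c,a+b+c\}$ for some basis $a,b,c$, and since the three $\Lambda$-values $v_1,v_2,v_3$ at your chosen vertex already lie in this four-element set (any three of whose members form a basis), one has $\{v_1,v_2,v_3,v_1+v_2+v_3\}=\{a,b,c,a+b+c\}$, whence every $\Lambda_i$ has odd support in $v_1,v_2,v_3$ and Proposition~\ref{orprop} applies.
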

\begin{remark}
For bounded right-angled hyperbolic polytopes the orientability of the manifolds corresponding to colourings is mentioned already in \cite{MV86, V87}. 
\end{remark}
  
The maximal dimension of subgroups in $\mathbb Z_2^m$ acting freely on $\mathbb R\mathcal{Z}_P$ is called a real Buchstaber
number $s_{\mathbb R}(P)$ (see \cite{E08, A10, E14, A16}). It is easy to see that $1\leqslant s_{\mathbb R}(P)\leqslant m-n$. The above arguments imply that
$s_{\mathbb R}(P)=3$ for any simple  $3$-polytope.  

Consider all the subgroups in $\mathbb Z_2^m$ acting freely on $\mathbb R\mathcal{Z}_P$. They form a partially ordered set $\mathcal{F}(P)$ with respect to inclusion. After a discussion of this article with Victor Buchstaber
the following natural questions arose. 
\begin{question}
To describe all maximal elements in $\mathcal{F}(P)$.
\end{question}
\begin{question}
To find the set (or the multiset) of integer numbers consisting of ranks of subgroups lying in $\mathcal{F}(P)$.
\end{question}
\begin{question}
To find the minimal rank $m(P)$ of subgroups that are maximal elements of $\mathcal{F}(P)$.
\end{question}
The answer to these questions leads to  new combinatorial invariants generalizing the real Buchstaber number, which equals
to the maximum of the elements of the above (multi)set. 
The following example shows that this answer in general is nontrivial. In particular, $m(P)$ may be less than 
$s_{\mathbb R}(P)$. 

\begin{example}[An orientable manifold corresponding to a maximal subgroup of dim $<m-n$]\label{exas}
On Fig. \ref{exasfig} we present a Schlegel diagram of a $3$-dimensional associahedron $As^3$. 
\begin{figure}
\begin{center}
\includegraphics[scale=0.3]{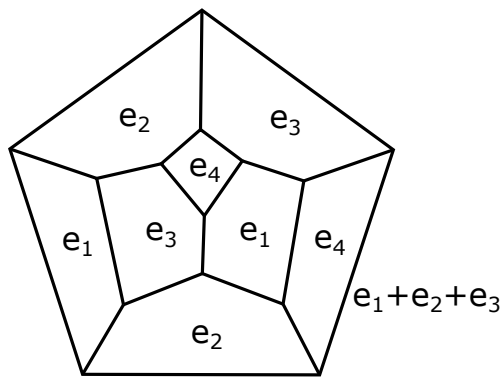}
\end{center}
\caption{The $3$-dimensional associahedron with a vector-colouring corresponding to a maximal (with respect to inclusion) 
freely acting subgroup
of dimension $m-4$}\label{exasfig}
\end{figure}
This polytope can be realized as a
cube $I^3$ with three disjoint pairwise orthogonal edges cut. A Schlegel diagram is a polytopal complex arising when we   
project the boundary complex of the  polytope to one of its facets from a point lying outside the polytope close to this facet. Also 
we present a vector-colouring $\Lambda$ of rank $4$. Proposition \ref{orprop} implies that $M(As^3,\Lambda)$ 
is an orientable manifold
glued of $16$ copies of $As^3$. Assume that there is a surjection $\Pi\colon \mathbb Z_2^4\to\mathbb Z_2^3$ such that
$\Pi\Lambda$ again satisfies condition (*). Since $\Pi(e_1)$, $\Pi(e_2)$, and $\Pi(e_3)$ arise at the same vertex of $As^3$,
these vectors form a basis in $\mathbb Z_2^3$. On the other hand, $\Pi(e_4)$ should be different from
$\Pi(e_1)$, $\Pi(e_2)$, $\Pi(e_3)$, and $\Pi(e_1)+\Pi(e_2)+\Pi(e_3)$, since these vectors correspond to facets adjacent
to  quadrangles corresponding to $\Pi(e_4)$. Also $\Pi(e_4)$ is different from $\Pi(e_1)+\Pi(e_2)$, $\Pi(e_2)+\Pi(e_3)$, and
$\Pi(e_3)+\Pi(e_1)$, since these vectors are sums of vectors corresponding to facets having a common vertex with these quadrangles. Also $\Pi(e_4)\ne 0$ by (*). Thus, there is no vector in $\mathbb Z_2^3$ suitable for being the image of $e_4$.
Therefore, $H(\Lambda)$ is a maximal freely acting subgroup of $\mathbb Z_2^m$ for $As^3$. 
\end{example}

\begin{remark}
For a simple $3$-polytope $P$ the maximal elements in $\mathcal{F}(P)$ are important due to the following fact 
(see \cite[Theorem 1.9.3]{AFW15}). {\it Let $N$ be a compact closed orientable irreducible $3$-manifold, 
$p\colon \widehat{N}\to N$ be a finite cover, and $S_1,\dots S_m$ be tori and Klein bottles from the geometric decomposition of $N$ from Theorem \ref{GDT}. Then $\widehat{N}$ is irreducible and the connected components of $p^{-1}(S_1\sqcup\dots\sqcup S_m)$ give the geometric decomposition for $\widehat{N}$.} In our case each manifold $M(P,\Lambda)$ is covered by $\mathbb R\mathcal{Z}_P$ and covers $M(P,\Lambda')$, where $H(\Lambda')$ is a maximal element in $\mathcal{F}(P)$.
Actually, we will not use the latter covering, since the case of $\mathbb R\mathcal{Z}_P$ is easier, and it is more convenient
to study the covering $\mathbb R\mathcal{Z}_P\to M(P,\Lambda)$. 
\end{remark}
\begin{proposition}
Let $\Lambda_{i_1}$, $\dots$, $\Lambda_{i_r}$ be a basis in $\mathbb Z_2^r$ for a vector-colouring $\Lambda$ of rank $r$
of a simple $n$-polytope $P$. The subgroup $H(\Lambda)$ corresponds to a maximal element in $\mathcal{F}(P)$ if and only if 
for any $j=1,\dots,r$, and any mapping $\Pi_{j,a}\colon \mathbb Z_2^r\to \mathbb Z_2^{r-1}$  defined as 
$$
\Lambda_{i_1}\to e_1,\dots,\Lambda_{i_{j-1}}\to e_{j-1},\Lambda_{i_{j+1}}\to e_j,\dots, \Lambda_{i_r}\to e_{r-1},
\Lambda_{i_j}\to a_1e_1+\dots+a_{r-1} e_{r-1},
$$ 
where $a=(a_1,\dots, a_{r-1})\in \mathbb Z_2^{r-1}$, the composition $\Pi\circ \Lambda$ does not satisfy condition (*).
\end{proposition}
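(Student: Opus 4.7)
The plan is to reduce maximality of $H(\Lambda)$ in $\mathcal{F}(P)$ to the absence of a single--step enlargement (a freely acting subgroup strictly containing $H(\Lambda)$ with index $2$), and then to parameterise all such one--step candidates by the maps $\Pi_{j,a}$ listed in the statement.

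First I would invoke the preceding proposition: $H(\Lambda) \subsetneq H(\Lambda')$ is equivalent to the existence of a surjection $\Pi \colon \mathbb Z_2^r \to \mathbb Z_2^{r'}$ with $r' < r$ and $\Pi \circ \Lambda = \Lambda'$. If $H(\Lambda) \subsetneq H(\Lambda'')$ for some freely acting $H(\Lambda'')$, then choosing any intermediate subgroup $H'$ with $H(\Lambda) \subsetneq H' \subseteq H(\Lambda'')$ of index $2$ over $H(\Lambda)$ yields a freely acting subgroup of rank $m - r + 1$, corresponding to a vector--colouring of rank $r - 1$; condition $(*)$ is precisely the requirement that the associated subgroup act freely on $\mathbb R\mathcal{Z}_P$. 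Hence $H(\Lambda)$ is maximal if and only if no surjection $\Pi \colon \mathbb Z_2^r \to \mathbb Z_2^{r-1}$ produces a composition $\Pi \circ \Lambda$ satisfying $(*)$.

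The main step is then to show that every surjection $\Pi \colon \mathbb Z_2^r \to \mathbb Z_2^{r-1}$ equals $\psi \circ \Pi_{j,a}$ for suitable $j$, $a$ and some automorphism $\psi$ of $\mathbb Z_2^{r-1}$. Here I would argue as follows: the kernel of $\Pi$ is one--dimensional, spanned by a unique nonzero vector $v = \sum_{l=1}^{r} c_l \Lambda_{i_l}$; pick any index $j$ with $c_j = 1$. Then the $r-1$ vectors $\Pi(\Lambda_{i_l})$, $l \neq j$, form a basis of $\mathbb Z_2^{r-1}$, while $\Pi(\Lambda_{i_j}) = \sum_{l \neq j} c_l \, \Pi(\Lambda_{i_l})$. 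Letting $\psi$ be the isomorphism of $\mathbb Z_2^{r-1}$ that sends these images, in the order prescribed in the statement, to $e_1, \dots, e_{r-1}$ identifies $\psi \circ \Pi$ with $\Pi_{j,a}$, the entries of $a$ being the coefficients $c_l$, $l \neq j$, under this reindexing.

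Since condition $(*)$ is invariant under automorphisms of the target, $\Pi \circ \Lambda$ satisfies $(*)$ if and only if $\Pi_{j,a} \circ \Lambda$ does, which combined with the reduction above proves both directions. The only genuinely non--routine ingredient is the parameterisation of surjections in the preceding paragraph, a standard linear--algebra observation over $\mathbb Z_2$; I do not anticipate any essential obstacle.
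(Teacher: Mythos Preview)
Your proposal is correct and follows essentially the same route as the paper: both reduce non-maximality to the existence of a surjection $\mathbb Z_2^r\to\mathbb Z_2^{r-1}$ whose composition with $\Lambda$ still satisfies $(*)$, and then observe that any such surjection is, up to an automorphism of the target, one of the $\Pi_{j,a}$. Your linear-algebra parameterisation via the kernel vector is a slightly more explicit version of the paper's remark that $r-1$ of the images $\varphi(\Lambda_{i_l})$ form a basis, but the argument is the same.
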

\begin{proof}
If the composition $\Pi\circ \Lambda$ satisfies condition (*), then $H(\Lambda)\subset H(\Pi\circ \Lambda)$ is not maximal.
On the other hand, if $H(\Lambda)$ is not maximal in $\mathcal{F}(P)$,
then it lies in a freely acting subgroup of dimension $\dim H(\Lambda)+1$.
This corresponds to a surjection $\varphi\colon \mathbb Z_2^r\to\mathbb Z_2^{r-1}$ such that $\varphi\circ\Lambda$ satisfies condition (*). Then $r-1$ of the vectors $\varphi(\Lambda_{i_1})$, $\dots$, $\varphi (\Lambda_{i_r})$ form a basis in $\mathbb Z_2^{r-1}$, and there is an automorphism $A$ of $\mathbb Z_2^{r-1}$ such that $A\circ \varphi=\Pi_{j,a}$ for some $j,a$. Then $\Pi_{j,a}$ satisfies condition (*), which is a contradiction.
\end{proof}

\begin{construction}[An orientable double cover]\label{excor} 
Let $M(P,\Lambda)$ be a non-orientable manifold and $\Lambda_{j_1}$,
$\dots$, $\Lambda_{j_r}$ be a basis in $\mathbb Z_2^r$. Below we consider coordinates in this basis.
We have an inclusion $\mathbb Z_2^r\subset \mathbb Z_2^{r+1}$ as a subset
of elements with the last coordinate equal to zero.
Then $\Lambda_{j_1}$, $\dots$, $\Lambda_{j_r}$, $e_{r+1}=(0,\dots,0,1)$ is a basis in $\mathbb Z_2^{r+1}$, 
and we have a surjection
$\Pi\colon \mathbb Z_2^{r+1}\to\mathbb Z_2^r$ given by $\Pi(\Lambda_{j_s})=\Lambda_{j_s}$, $s=1,\dots,r$, 
$\Pi(e_{r+1})=\Lambda_{i_0}$, where $\Lambda_{i_0}$ has an even number of nonzero 
coordinates. Then ${\rm Ker}\, \Pi=\langle e_{r+1}+\Lambda_{i_0}\rangle$. Consider a mapping $\widehat{\Lambda}\colon \{F_1,\dots,F_m\}\to \mathbb Z_2^{r+1}$:
$$
\widehat{\Lambda}_k=
\begin{cases}
\Lambda_k,&\text{if $\Lambda_k$ has an odd number of coordinates},\\
\Lambda_k+(e_{r+1}+\Lambda_{i_0}),&\text{if $\Lambda_k$ has an even number of coordinates.}
\end{cases}
$$ 
\end{construction}
\begin{proposition}\label{doubleprop}
For a non-orientable manifold $M(P,\Lambda)$ the mapping $\widehat{\Lambda}$ is a vector-colouring and $M(P,\widehat{\Lambda})$ is an orientable double cover.
\end{proposition}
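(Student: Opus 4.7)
The plan is to verify three things in turn: first, that $\widehat{\Lambda}$ is a vector-colouring of rank $r+1$; second, that $H(\widehat{\Lambda})\subset H(\Lambda)$ with index two, producing a double covering $M(P,\widehat{\Lambda})\to M(P,\Lambda)$; third, that $M(P,\widehat{\Lambda})$ is orientable via the criterion of Proposition \ref{orprop}. The observation that ties these together is the identity $\Pi\circ\widehat{\Lambda}=\Lambda$, which I would check by inspection: on facets where $\Lambda_k$ has odd weight, $\widehat{\Lambda}_k=\Lambda_k$ and $\Pi$ restricts to the identity on $\Z_2^r\subset\Z_2^{r+1}$, while on facets where $\Lambda_k$ has even weight, the added vector $e_{r+1}+\Lambda_{i_0}$ lies in $\Ker\Pi$ and is killed.

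From this identity, condition (*) for $\widehat{\Lambda}$ is immediate: any nontrivial relation $\sum_s \varepsilon_s\widehat{\Lambda}_{i_s}=0$ at a vertex descends via $\Pi$ to a relation among $\Lambda_{i_1},\dots,\Lambda_{i_n}$, contradicting (*) for $\Lambda$. For the spanning claim I would invoke the non-orientability of $M(P,\Lambda)$: Proposition \ref{orprop} supplies a facet $F_{i_0}$ with $\Lambda_{i_0}$ of even weight, and for this index $\widehat{\Lambda}_{i_0}=\Lambda_{i_0}+e_{r+1}+\Lambda_{i_0}=e_{r+1}$; together with the unchanged images $\widehat{\Lambda}_{j_s}=\Lambda_{j_s}$, these span $\Z_2^{r+1}$, so $\widehat{\Lambda}$ has rank exactly $r+1$.

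The covering claim then follows from the earlier proposition comparing freely acting subgroups via surjections: $\Pi\circ\widehat{\Lambda}=\Lambda$ with $\Pi$ surjective forces $H(\widehat{\Lambda})\subset H(\Lambda)$, and the dimension count $\dim H(\Lambda)-\dim H(\widehat{\Lambda})=(m-r)-(m-r-1)=1$ identifies the cover as two-sheeted.

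The orientability check is the step I expect to demand the most care, because it relies on a parity computation in the new basis. I would apply Proposition \ref{orprop} to $\widehat{\Lambda}$ using the basis $\Lambda_{j_1},\dots,\Lambda_{j_r},e_{r+1}$ of $\Z_2^{r+1}$, which is realised by the images $\widehat{\Lambda}_{j_1},\dots,\widehat{\Lambda}_{j_r},\widehat{\Lambda}_{i_0}$. Writing $\Lambda_k=\sum_{s\in S_k}\Lambda_{j_s}$ and letting $S_0$ denote the support of $\Lambda_{i_0}$ (with $|S_0|$ even by choice), I would verify that $\widehat{\Lambda}_k=\sum_{s\in S_k}\Lambda_{j_s}$ (an odd number of summands) when $|S_k|$ is odd, and $\widehat{\Lambda}_k=\sum_{s\in S_k\triangle S_0}\Lambda_{j_s}+e_{r+1}$ (with $|S_k\triangle S_0|+1=\mathrm{even}+1=$ odd summands) when $|S_k|$ is even. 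This is precisely where the shift by $e_{r+1}+\Lambda_{i_0}$, rather than just by $e_{r+1}$, pays off: the extra $\Lambda_{i_0}$ is what restores the odd parity demanded by Proposition \ref{orprop}, which then yields orientability of $M(P,\widehat{\Lambda})$.
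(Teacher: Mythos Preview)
Your proof is correct and follows essentially the same route as the paper's, with only a minor reorganisation: you front-load the identity $\Pi\circ\widehat{\Lambda}=\Lambda$ and use it to deduce condition~(*) by descent, whereas the paper verifies (*) directly first and records $\Pi\circ\widehat{\Lambda}=\Lambda$ afterwards. The parity argument for orientability is the same in both. The one point the paper makes explicit that you leave implicit is the final identification with \emph{the} orientable double cover: the paper observes that the deck involution must reverse orientation (else $M(P,\Lambda)$ would be orientable), which pins down $M(P,\widehat{\Lambda})$ as the orientation cover rather than merely some orientable two-sheeted cover; this is standard once connectedness is known, but worth a sentence.
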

\begin{proof}
Let us prove that $\widehat{\Lambda}$ satisfies condition (*). Indeed, if $F_{i_1}\cap\dots\cap F_{i_n}$ is a vertex, then
$\Lambda_{i_1}$, $\dots$, $\Lambda_{i_n}$, $e_{r+1}+\Lambda_{i_0}$ are linearly independent. Assume that 
there is a nontrivial linear dependence  $\mu_1\widehat{\Lambda}_{i_1}+\dots+\mu_n\widehat{\Lambda}_{i_n}=0$.
We have
$$
\mu_1\widehat{\Lambda}_{i_1}+\dots+\mu_n\widehat{\Lambda}_{i_n}=\mu_1\Lambda_{i_1}+\dots+\mu_n \Lambda_{i_n}+
(\mu_{i_{p_1}}+\dots+\mu_{i_{p_l}})(e_{r+1}+\Lambda_{i_0}),
$$
where $p_1$, $\dots$, $p_l\in\{i_1,\dots, i_n\}$ are subscripts of vectors with even numbers of nonzero coordinates. In particular,
$\mu_1=\dots=\mu_n=0$, which is a contradiction. 

We claim that in the basis $\widehat{\Lambda}_{j_1}=\Lambda_{j_1}$, $\dots$, $\widehat{\Lambda}_{j_r}=\Lambda_{j_r}$, 
$\widehat{\Lambda}_{i_0}=e_{r+1}$ each vector $\widehat{\Lambda}_i$
has an odd number of nonzero coordinates. Indeed, if $\Lambda_i$ has an odd number of nonzero coordinates in the basis
$\Lambda_{j_1}$, $\dots$, $\Lambda_{j_r}$, then $\widehat{\Lambda}_i=\Lambda_i$ and and the claim is proved for this vector.
If $\Lambda_i$ has an even number of nonzero coordinates in the basis
$\Lambda_{j_1}$, $\dots$, $\Lambda_{j_r}$, then 
$$
\widehat{\Lambda}_i=\Lambda_i+(e_{r+1}+\Lambda_{i_0})=(\Lambda_i+\Lambda_{i_0})+e_{r+1}
$$
This vector has an even  number of nonzero coordinates corresponding to the first $r$ basis vectors and one -- to $e_{r+1}$.
The claim is proved.  Thus, $M(P,\widehat{\Lambda})$ is an orientable manifold.

Also $\Pi(\widehat{\Lambda}_i)=\Lambda_i$ for all $i$ by construction. Thus, $H(\widehat{\Lambda})\subset H(\Lambda)$ is a subgroup of index $2$, and $M(P,\Lambda)$ is a factor space of a freely acting involution on $M(P,\widehat{\Lambda})$.
This involution changes the orientation. For otherwise, $M(P,\Lambda)$ is orientable.  For each polytope in the subdivision 
of $M(P,\Lambda)$ the manifold $M(P,\widehat{\Lambda})$ has two polytopes with opposite orientations mapping to it under
covering mapping. This is exactly the construction of the orientable double cover. 
\end{proof}

\section{Belts and operations on polytopes}\label{Sec:bo}
\subsection{Belts and families of polytopes}\label{bfam}
\begin{definition}
A {\it $k$-belt} is a cyclic sequence of $k$ facets with the property that facets are adjacent if and only if they follow each other, and no three facets have a common vertex.  A $k$-belt is {\it trivial}, if it surrounds a facet. 
\end{definition}
By the piecewise-linear Jordan curve and Schoenflies theorems each piecewise-linear simple closed curve $\gamma$ 
on the surface  of a $3$-polytope $P$ divides the sphere $\partial P$ into two connected components 
(that is, $\partial P\setminus\gamma=C_1\sqcup C_2$) such that for each component $C_i$ its boundary is $\gamma$, 
and there is a piecewise-linear homeomorphism of the closure $\overline{C_i}$ 
to some polygon mapping $\gamma$ to its boundary. 

For each $k$-belt $\mathcal{B}$ the set $|\mathcal{B}|=\cup_{F_i\in \mathcal{B}}F_i$ is topologically a cylinder
bounded by two simple edge-cycles bounding topological disks outside the belt.  

\begin{definition}
Let us associate to any $k$-belt $\mathcal{B}$ of a simple $3$-polytope $P$ a piecewise-linear simple closed 
curve $\gamma(\mathcal{B})\subset{\rm int}|\mathcal{B}|\subset\partial P$ 
consisting of segments connecting midpoints of edges
of intersection of each facet of the belt with adjacent facets of the belt. We will call $\gamma(\mathcal{B})$ a {\it middle line} of 
$\mathcal{B}$.
\end{definition}

\begin{definition}
A simple polytope $P$ is called {\it flag}, if any set of its pairwise intersecting facets $F_{i_1},\dots,F_{i_k}$ has a 
non-empty intersection $F_{i_1}\cap\dots\cap F_{i_k}\ne \varnothing$. 
\end{definition}

It can be shown that a simple $3$-polytope is flag if and only if it is different from the simplex $\Delta^3$ and 
has no $3$-belts. Also it can be shown that any flag simple $3$-polytope has $m\geqslant6$ facets, and if it has $6$ facets, then
it is combinatorially equivalent to the cube $I^3$.

Results by  A.V.~Pogorelov  \cite{P67} and E.M.~Andreev \cite{A70a} imply that a simple $3$-polytope $P$
can be realized in the Lobachevsky (hyperbolic) space $\mathbb H^3$ as a bounded polytope with right dihedral angles
if and only if $P$ is different from the simplex $\Delta^3$ and has no $3$- and $4$-belts. Moreover, a realization is unique up to 
isometries. Such polytopes are called
{\it Pogorelov polytopes}. As mentioned in \cite{DJ91} small covers over bounded right-angled polytopes
have a natural hyperbolic structure. The same manifolds were discovered by A.D.\,Mednykh and Yu.\,Vesnin \cite{MV86,V87} (see Construction \ref{MVconstr}) as generalizations of a manifold invented by F.~L\"obell.
Results by G.D.~Birkhoff \cite{B1913} imply that the $4$-colour problem 
can be reduced to colouring facets of Pogorelov polytopes with only trivial $5$-belts.

\begin{corollary}
The four colour theorem is equivalent to the fact that any bounded right-angled hyperbolic $3$-polytope with
only trivial $5$-belts admits an orientable small cover.
\end{corollary}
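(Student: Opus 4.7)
The plan is to combine three ingredients already assembled in the paper: (i) the equivalence between bounded right-angled hyperbolic $3$-polytopes and Pogorelov polytopes (Pogorelov--Andreev), (ii) Birkhoff's reduction of the four colour problem to $4$-colourings of facets of Pogorelov polytopes whose only $5$-belts are trivial, and (iii) the corollary of \cite{NN05} (stated just before Proposition \ref{orprop}'s corollary in the excerpt) which says that a $3$-dimensional small cover $M(P,\Lambda)$ is orientable if and only if its characteristic function corresponds to a $4$-colouring of the facets of $P$. The proof is then essentially a transport of hypotheses back and forth along (iii), using (i) to identify the class of polytopes on the two sides and (ii) to know that this class suffices for the four colour theorem.

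For the forward direction I would assume the four colour theorem. Let $P$ be any bounded right-angled hyperbolic $3$-polytope with only trivial $5$-belts. The facet adjacency structure of $P$ is that of a planar graph (the $1$-skeleton of $\partial P^{*}$ dual to faces), so the four colour theorem supplies a colouring $c\colon\{F_1,\dots,F_m\}\to\{1,2,3,4\}$ in which adjacent facets receive distinct colours. As recalled after Proposition \ref{orprop}, setting $\Lambda_i=e_{c(i)}$ for $c(i)\in\{1,2,3\}$ and $\Lambda_i=e_1+e_2+e_3$ for $c(i)=4$ yields a characteristic function (any three of these four vectors are linearly independent, so condition (*) holds at every vertex), and every $\Lambda_i$ is an odd sum of basis vectors, so by Proposition \ref{orprop} the resulting small cover $M(P,\Lambda)$ is orientable.

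For the converse I would assume that every bounded right-angled hyperbolic $3$-polytope with only trivial $5$-belts admits an orientable small cover. By the Pogorelov--Andreev theorem recalled in the excerpt, this class coincides precisely with the class of Pogorelov polytopes whose $5$-belts are all trivial. The corollary of \cite{NN05} then converts the assumed orientable small cover into a $4$-colouring of the facets of every such polytope. Birkhoff's theorem \cite{B1913}, also cited just above the statement, reduces the general four colour problem to exactly this class of polytopes, and therefore the four colour theorem follows.

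There is no real obstacle here beyond correctly matching the hypotheses: the statement is essentially a triangle of known equivalences, and the only subtlety is to verify that ``admits an orientable small cover'' is the same as ``admits a facet $4$-colouring'' (the content of the \cite{NN05} corollary via Proposition \ref{orprop}), and that the geometric condition ``bounded right-angled hyperbolic with only trivial $5$-belts'' is the same as the combinatorial condition used in Birkhoff's reduction. Both identifications have been established in the preceding text, so the equivalence follows by chaining them.
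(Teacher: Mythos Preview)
Your proof is correct and matches the paper's intended argument, which is left implicit: the corollary is stated without proof as an immediate consequence of the preceding remarks on Birkhoff's reduction and the earlier \cite{NN05} corollary. You have correctly unpacked both directions, and the only thing worth adding is that in the forward direction the hypothesis ``only trivial $5$-belts'' plays no role (the four colour theorem gives a facet $4$-colouring of \emph{any} simple $3$-polytope), while in the converse it is precisely what makes Birkhoff's reduction applicable.
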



In this paper an important role is played by the family of {\it almost Pogorelov polytopes}. It consists of simple $3$-polytopes 
$P\ne \Delta^3$ without $3$-belts such that any $4$-belt is trivial. The combinatorics and hyperbolic geometry of this
family of polytopes was studied in \cite{E19}.
 
The following fact can be extracted directly from the
definition. 
\begin{proposition}
The cube $I^3$ (the $4$-prism) and the $5$-prism are the only almost Pogorelov polytopes with adjacent quadrangles.
All the other almost Pogorelov polytopes have no adjacent quadrangles and $m\geqslant 9$ facets.
\end{proposition}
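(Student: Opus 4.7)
Proof plan. I set $F_1 \cap F_2 = e$ with endpoints $A, B$, let $C_1, C_2$ be the third facets at $A$ and $B$, and let $D_1, D_2$ be the facets of $P$ meeting $F_1$ and $F_2$ along the edges opposite to $e$. The almost-Pogorelov hypothesis already yields several restrictions: none of $C_1, C_2, D_1, D_2$ can be a triangle (a triangular $C_1$ with third vertex $v$ puts the third facet at $v$ together with $F_1, F_2$ into a 3-belt; a triangular $D_1$ forces $C_1$ and $C_2$ to share an edge through its apex vertex, producing the 3-belt $\{F_1, C_1, C_2\}$), and $C_1$ and $C_2$ are non-adjacent (else $\{F_1, C_1, C_2\}$ is directly a 3-belt).

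The central analysis runs through the 4-cycle $\mathcal{B} = (C_1, D_2, C_2, D_1)$ in the facet-adjacency graph, split on whether $D_1$ is adjacent to $D_2$. In Case I ($D_1$ adjacent to $D_2$), the edge $D_1 \cap D_2$ has two endpoints $X, X'$. If some $C_i$ is a quadrangle then its fourth vertex must coincide with one of these endpoints; when both $C_1, C_2$ are quadrangles a direct combinatorial check forces all six facets $F_1, F_2, C_1, C_2, D_1, D_2$ to be quadrangles arranged as the cube $I^3$. If only $C_1$ is a quadrangle and $C_2$ has at least five edges, then the three pairwise intersections $D_1 \cap D_2, D_1 \cap C_2, D_2 \cap C_2$ sit at pairwise-disjoint vertex-pairs (since both endpoints of $D_1 \cap D_2$ lie off $C_2$), so $\{D_1, D_2, C_2\}$ is a 3-belt, a contradiction. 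The remaining sub-case (both $C_1, C_2$ at least pentagons, still with $D_1 \cap D_2$ an edge) is eliminated identically by showing that $\{D_1, D_2, C_1\}$ is a 3-belt, since its three pairwise-intersection edges lie at pairwise-disjoint vertex-pairs.

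In Case II ($D_1$ not adjacent to $D_2$) the cycle $\mathcal{B}$ is chordless, and a quick check shows any quadrangular $C_i$ would force three facets of $\mathcal{B}$ to share the fourth vertex of $C_i$, which would then have to lie in $D_1 \cap D_2$, contradicting the sub-case assumption. So both $C_1, C_2$ have at least five edges, $\mathcal{B}$ is a genuine 4-belt, and by hypothesis it is trivial. One side of $\mathcal{B}$ contains the region $F_1 \cup F_2$ bounded by the six-edge cycle $a_1, d_1, b_1, b_2, d_2, a_2$, which is a two-facet region and therefore not a single facet. Hence the other side is a single facet $H$; tracing the bounding cycle of that side through the four belt facets gives contributions of $|C_1|-4$, $|D_2|-3$, $|C_2|-4$, $|D_1|-3$ edges, and since in a simple $3$-polytope $H$ shares at most one edge with any other facet, each of these four quantities equals $1$. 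Thus $|C_1|=|C_2|=5$, $|D_1|=|D_2|=|H|=4$, and the resulting seven facets assemble into the combinatorial pentagonal prism.

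For the concluding assertion, the absence of adjacent quadrangles in every other almost-Pogorelov polytope is the contrapositive of the first part of the statement, and the bound $m \geq 9$ follows from a short Euler-type check using the relation $\sum_k (6-k)f_k = 12$: the simple $3$-polytopes with $m \leq 8$ form a short finite list (the simplex $\Delta^3$, the $3$-prism, the cube, the $5$-prism, the hexagonal prism, and a small family of polytopes with triangular facets), and each is either already named above, contains a $3$-belt, or, in the case of the hexagonal prism, carries the non-trivial $4$-belt consisting of two opposite side-quadrangles together with both hexagons. I expect the delicate step to be the end of Case II: one must track intermediate vertices around $C_1, C_2, D_1, D_2$ with care to ensure that the bounding cycle of the other side has exactly the claimed length and that the simple-polytope constraint really forces every $|C_i|-4$ and $|D_i|-3$ to equal $1$.
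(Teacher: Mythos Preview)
The paper states this proposition without proof, asserting it ``can be extracted directly from the definition.'' So there is no paper argument to compare against, and your case analysis for the first assertion is doing real work the paper skips. That part is essentially correct: in Case~I your sub-case reasoning works, though the sentence ``since both endpoints of $D_1\cap D_2$ lie off $C_2$'' is phrased backwards. The actual argument is that \emph{if} $D_1,D_2,C_2$ shared a vertex, that vertex would be the second endpoint of both $C_2\cap D_1$ and $C_2\cap D_2$, forcing $C_2$ to close up as a quadrangle; hence when $|C_2|\geqslant 5$ the triple $(D_1,D_2,C_2)$ is a $3$-belt, contradicting flagness. Case~II is fine.

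The genuine gap is in the $m\geqslant 9$ argument. Your claimed ``short finite list'' is wrong: there exist flag simple $3$-polytopes with $m=8$ that are not the hexagonal prism and have no triangular facets. For instance, truncating the edge $P_1\cap Q_1$ of the pentagonal prism (with bases $P_1,P_2$ and sides $Q_1,\dots,Q_5$) yields a flag polytope with four quadrangles and four pentagons. Your enumeration therefore does not cover all cases, and the first part of your proof does not automatically dispose of such examples, since you would still need to exhibit a nontrivial $4$-belt in each.

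A cleaner route avoids enumeration entirely. Let $P$ be almost Pogorelov without adjacent quadrangles. If $p_4=0$ then $p_5\geqslant 12$ from $\sum_k(6-k)p_k=12$, so $m\geqslant 12$. Otherwise pick a quadrangle $F$ with neighbours $G_1,G_2,G_3,G_4$ (cyclically); none of the $G_i$ is a quadrangle. For each $i$ let $H_i$ be the third facet at the vertex of $G_i\cap G_{i+1}$ away from $F$. Each $H_i$ lies outside $\{F,G_1,\dots,G_4\}$. If $H_i=H_j$ for $i\ne j$, then $(F,G_i,H_i,G_{j+1})$ (with indices arranged so the cycle closes) is a $4$-belt whose trivial side would have to be a single $G_k$, forcing that $G_k$ to be a quadrangle. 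Hence $F,G_1,\dots,G_4,H_1,\dots,H_4$ are nine distinct facets.

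\textbf{}
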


Results by E.~M.~Andreev \cite{A70a, A70b} imply that almost Pogorelov 
polytopes correspond to right-angled  polytopes of finite volume in  $\mathbb H^3$ 
(right-angled means that all dihedral angles between facets intersecting by edges, perhaps with vertices on the absolute,  
are $\frac{\pi}{2}$). 
Such polytopes may have $4$-valent vertices on the absolute, while all proper vertices have valency $3$.
Two right-angled polytopes of finite volume in $\mathbb H^3$ are congruent if and only if they are combinatorially equivalent
(see \cite[Ch. 5, Sect. 2.1]{VS88}). 
\begin{proposition}\cite[Theorem 10.3.1]{DO01} (see also \cite[Theorem 6.5]{E19})
Cutting off $4$-valent vertices defines a bijection between classes of congruence of right-angled
polytopes of finite volume in $\mathbb{H}^3$ and almost Pogorelov polytopes without adjacent quadrangles. Moreover, it induces a bijection between the ideal vertices of the right-angled polytope and the 
quadrangles of the corresponding almost Pogorelov polytope.
\end{proposition}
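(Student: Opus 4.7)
The plan is to reduce the bijection to Andreev's theorem for finite-volume right-angled polytopes in $\mathbb{H}^3$, using the standard local analysis of ideal vertices. At any vertex of a right-angled polytope, the link is a spherical (proper vertex) or Euclidean (ideal vertex) polygon all of whose angles equal $\tfrac{\pi}{2}$. Hence proper vertices are $3$-valent while ideal vertices are $4$-valent, and truncating an ideal vertex by a horosphere orthogonal to the four incident facets preserves right-angledness and replaces the ideal vertex by a quadrangular facet.

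For the forward direction, let $P\subset\mathbb{H}^3$ be a finite-volume right-angled polytope and let $P'$ be obtained by cutting off each ideal vertex. Then $P'$ is simple and carries a distinguished collection of quadrangular facets, one per former ideal vertex. I would verify:
(i) $P'$ has no $3$-belts: a $3$-belt avoiding all new quadrangles is a $3$-belt of $P$, forbidden by Andreev; a $3$-belt using a new quadrangle would force a local configuration incompatible with the $4$-valent ideal vertex.
(ii) Every $4$-belt of $P'$ is trivial: the belts around the new quadrangles are trivial by construction, and any other $4$-belt of $P'$ descends to a $4$-belt of $P$, which by Andreev's classification of $4$-belts in finite-volume right-angled polytopes can only arise as the link of an ideal vertex and therefore still surrounds a new quadrangle of $P'$.
(iii) No two new quadrangles of $P'$ are adjacent: adjacency would translate to an edge of $P$ with both endpoints ideal, ruled out by Andreev's inequality constraints for finite-volume right-angled polytopes.

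For the reverse direction, given an almost Pogorelov polytope $Q$ with no pair of adjacent quadrangles, I contract each quadrangular facet to a single new vertex, obtaining an abstract simple polytope $P$ with a distinguished set of $4$-valent vertices. The no-adjacent-quadrangles hypothesis ensures that the contractions do not create multiple edges or identify distinct facets, and the belts of $P$ can be read off from those of $Q$: the $3$-belts of $P$ correspond to $3$-belts of $Q$ (none) or to degenerations that would force a pair of adjacent quadrangles in $Q$ (none), while the $4$-belts of $P$ that are not links of new $4$-valent vertices correspond to nontrivial $4$-belts of $Q$ (none by hypothesis). Applying Andreev's theorem for finite-volume right-angled polytopes realizes $P$ as such a polytope in $\mathbb{H}^3$ with the new $4$-valent vertices sitting on the absolute, and uniqueness up to isometry is the theorem of Vinberg-Shvartsman cited in \cite[Ch.~5, Sect.~2.1]{VS88}. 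Since truncation and contraction are mutually inverse combinatorial operations that match ideal vertices with quadrangular facets tautologically, the two maps are inverse bijections.

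The main obstacle is the precise matching of combinatorial conditions on both sides: on the hyperbolic side one must invoke the full strength of Andreev's theorem to rule out pathological $3$-belts, nontrivial $4$-belts, and edges joining two ideal vertices, all while controlling the finite-volume hypothesis; on the combinatorial side one must verify that the contraction of quadrangles of $Q$ does not create new $3$- or $4$-belts beyond the links of the new $4$-valent vertices. Both verifications are essentially case analyses using the definitions of belts together with Andreev's inequalities, and constitute the technical core of the argument.
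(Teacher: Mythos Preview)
The paper does not give its own proof of this proposition: it is stated with the citations \cite[Theorem 10.3.1]{DO01} and \cite[Theorem 6.5]{E19} and is used as background. There is therefore no ``paper's proof'' to compare against; your sketch is essentially the argument one finds in the cited sources, where Andreev's theorems \cite{A70a,A70b} are combined with the combinatorial belt analysis and the rigidity statement from \cite[Ch.~5, Sect.~2.1]{VS88}.

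Your outline is sound, with one terminological slip and one point that deserves more care. The slip: in the reverse direction you write ``obtaining an abstract simple polytope $P$ with a distinguished set of $4$-valent vertices''; a polytope with $4$-valent vertices is by definition not simple, and indeed Andreev's theorem for finite-volume polytopes is precisely a statement about such non-simple combinatorial types. The point needing care: in step (ii) of the forward direction you assert that any $4$-belt of $P'$ not encircling a new quadrangle ``descends to a $4$-belt of $P$'', but a $4$-belt of $P'$ could in principle contain one or two of the new quadrangular facets without surrounding them, and you must argue separately that this is impossible (this is where the absence of adjacent quadrangles and the Andreev constraints on ideal-vertex adjacencies do the work). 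The cited proofs handle exactly these cases; your plan is on the right track but the case analysis you flag as ``the technical core'' is genuinely where the content lies.
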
 
The $3$-dimensional associahedron $As^3$ is a unique almost Pogorelov polytope with $m=9$. 
It corresponds to a right-angled $3$-gonal bipyramid. The bipyramid has 
two proper vertices of valency $3$ and three ideal vertices of valency $4$. 

For recent results on volumes of right-angled hyperbolic polytopes see \cite{VE20, EV20b,EV20c}. 
In \cite{LW20} the right-angled hyperbolicity of handlebodies with simple facial structure is studied.

\subsection{Nested families of belts and curves}
In this subsection we develop a technique allowing to work with families of belts. 
\begin{definition}
Let $P$ be a simple $3$-polytope. We 
call two belts $\mathcal{B}_1$ and $\mathcal{B}_2$ {\it compatible}, if the middle line $\gamma(\mathcal{B}_2)$ 
lies in the closure of a connected component of $\partial P\setminus\gamma(\mathcal{B}_1)$. 
\end{definition}
\begin{remark}
This definition can be equivalently reformulated in terms of dual simplicial polytope $P^*$.
A belt $\mathcal{B}$ of $P$ corresponds to the chordless cycle $c(\mathcal{B})$ in $P^*$ that does not bound a triangle.
Belts $\mathcal{B}_1$ and $\mathcal{B}_2$ are  compatible if and only if $c(\mathcal{B}_2)$ lies in the closure 
of a connected component of $\partial P^*\setminus c(\mathcal{B}_1)$. Since $P$ and $P^*$ 
have combinatorially equivalent barycentric subdivisions, $\partial P^*$ can be combinatorially realized 
in the barycentric subdivision of $\partial P$: the vertex $\{i\}$ of $P^*$ corresponds to the barycentre of the facet $F_i$ of $P$, 
the edge $\{i,j\}$ -- to the curve consisting of two segments connecting the centre of the edge $F_i\cap F_j$ with $\{i\}$ and $\{j\}$,
and the triangle $\{i,j,k\}$ -- to the area consisting of $6$ triangles. 
Then $\gamma(\mathcal{B})$ is isotopic to $c(\mathcal{B})$ in this realization.
\end{remark}
\begin{lemma}
Two belts $\mathcal{B}_1$ and $\mathcal{B}_2$  are compatible if and only if a component of $\partial P\setminus\gamma(\mathcal{B}_2)$ lies in a component of  $\partial P\setminus\gamma(\mathcal{B}_1)$.
\end{lemma}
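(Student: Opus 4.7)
The plan is to apply the Jordan--Schoenflies theorem (as invoked at the start of Subsection 3.1) to each middle line separately. Write $\partial P\setminus\gamma(\mathcal B_1)=C_1\sqcup C_1'$ and $\partial P\setminus\gamma(\mathcal B_2)=D_2\sqcup D_2'$, with each of the four pieces an open topological disk whose topological boundary equals the corresponding middle line; in particular $\overline{C_1}=C_1\cup\gamma(\mathcal B_1)$ and $\overline{D_2}=D_2\cup\gamma(\mathcal B_2)$. Once the problem is set up this way the lemma becomes a purely point-set statement on the two-sphere $\partial P$.

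For the direction $(\Leftarrow)$ the argument is immediate: if, say, $D_2\subset C_1$, then $\overline{D_2}\subset\overline{C_1}$, so $\gamma(\mathcal B_2)=\overline{D_2}\setminus D_2\subset\overline{C_1}$, which is precisely the definition of compatibility.

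For $(\Rightarrow)$, suppose $\gamma(\mathcal B_2)\subset\overline{C_1}$. The open set $C_1'$ is disjoint from $\overline{C_1}$ and hence from $\gamma(\mathcal B_2)$, so $C_1'$ sits in $D_2\sqcup D_2'$; connectedness gives $C_1'\subset D_2'$ after relabelling. The crucial move is to turn this ``wrong-way'' inclusion into one of the shape the lemma requires. Passing to closures, $\gamma(\mathcal B_1)\subset\overline{C_1'}\subset\overline{D_2'}=D_2'\cup\gamma(\mathcal B_2)$; intersecting with $D_2$, which is disjoint from both $D_2'$ and $\gamma(\mathcal B_2)$, gives $\gamma(\mathcal B_1)\cap D_2=\varnothing$, hence $D_2\subset C_1\sqcup C_1'$. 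Connectedness of $D_2$ together with $D_2\cap C_1'\subset D_2\cap D_2'=\varnothing$ then forces $D_2\subset C_1$, which is the desired inclusion.

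The only real subtlety, and the step I would flag as the main obstacle, is the dualisation in $(\Rightarrow)$: the hypothesis naturally produces $C_1'\subset D_2'$, which is the opposite of the inclusion we need, and one has to close up and pass to the complementary disks to extract $D_2\subset C_1$. Everything else is routine bookkeeping with Jordan's theorem on the sphere.
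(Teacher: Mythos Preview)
Your proof is correct and follows essentially the same route as the paper's: in both arguments one observes that the ``other'' component $C_1'$ (the paper's $C_2$) misses $\gamma(\mathcal B_2)$ and hence lies in one $D$-component, then passes to closures to conclude $\gamma(\mathcal B_1)$ misses the remaining $D$-component, forcing that component into $C_1$. The ``dualisation'' step you flag as the main subtlety is exactly the step the paper compresses into the single line ``$D_1\cap\overline{C_2}=\varnothing$; in particular, $D_1\subset C_1$''.
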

\begin{proof}
Let $\gamma_1=\gamma(\mathcal{B}_1)$ divide $\partial P$ into components $C_1$ and $C_2$, 
$\gamma_2=\gamma(\mathcal{B}_2)$ -- into $D_1$ and $D_2$, and $\gamma_2\subset\overline{C_1}$. 
The component $C_2$ is path-connected and  does not intersect $\gamma_2$, therefore $C_2$ lies in $D_1$ or $D_2$, 
say in $D_2$. Then $D_1\cap \overline{C_2}=\varnothing$. In particular, $D_1\subset C_1$.

On the other hand, if $D_1\subset C_1$, then $\gamma_2=\partial D_1\subset \overline{C_1}$.
\end{proof}

Following the notations of the proof, if $D_1\subset C_1$, then $C_2\cap \overline {D_1}=\varnothing$. In particular,
$C_2\subset D_2$. Thus, the definition is symmetric with respect to $\gamma_1$ and $\gamma_2$.

\begin{lemma}\label{ncb-lemma}
If the belts $\mathcal{B}_1$ and $\mathcal{B}_2$ are not compatible, then each of them contains two facets lying in the closures
of different connected components of the complement to the other belt~in~$\partial P$.
\end{lemma}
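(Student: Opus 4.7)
The plan is to reduce the statement to the previous lemma and argue by contradiction, exploiting the fact that the middle lines $\gamma_1:=\gamma(\mathcal{B}_1)$ and $\gamma_2:=\gamma(\mathcal{B}_2)$ are straight segments inside every facet they traverse. By the previous lemma, incompatibility is equivalent to saying that $\gamma_1$ meets both connected components $D_1, D_2$ of $\partial P\setminus\gamma_2$ (and symmetrically for $\gamma_2$). Any facet $F\in\mathcal{B}_1\setminus\mathcal{B}_2$ is disjoint from $\gamma_2\subset\mathrm{int}\,|\mathcal{B}_2|$ and, being connected, is contained in exactly one of $D_1, D_2$; in particular $F\subset\overline{D_1}$ or $F\subset\overline{D_2}$. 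To prove the lemma for $\mathcal{B}_1$ it therefore suffices to produce facets of $\mathcal{B}_1\setminus\mathcal{B}_2$ in each component.

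I would argue by contradiction, assuming that every facet $F\in\mathcal{B}_1\setminus\mathcal{B}_2$ lies in $D_1$; the goal is to show $\gamma_1\subset\overline{D_1}$, contradicting incompatibility. For $F\in\mathcal{B}_1\setminus\mathcal{B}_2$ this is immediate since $\gamma_1\cap F\subset F\subset D_1$. For $F\in\mathcal{B}_1\cap\mathcal{B}_2$ the piece $\gamma_1\cap F$ is a straight chord joining the midpoints of the two edges $F\cap F'$, $F\cap F''$, where $F', F''$ are the $\mathcal{B}_1$-neighbors of $F$. If $F'$ is not in $\mathcal{B}_2$ then $F'\subset D_1$ and the midpoint of $F\cap F'$ lies in $D_1$; if $F'$ is in $\mathcal{B}_2$ then, being adjacent to $F$ in $P$, it is also $\mathcal{B}_2$-adjacent to $F$ by the belt condition, so the midpoint lies on $\gamma_2$. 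In either case both endpoints of $\gamma_1\cap F$ lie in $\overline{D_1}=D_1\cup\gamma_2$.

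The final step uses the straight-line geometry: $\gamma_1\cap F$ and $\gamma_2\cap F$ are straight chords of the convex polygon $F$, so they meet in at most one point. Since both endpoints of $\gamma_1\cap F$ lie in $\overline{D_1}$, they lie on the same closed side of the chord $\gamma_2\cap F$; hence $\gamma_1\cap F$ lies entirely in $\overline{D_1}$, for if it crossed $\gamma_2\cap F$ transversally its two endpoints would fall in different open components. Combining the contributions of all facets of $\mathcal{B}_1$ yields $\gamma_1\subset\overline{D_1}$, the sought contradiction. Swapping the roles of $\mathcal{B}_1$ and $\mathcal{B}_2$ gives the symmetric conclusion for $\mathcal{B}_2$.

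The main obstacle I anticipate is the degenerate situation in which the chords $\gamma_1\cap F$ and $\gamma_2\cap F$ share an endpoint (whenever a $\mathcal{B}_1$-neighbor of $F$ happens also to be a $\mathcal{B}_2$-neighbor) or even coincide (when both unordered pairs of neighbors are equal); in these cases no transverse intersection occurs and one must read off the containment $\gamma_1\cap F\subset\overline{D_1}$ directly from the positions of the edge midpoints, but the fact that both arcs consist of straight segments between such midpoints keeps the case analysis elementary.
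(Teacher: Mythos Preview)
Your proof is correct and uses essentially the same ingredients as the paper's argument: the straight-line geometry of the middle lines within each facet together with the belt adjacency condition (adjacent facets of a belt are consecutive, so their shared edge-midpoint lies on the middle line). The only structural difference is that the paper argues directly---starting from a segment of $\gamma_2$ meeting $C_1$ and tracing it to an endpoint in $C_1$, whose adjacent $\mathcal{B}_2$-facet must then lie outside $\mathcal{B}_1$ and hence entirely in $C_1$---whereas you package the same observations as a proof by contradiction; both routes yield the slightly stronger conclusion that the two facets can be taken in $\mathcal{B}_i\setminus\mathcal{B}_{3-i}$.
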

\begin{proof}
Let $\gamma_1=\gamma(\mathcal{B}_1)$ divide $\partial P$ into connected components $C_1$ and $C_2$, and $\gamma_2=\gamma(\mathcal{B}_2)$.  If the belts are not compatible, 
then $\gamma_2$ has points in $C_1$ and $C_2$. Each segment of $\gamma_2$ connects two midpoints of non-adjacent edges 
of a facet $F_i$ of $\mathcal{B}_2$. Let such a segment $E_2$ have 
a point in $C_1$. If  $F_i$ does not belong to $\mathcal{B}_1$,
then $F_i\subset C_1$. Else, there is a segment $E_1$ of $\gamma_1$ in $F_i$. Since the segments  do not coincide
(for otherwise, $E_2\subset \gamma_1$ can not have points in $C_1$), either $E_2$ does not intersect $E_1$, or it intersects  
$E_1$ transversally, or they have a unique common vertex. In all the cases $E_2$ has a vertex in $C_1$. This vertex is a midpoint
of the edge of intersection of $F_i$ with some facet $F_j\in \mathcal{B}_2$.
The facet $F_j$ has a point in $C_1$, is adjacent to $F_i$ and is not successive to it in $\mathcal{B}_1$. Therefore, $F_j\notin \mathcal{B}_1$, and $F_j\subset C_1$. Similarly one can find a facet in $\mathcal{B}_2$ lying in $C_2$. This finishes the proof. 
\end{proof}

\begin{definition}\label{NSB}
Let us call by a {\it nested family} of belts a collection of belts such that any two of them are compatible.
\end{definition}

\begin{corollary}\label{3bnested}
For any simple $3$-polytope its $3$-belts form a nested family (possibly empty).
\end{corollary}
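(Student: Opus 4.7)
My plan is to argue by contradiction, leveraging Lemma \ref{ncb-lemma} together with the very restrictive structure of a $3$-belt: a $3$-belt is a $3$-cycle, so any two of its facets are adjacent and share an edge of $P$. This rigidity is what makes the statement work for $3$-belts specifically.

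Suppose two $3$-belts $\mathcal{B}_1, \mathcal{B}_2$ of $P$ were not compatible, and let $C_1, C_2$ denote the two connected components of $\partial P \setminus \gamma(\mathcal{B}_1)$. Applying Lemma \ref{ncb-lemma} I would obtain facets $F_a, F_b \in \mathcal{B}_2$ with $F_a \subset \overline{C_1}$ and $F_b \subset \overline{C_2}$. The first thing to verify is that neither $F_a$ nor $F_b$ lies in $\mathcal{B}_1$: any facet $F \in \mathcal{B}_1$ contains in its interior the middle-line segment $\gamma(\mathcal{B}_1) \cap F$ connecting midpoints of two of its edges, and the two complementary pieces of $F$ meet the distinct open sets $C_1$ and $C_2$, so $F$ cannot be contained in $\overline{C_1}$ alone (nor in $\overline{C_2}$ alone). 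Thus $F_a, F_b \notin \mathcal{B}_1$.

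Since $\mathcal{B}_2$ consists of only three facets arranged cyclically, $F_a$ and $F_b$ are adjacent in $\mathcal{B}_2$, hence share an edge $e = F_a \cap F_b$ of $P$. Combining $F_a \subset \overline{C_1}$ and $F_b \subset \overline{C_2}$ gives
\[
e \;\subset\; \overline{C_1} \cap \overline{C_2} \;=\; \gamma(\mathcal{B}_1).
\]
However, $\gamma(\mathcal{B}_1)$ meets the $1$-skeleton of $P$ only at midpoints of edges shared by two consecutive facets of $\mathcal{B}_1$; as neither $F_a$ nor $F_b$ is in $\mathcal{B}_1$, the edge $e$ is not such an edge, so $e \cap \gamma(\mathcal{B}_1) = \varnothing$. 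Since $e$ is nonempty, this is the desired contradiction.

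I expect the main (and really only) obstacle to be a small amount of topological bookkeeping: identifying $\overline{C_1} \cap \overline{C_2}$ with $\gamma(\mathcal{B}_1)$ via the piecewise-linear Jordan--Schoenflies theorem invoked at the start of Section \ref{Sec:bo}, and justifying that a facet of $\mathcal{B}_1$ cannot lie entirely in just one of the closed components. Once these points are settled, the combinatorial rigidity of the $3$-cycle does all the work.
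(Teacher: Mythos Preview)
Your proof is correct and follows essentially the same approach as the paper: invoke Lemma~\ref{ncb-lemma} and exploit that any two facets of a $3$-belt are adjacent. The only cosmetic difference is that the paper concludes directly that the two facets, lying in distinct (open) components, must be disjoint---whereas you pass through the intermediate observation $e\subset\overline{C_1}\cap\overline{C_2}=\gamma(\mathcal{B}_1)$; your extra verification that $F_a,F_b\notin\mathcal{B}_1$ makes explicit a point the paper leaves implicit (it is in fact established inside the proof of Lemma~\ref{ncb-lemma}).
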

\begin{proof}
If the $3$-belts $\mathcal{B}_1$ and $\mathcal{B}_2$ are not compatible, then 
$\mathcal{B}_1$ has two facets in different connected components of $\partial P\setminus\mathcal{B}_2$.
Hence, these facets do not intersect. A contradiction.
\end{proof} 

\begin{definition}\label{def:nsc}
Let us call by a {\it nested family of curves} a family of piecewise linear simple curves $\widetilde{\gamma}(\mathcal{B})$  bijectively corresponding to the belts $\mathcal{B}$ from a nested family and satisfying the following conditions:
\begin{enumerate}
\item segments of each curve $\widetilde{\gamma}(\mathcal{B})$ bijectively correspond to facets 
of the belt $\mathcal{B}$ and each segment connects two points in the relative interiors of the edges of intersection of the 
corresponding facet with the adjacent facets of the belt;
\item the curves are pairwise disjoint;
\end{enumerate}
\end{definition}

For any vertex $v$ of an edge $F_i\cap F_j\in P$ and a belt $\mathcal{B}$ containing $F_i$ and $F_j$,
there is a unique connected component $C_v(\mathcal{B})$ of $\partial P\setminus\gamma(\mathcal{B})$ containing $v$.

\begin{proposition}\label{vorder}
For any nested family of curves and any edge $E$ of $P$ containing both vertices $v_p\in\widetilde{\gamma}(\mathcal{B}_p)$ and 
$v_q\in\widetilde{\gamma}(\mathcal{B}_q)$ the vertex $v$ of $E$ is closer $v_p$ than to $v_q$ if and only if 
$C_v(\mathcal{B}_p)\subset C_v(\mathcal{B}_q)$.
\end{proposition}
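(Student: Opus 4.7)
The plan is to read off the nesting from the Jordan--Schoenflies decomposition of the sphere $\partial P$. Let $v'$ denote the second vertex of the edge $E$. Since $\widetilde\gamma(\mathcal B_p)$ and $\widetilde\gamma(\mathcal B_q)$ meet $E$ only in its relative interior, neither $v$ nor $v'$ lies on either curve, so $C_v(\mathcal B_p)$ and $C_v(\mathcal B_q)$ are unambiguous. Moreover, because $v_p$ lies on $E$, the edge $E$ is shared by two consecutive facets of $\mathcal B_p$, and the analogous statement holds for $\mathcal B_q$; hence $v$ is a vertex of an edge of intersection of two adjacent facets of each belt, which is precisely the setting in which $C_v(\mathcal B_p)$ and $C_v(\mathcal B_q)$ were defined. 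The key topological input is Definition~\ref{def:nsc}(2): the curves are pairwise disjoint. As a consequence $\widetilde\gamma(\mathcal B_q)$ is a connected subset of $\partial P\setminus\widetilde\gamma(\mathcal B_p)$ and therefore lies entirely in exactly one of the two components $C_v(\mathcal B_p)$ or $C_{v'}(\mathcal B_p)$, and symmetrically with $p$ and $q$ exchanged.

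For the forward direction, assume the order along $E$ is $v,v_p,v_q,v'$. The sub-arc of $E$ from $v_p$ to $v'$ crosses $\widetilde\gamma(\mathcal B_p)$ only at its endpoint $v_p$, so $v_q$ lies on the side of $\widetilde\gamma(\mathcal B_p)$ opposite to $v$, that is $v_q\in C_{v'}(\mathcal B_p)$. Since $v_q\in\widetilde\gamma(\mathcal B_q)$, the connectedness observation above yields $\widetilde\gamma(\mathcal B_q)\subset C_{v'}(\mathcal B_p)$. Then $C_v(\mathcal B_p)$ is connected and disjoint from $\widetilde\gamma(\mathcal B_q)$, so it sits inside a single component of $\partial P\setminus\widetilde\gamma(\mathcal B_q)$. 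Because $v\in C_v(\mathcal B_p)\cap C_v(\mathcal B_q)$ by definition, that component is $C_v(\mathcal B_q)$, which gives $C_v(\mathcal B_p)\subset C_v(\mathcal B_q)$.

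For the converse, I would argue by contrapositive. If the order along $E$ is instead $v,v_q,v_p,v'$, the same argument with the roles of $p$ and $q$ interchanged produces $\widetilde\gamma(\mathcal B_p)\subset C_{v'}(\mathcal B_q)$ and therefore $C_v(\mathcal B_q)\subset C_v(\mathcal B_p)$. This inclusion is strict, since the set $\widetilde\gamma(\mathcal B_q)$ is disjoint from its own complementary component $C_v(\mathcal B_q)$ but is contained in $C_v(\mathcal B_p)$ (it passes through $v_q$, which lies between $v$ and $v_p$ on $E$, hence on the $v$-side of $\widetilde\gamma(\mathcal B_p)$). A strict inclusion $C_v(\mathcal B_q)\subsetneq C_v(\mathcal B_p)$ rules out the reverse containment $C_v(\mathcal B_p)\subset C_v(\mathcal B_q)$, which completes the equivalence.

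The argument is essentially bookkeeping rather than deep topology; the only subtlety, and the place where one must be careful, is tracking which of the two components of $\partial P\setminus\widetilde\gamma(\mathcal B_p)$ contains $v_q$ (and symmetrically). That is forced by the linear order of the four distinct points $v,v_p,v_q,v'$ along $E$, and once this is fixed everything else follows mechanically from the Jordan curve theorem and the disjointness of the curves in a nested family.
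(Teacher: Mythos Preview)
There is a genuine gap. In the paper, $C_v(\mathcal{B})$ is defined as the connected component of $\partial P\setminus\gamma(\mathcal{B})$ containing $v$, where $\gamma(\mathcal{B})$ is the \emph{middle line} through midpoints of edges, not the curve $\widetilde\gamma(\mathcal{B})$ from the nested family. Your entire argument treats $C_v(\mathcal{B})$ as if it were the component of $\partial P\setminus\widetilde\gamma(\mathcal{B})$; with that reading your Jordan--curve reasoning is clean and correct, but it proves a different statement. The distinction is not cosmetic: the middle lines $\gamma(\mathcal{B}_p)$ and $\gamma(\mathcal{B}_q)$ both pass through the midpoint of $E$ and hence are \emph{not} disjoint, so your key step ``$\widetilde\gamma(\mathcal{B}_q)$ is disjoint from $\widetilde\gamma(\mathcal{B}_p)$, hence lies in a single complementary component'' has no analogue for the curves that actually define $C_v$. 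Moreover, the whole point of the proposition (used immediately afterwards to prove that any two nested families of curves are isotopic) is that the order of $v_p,v_q$ on $E$ is controlled by a quantity intrinsic to the belts; your $\widetilde\gamma$--components depend on the particular nested family, so your version would not serve that purpose without further argument.

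The paper's proof circumvents this by working locally: assuming $C_v(\mathcal{B}_p)\subset C_v(\mathcal{B}_q)$ but $v$ closer to $v_q$, it follows the two $\widetilde\gamma$--curves segment by segment around the belts, using the $\gamma$--component hypothesis to control which side each new edge lies on, until the belts diverge and the two segments are forced to cross. Your global approach could be salvaged by first proving that $C_v(\mathcal{B}_p)\subset C_v(\mathcal{B}_q)$ holds if and only if the corresponding inclusion of $\widetilde\gamma$--components holds (for instance via a combinatorial characterisation in terms of which facets lie on the $v$--side of each belt), but that bridge is missing from your write-up.
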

\begin{proof}
Assume that $C_v(\mathcal{B}_p)\subset C_v(\mathcal{B}_q)$ and $v$ is closer to $v_q$. Consider the segments of $\widetilde{\gamma}(\mathcal{B}_p)$ and $\widetilde{\gamma}(\mathcal{B}_q)$ lying on the same facet and starting at $v_p$ and $v_q$. If their other ends $v_p'$ and $v_q'$ lie on the same edge, then for this edge again its vertex 
$v'\in C_{v'}(\mathcal{B}_p)=C_v(\mathcal{B}_p)\subset C_v(\mathcal{B}_q)=C_{v'}(\mathcal{B}_q)$ is closer to $v'_q$ than to $v_p'$,  since the curves are disjoint. Then take the next segments. Since the curves correspond to different compatible belts, after several steps we come to two segments at one facet starting at the same edge and finishing at different edges of this facet. Moreover, for the starting edge again its vertex  
$\widehat{v}\in C_{\widehat{v}}(\mathcal{B}_p)\subset C_{\widehat{v}}(\mathcal{B}_q)$ 
is closer to the vertex of  $\widetilde{\gamma}(\mathcal{B}_q)$. The ending edge for the segment of 
$\widetilde{\gamma}(\mathcal{B}_p)$ lies in $C_{\widehat{v}}(\mathcal{B}_q)$, and the ending edge  for the segment of 
$\widetilde{\gamma}(\mathcal{B}_q)$  lies in $\partial P\setminus \overline{C_{\widehat{v}}(\mathcal{B}_p)}$. 
Then the segments should intersect. A contradiction.
\end{proof}

\begin{lemma}\label{NFC-lemma}
Any nested family of belts admits a nested family of curves.
\end{lemma}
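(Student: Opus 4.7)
The plan is to obtain $\widetilde{\gamma}(\mathcal{B})$ by perturbing the middle lines $\gamma(\mathcal{B})$ on each edge of $P$, splitting the midpoints shared by several belts into distinct nearby points, and then drawing the resulting chords on each facet as pairwise disjoint piecewise-linear arcs.

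First I would show that on every edge $E = F_i \cap F_j$ of $P$, the subfamily $\mathcal{B}(E)$ of belts in the nested family containing both $F_i$ and $F_j$ is linearly ordered by inclusion of the components $C_v(\mathcal{B})$, for either vertex $v$ of $E$. For any two $\mathcal{B}_p, \mathcal{B}_q \in \mathcal{B}(E)$, pairwise compatibility gives (in the notation preceding Definition \ref{NSB}) $D_1 \subset C_1$ and $C_2 \subset D_2$. Since $F_i, F_j$ are adjacent in both belts, the two vertices of $E$ must lie on opposite sides of each middle line, so the ``crossing'' configuration $v \in C_1 \cap D_2$ and $v' \in C_2 \cap D_1$ is ruled out by $v' \in C_2 \subset D_2$. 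Hence $v$ and $v'$ occupy matched components, and the linear order is consistent as seen from either endpoint. I would then place on $E$ a distinct interior point $p_{\mathcal{B}, E}$ for every $\mathcal{B} \in \mathcal{B}(E)$, arranged from $v$ to $v'$ so that a smaller $C_v(\mathcal{B})$ corresponds to a point closer to $v$.

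Having chosen the points, for each belt $\mathcal{B}$ and each facet $F_i \in \mathcal{B}$ I would connect the points $p_{\mathcal{B}, E'}$ and $p_{\mathcal{B}, E''}$ on the two edges of $F_i$ shared with adjacent facets of $\mathcal{B}$ by a simple arc in the interior of $F_i$, and concatenate these arcs around the cyclic sequence of facets of $\mathcal{B}$ to define $\widetilde{\gamma}(\mathcal{B})$. Condition (1) of Definition \ref{def:nsc} is then immediate from the construction.

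The hard part will be condition (2), that the arcs on each facet can be chosen pairwise disjoint. I would argue in two cases. If two belts $\mathcal{B}_p, \mathcal{B}_q$ containing $F_i$ use disjoint pairs of edges of $F_i$, these pairs cannot interleave on $\partial F_i$; otherwise the straight middle-line chords on $F_i$ would cross transversally in the interior, forcing $\gamma(\mathcal{B}_q)$ to have points on both sides of $\gamma(\mathcal{B}_p)$ and contradicting compatibility. If the two belts share one or both edges on $F_i$, the ordering of points on the shared edges produced in Step 1 matches precisely the cyclic order required to draw the chords without crossing, because the side of $\mathcal{B}_p$'s chord on $F_i$ containing a vertex $v$ of a shared edge is the local trace of $C_v(\mathcal{B}_p)$, and inclusion of these local sides on $F_i$ coincides with global inclusion by connectedness. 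Hence the nested order on the points translates into a non-crossing matching of chord endpoints on $\partial F_i$, and disjoint arcs can be drawn inside each facet.
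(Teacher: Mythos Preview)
Your proposal is correct and follows essentially the same route as the paper's own proof: both arguments (i) show that the belts through a fixed edge $E=F_i\cap F_j$ are linearly ordered by inclusion of the components $C_v(\mathcal{B})$ (the paper packages this as Lemma~\ref{e-c-lemma} via the auxiliary notion of a \emph{cylindrical} family), (ii) place distinct interior points on each edge in that order, and (iii) verify that the resulting chords on each facet are pairwise disjoint by a case analysis on how many edges the two chords share (the paper's Lemma~\ref{lem:disj}). One small point: Definition~\ref{def:nsc} asks for straight \emph{segments} on each facet, not arbitrary simple arcs, so your phrase ``simple arc in the interior of $F_i$'' should be replaced by the straight chord; your final paragraph already argues at the level of chords and non-interleaving endpoints, so this is only a wording issue and the substance is unchanged.
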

\begin{proof}
We start with a definition.
\begin{definition}
Let us call a nested family of belts {\it cylindrical}, if there is an ordering of belts $\mathcal{B}_1$, \dots, $\mathcal{B}_N$
and a choice of connected components $C(\mathcal{B}_i)$ of $\partial P\setminus \gamma(\mathcal{B}_i)$ 
such that $C(\mathcal{B}_i)\subset C(\mathcal{B}_{i+1})$ for $i=1,\dots,N-1$.
\end{definition}

\begin{lemma}\label{e-c-lemma}
For any nested family of belts and an edge $F_i\cap F_j$ the subfamily of belts containing $F_i$ and $F_j$ is cylindrical. 
\end{lemma} 
\begin{proof}
A belt $\mathcal{B}$ contains $F_i$ and $F_j$ if and only if $\gamma(\mathcal{B})$ contains the midpoint of $F_i\cap F_j$.
Moreover, the vertices $v$ and $w$ of the edge $F_i\cap F_j$ lie in different connected 
components of  $\partial P\setminus\gamma(\mathcal{B})$.  Since $v\notin C_w(\mathcal{B})$, we have $C_v(\mathcal{B})\not\subset C_w(\mathcal{B}')$ for any two belts $\mathcal{B}$ and $\mathcal{B}'$ containing $F_i$ and $F_j$. 
Since the belts are compatible,
$C_v(\mathcal{B})\subset C_v(\mathcal{B}')$ or $C_v(\mathcal{B}')\subset C_v(\mathcal{B})$. 
Then the ordering of the belts we need is 
the inclusion ordering of the components  $C_v(\mathcal{B})$.
\end{proof}

\begin{construction}\label{con:hgamma}
For a nested family of belts and each edge  $F_i\cap F_j$ of $P$ take its vertex $v$ and
consider the ordering of the belts containing $F_i$ and $F_j$ by inclusion of components $C_v(\mathcal{B})$.
Put in the interior of $F_i\cap F_j$ in the same order in direction from $v$ to $w$ disjoint points corresponding to these belts 
(for example, divide the edge $F_i\cap F_j$ into equal parts by these points).
Then for each belt $\mathcal{B}$ we change the middle line $\gamma(\mathcal{B})$ to the piecewise-linear curve 
$\widehat{\gamma}(\mathcal{B})$ consisting of segments in facets of the belt connecting the assigned points on the edges 
of intersection with adjacent  facets of the belt. 
\end{construction}

\begin{lemma}\label{lem:disj}
For any two belts $\mathcal{B}_1$ and $\mathcal{B}_2$ from the nested family of belts the curves
$\widehat{\gamma}(\mathcal{B}_1)$ and $\widehat{\gamma}(\mathcal{B}_2)$ do not intersect.
\end{lemma}
\begin{proof}
Let the curves $\widehat{\gamma}(\mathcal{B}_1)$ and $\widehat{\gamma}(\mathcal{B}_2)$ have a common point.
By construction these curves have no common points on the edges and do not pass through the vertices of a polytope.
Then their common point is an interior point of some facet $F_i$. In particular, this point is the intersection of some 
segments $\widehat{E_1}$ and $\widehat{E_2}$ of these curves lying in $F_i$.
If ends of these segments lie on four different edges, then the corresponding curves $\gamma(\mathcal{B}_1)$ and 
$\gamma(\mathcal{B}_2)$ intersect transversally, which is a contradiction. Thus, ends of the segments lie either
on three or two edges of $F_i$. For  $\gamma(\mathcal{B}_1)$ and $\gamma(\mathcal{B}_2)$ the corresponding segments
$E_1$ and $E_2$ ether have a unique common vertex, or coincide. Without loss of generality assume that for the edge of $P$ 
containing the vertex $v$ of $E_1\cap E_2$ we have  $C(\mathcal{B}_1)\subset C(\mathcal{B}_2)$. In the first case the other vertex of $E_1$ lies in $C(\mathcal{B}_2)$. 
After the shift the first end of $\widehat{E_1}$ lies closer to $v$ than the corresponding end of $\widehat{E_2}$, while the other 
ends of these segments stay at the same edges as before. Hence, $\widehat{E_1}\cap \widehat{E_2}=\varnothing$. In the second case for both edges of $F_i$ containing vertices of $E_1$ and $E_2$
their vertices $v$ and $v'$ lie in $C(\mathcal{B}_1)\subset C(\mathcal{B}_2)$. Therefore 
after the shift the edges $\widehat{E_1}$ and $\widehat{E_2}$ do not intersect.
\end{proof}
This finishes the proof of Lemma \ref{NFC-lemma}.
\end{proof}

\begin{lemma}\label{lem:iso}
Any two nested families of curves corresponding to the same nested family of belts are isotopic in the class of nested families of curves.
\end{lemma}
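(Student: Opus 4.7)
The plan is to construct an explicit isotopy through nested families of curves by first linearly interpolating the intersection points on each edge of $P$, then filling in each facet by straight segments. The pivotal observation is that the combinatorial data (the linear order of intersection points on each edge) is determined by the nested family of belts itself, so both given families see the same data.

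Fix two nested families $\Gamma^0=\{\widetilde{\gamma}^0(\mathcal{B})\}$ and $\Gamma^1=\{\widetilde{\gamma}^1(\mathcal{B})\}$ associated with the same nested family of belts. For any edge $E=F_i\cap F_j$ and any belt $\mathcal{B}$ containing both $F_i$ and $F_j$, the curve $\widetilde{\gamma}^s(\mathcal{B})$ crosses $E$ at a unique point $p^s_{E,\mathcal{B}}$ in the relative interior. By Proposition \ref{vorder}, the linear order of the points $\{p^s_{E,\mathcal{B}}\}_{\mathcal{B}}$ along $E$ is governed by the inclusion chain of the components $C_v(\mathcal{B})$ for either vertex $v$ of $E$; in particular it coincides for $s=0$ and $s=1$. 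I would then set $p^t_{E,\mathcal{B}}:=(1-t)\,p^0_{E,\mathcal{B}}+t\,p^1_{E,\mathcal{B}}$. Since both orderings agree, these points remain pairwise distinct and in the relative interior of $E$ for every $t\in[0,1]$.

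Next, for each belt $\mathcal{B}$, each facet $F\in\mathcal{B}$ with belt-neighbours $F_i,F_j$, and each $t$, I let the segment of $\widetilde{\gamma}^t(\mathcal{B})$ in $F$ be the straight line segment joining $p^t_{F\cap F_i,\mathcal{B}}$ and $p^t_{F\cap F_j,\mathcal{B}}$. Concatenating along the belt cycle yields a simple closed PL curve (consecutive segments meet at a single edge-point; non-consecutive segments lie in disjoint facets by the belt definition). It remains to verify that curves from different belts are disjoint at every $t$. Since they meet the $1$-skeleton of $P$ only at the distinct points $p^t_{E,\mathcal{B}}$, any intersection must lie in the relative interior of some facet $F$. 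Realising $F$ as a convex polygon, two straight chords cross if and only if their endpoints interlace in the cyclic order on $\partial F$. By the previous paragraph, the linear order of the endpoints on each edge of $F$ is preserved under the interpolation, hence the cyclic order on $\partial F$ is preserved throughout. Since no chords interlace at $t=0$, none interlace at any intermediate time, so $\{\widetilde{\gamma}^t(\mathcal{B})\}$ is a nested family of curves for every $t$.

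I expect the only delicate step to be this non-crossing verification inside facets; the convex-polygon chord criterion reduces it cleanly to the edge-wise order preservation supplied by Proposition \ref{vorder}. A secondary observation worth recording is that each chord in a facet $F$ genuinely joins two distinct, non-adjacent edges of $F$: if $F_i$ and $F_j$ shared a vertex of $F$, the three facets $F,F_i,F_j\in\mathcal{B}$ would share a vertex, contradicting the belt axiom. This ensures the segments are proper chords of the polygon, so the cyclic-order criterion indeed applies.
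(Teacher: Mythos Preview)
Your proof is correct and follows the same approach as the paper: use Proposition~\ref{vorder} to see that the linear order of intersection points along each edge is dictated by the family of belts (hence is the same for $\Gamma^0$ and $\Gamma^1$), and then linearly interpolate these edge points. The paper's argument is terse and stops there, whereas you supply the additional verification --- via the chord-interlacing criterion in each convex facet --- that the interpolated straight segments remain pairwise disjoint throughout; this fills in a step the paper leaves implicit.
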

\begin{proof}
Indeed, the curves are defined uniquely by a sequence of points at each edge of the polytope. By Proposition \ref{vorder}
on each edge the orders of the points corresponding to different belts are the same. So 
we can move one sequence of points to the other by an isotopy at each edge. 
\end{proof}

\subsection{Operations on polytopes}

\begin{construction}\label{con:bcut}
For any $k$-belt of a simple $3$-polytope one can define operation of {\it cutting the polytope along the belt}. 
Combinatorially take a piecewise linear closed curve consisting of segments 
connecting the interior points of edges of intersection of each facet of a belt with the adjacent facets of the belt, for example 
$\gamma(\mathcal{B})$. Then cut the surface of the polytope along this curve to obtain two topological disks partitioned into polygons. Glue up each disk by a polygon along its boundary.  We obtain two partitions of a sphere into polygons. Using 
the Steinitz theorem it is easy to see that each partition is combinatorially equivalent to the boundary of a simple polytope. 
Denote these simple polytopes $P_1$ and $P_2$.  Each polytope is different from the simplex, since no three facets 
of the belt have a common vertex.
\end{construction}
The facets of the polytope $P_i$ are of three types: 
\begin{enumerate}
\item the facets corresponding to the facets $F_a$ of $P$ lying in the connected component $C_i(\mathcal{B})$ of $\partial P\setminus \gamma(\mathcal{B})$ corresponding to $P_i$. We denote them $\widehat{F_a}$.
\item the facets corresponding to the facets $F_a\in \mathcal{B}$. We denote them also $\widehat{F_a}$.
\item the new facet $F$ corresponding to the polygon glued along the boundary to $\gamma(\mathcal{B})$.
\end{enumerate}
\begin{remark}\label{famcut}
Lemma \ref{lem:iso} implies that an operation of {\it cutting a simple $3$-polytope along a nested family of belts} is well-defined.
\end{remark}
\begin{lemma}\label{intlemma}
The facets $\widehat{F_{a_1}}$, $\dots$, $\widehat{F_{a_k}}$ intersect in $P_i$ if and only if the corresponding
facets $F_{a_1}$, $\dots$, $F_{a_k}$ intersect in $P$.
\end{lemma}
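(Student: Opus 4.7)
The plan is to derive both implications from the explicit combinatorial structure of $P_i$ recorded in Construction~\ref{con:bcut}. The first step is to catalogue the faces of $P_i$ into \emph{old} features, inherited from $P$ in $\overline{C_i(\mathcal{B})}$, and \emph{new} features created by the cut: the new vertices are the points where $\gamma(\mathcal{B})$ crosses the belt-adjacency edges of $P$; the new edges are either halves of such belt-adjacency edges or segments of $\gamma(\mathcal{B})$ lying in the interior of a single belt facet; and the single new facet is $F$. The crucial structural observation, which I will invoke repeatedly, is that at every new vertex the three facets of $P_i$ meeting it are $F$ together with two consecutive belt facets $\widehat{F_a}, \widehat{F_b}$, and that every new edge of the second kind (a segment of $\gamma(\mathcal{B})$ interior to a belt facet $F_a$) lies on exactly $F$ and $\widehat{F_a}$.

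For the direction ``$\Leftarrow$'', since $P$ is a simple $3$-polytope any nonempty intersection $F_{a_1}\cap\dots\cap F_{a_k}$ has $k\le 3$ and equals a facet, an edge, or a vertex. In the vertex case $k=3$, the belt axiom that no three facets of $\mathcal{B}$ share a vertex forces at least one $F_{a_j}$ to lie in $C_i(\mathcal{B})$, pushing the common vertex into $\overline{C_i(\mathcal{B})}$, where it is preserved as a vertex of $P_i$ contained in all three $\widehat{F_{a_j}}$. In the edge case $k=2$ the shared edge is either contained entirely in $\overline{C_i(\mathcal{B})}$ and survives unchanged, or is a belt-adjacency edge of $P$ and becomes a half-edge of $P_i$ lying on both $\widehat{F_{a_1}}$ and $\widehat{F_{a_2}}$. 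The case $k=1$ is trivial and $k\ge 4$ gives an empty intersection on both sides.

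For the direction ``$\Rightarrow$'' I would consider the minimal face $G$ of $P_i$ contained in $\widehat{F_{a_1}}\cap\dots\cap\widehat{F_{a_k}}$ and rule out that $G$ is a new feature. Indeed, if $G$ were a new vertex then the three facets at $G$ in $P_i$ would be $\{F,\widehat{F_a},\widehat{F_b}\}$, forcing $F\in\{\widehat{F_{a_j}}\}$, a contradiction; if $G$ were a new edge of the second kind then the two facets at $G$ would be $\{F,\widehat{F_a}\}$, the same contradiction. Hence $G$ is either an old face of $P$ lying in $\overline{C_i(\mathcal{B})}$ or a half of a belt-adjacency edge, and in each case the corresponding face of $P$ (respectively, its parent edge) is contained in $F_{a_1}\cap\dots\cap F_{a_k}$. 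The main obstacle is purely combinatorial bookkeeping, and the only substantive input is the belt axiom used in the vertex case above; everything else reduces to the structural observation made in the first paragraph.
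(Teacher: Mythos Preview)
Your proof is correct and follows essentially the same approach as the paper. Both arguments hinge on the belt axiom (no three belt facets share a vertex; adjacent belt facets are consecutive) for the forward direction, and on the observation that every newly created face of $P_i$ involves the new facet $F$ for the reverse direction. The paper organizes the ``$\Leftarrow$'' step as a short proof by contradiction and dispatches ``$\Rightarrow$'' in a single sentence (``the construction of $P_i$ does not create new intersections of old facets''), whereas you give an explicit case analysis and catalogue the face structure; the content is the same.
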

\begin{proof}
If $F_{a_1}$, $\dots$, $F_{a_k}$ intersect in $P$, consider their intersection. It is either a vertex or an edge. 
If the corresponding facets  $\widehat{F_{a_1}}$, $\dots$, $\widehat{F_{a_k}}$ do not intersect, then this vertex or edge
lies outside $\overline{C_i(\mathcal{B})}$. This is possible only if all the facets $F_{a_1}$, $\dots$, $F_{a_k}$ belong to $\mathcal{B}$. Then $k=2$ and $F_{a_1}$ and $F_{a_2}$ are successive in $\mathcal{B}$. Then $F_{a_1}\cap F_{a_2}$ intersects 
$\gamma(\mathcal{B})$, which is a contradiction. On the other hand, the construction of $P_i$ does not create
new intersections of old facets, that is if $F_{a_1}$, $\dots$, $F_{a_k}$ do not intersect in $P$, then $\widehat{F_{a_1}}$, $\dots$, $\widehat{F_{a_k}}$ do not intersect in $P_i$.
\end{proof}
\begin{lemma}\label{Piblemma}
For each polytope $P_i$, $i=1,2$, its $3$-belts bijectively correspond to $3$-belts of $P$ consisting of facets presented in $P_i$.
(Equivalently, $3$-belts $\mathcal{L}$ of $P$ with $\gamma(\mathcal{L})\subset \overline{C_i(\mathcal{B})}$.) This bijection preserves triviality of belts, except for case when the belt around $F$ corresponds to the nontrivial $3$-belt $\mathcal{B}$.
\end{lemma}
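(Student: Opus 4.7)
The plan is to set up the correspondence explicitly and verify its properties using Lemma \ref{intlemma} as the main tool. I would send any 3-belt $\{\widehat{F_{a_1}},\widehat{F_{a_2}},\widehat{F_{a_3}}\}$ of $P_i$ that does not contain the new facet $F$ to the triple $\{F_{a_1},F_{a_2},F_{a_3}\}$ in $P$. By Lemma \ref{intlemma}, applied both to pairs and to the triple, the image satisfies pairwise adjacency and has empty triple intersection iff the original triple in $P_i$ does, so the map is well defined with image among 3-belts of $P$ whose three facets are all presented in $P_i$, and its inverse is the obvious lift.

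Next I would rule out 3-belts of $P_i$ containing $F$. Since the neighbors of $F$ in $P_i$ are precisely the $\widehat{F_c}$ with $F_c\in\mathcal{B}$, such a belt would have the form $\{F,\widehat{F_a},\widehat{F_b}\}$ with $F_a,F_b\in\mathcal{B}$; Lemma \ref{intlemma} then forces $F_a$ and $F_b$ to be successive in $\mathcal{B}$, but the midpoint of $F_a\cap F_b$ is a vertex of $\gamma(\mathcal{B})$ and hence of $F$, yielding a common vertex of all three facets — a contradiction. On the other hand, when $\mathcal{B}=\{F_1,F_2,F_3\}$ is itself a 3-belt of $P$, the triple $\{\widehat{F_1},\widehat{F_2},\widehat{F_3}\}$ is a legitimate (non-$F$-containing) 3-belt of $P_i$ surrounding $F$, and it is the image of $\mathcal{B}$ under the map; this is the source of the exceptional case in the triviality clause.

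To rewrite the image as the 3-belts $\mathcal{L}$ of $P$ with $\gamma(\mathcal{L})\subset\overline{C_i(\mathcal{B})}$, the $(\Leftarrow)$ direction is immediate since each segment of $\gamma(\mathcal{L})$ lies in the relative interior of its facet. For $(\Rightarrow)$ I would invoke Corollary \ref{3bnested}: $\mathcal{L}$ and $\mathcal{B}$ are compatible, so $\gamma(\mathcal{L})\subset\overline{C_j(\mathcal{B})}$ for some $j\in\{1,2\}$. If $\mathcal{L}\neq\mathcal{B}$, then $\mathcal{L}$ cannot consist entirely of facets of $\mathcal{B}$ (in a $k$-belt with $k\geqslant 4$ non-successive facets are non-adjacent, and a 3-belt has no proper sub-3-belt), so $\mathcal{L}$ contains a facet $F_a\notin\mathcal{B}$ which, being presented in $P_i$, lies in $C_i(\mathcal{B})$, forcing $j=i$.

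Finally, for the triviality claim I would use that a facet of $P_i$ is either $F$ or $\widehat{T}$; Lemma \ref{intlemma} gives ``$\widehat{\mathcal{L}}$ surrounds $\widehat{T}$ iff $\mathcal{L}$ surrounds $T$'' since both reduce to the same list of pairwise adjacencies, while $\widehat{\mathcal{L}}$ surrounding $F$ forces all its facets to lie in $\mathcal{B}$, hence $\mathcal{L}=\mathcal{B}$ with $\mathcal{B}$ a 3-belt — the stated exception. The main obstacle I foresee is the converse: when $\mathcal{L}\neq\mathcal{B}$ surrounds $T$ in $P$ with $\gamma(\mathcal{L})\subset\overline{C_i(\mathcal{B})}$, I must check that $T$ itself is presented in $P_i$. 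I would argue by Jordan separation: the component $D_1(\mathcal{L})=T$ is disjoint from $\gamma(\mathcal{B})$, because segments of $\gamma(\mathcal{B})$ live in the relative interiors of $\mathcal{B}$'s facets and cannot lie in the closed facet $\overline{T}$, so $T$ lies in one component of $\partial P\setminus\gamma(\mathcal{B})$; were this component $C_{3-i}(\mathcal{B})$, then $\gamma(\mathcal{L})=\partial T\subset\overline{C_{3-i}(\mathcal{B})}$ would combine with the hypothesis $\gamma(\mathcal{L})\subset\overline{C_i(\mathcal{B})}$ to give $\gamma(\mathcal{L})\subset\gamma(\mathcal{B})$ and hence $\mathcal{L}=\mathcal{B}$, a contradiction.
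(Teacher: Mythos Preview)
Your proposal is correct and follows essentially the same route as the paper: both arguments hinge on Lemma~\ref{intlemma} to transfer adjacency/non-adjacency, rule out $3$-belts through $F$ via the successive-facets observation, and handle the triviality clause by tracking the surrounded facet. One small point to tighten: in your Jordan-separation step you assert that segments of $\gamma(\mathcal{B})$ cannot lie in $\overline{T}$, which needs $T\notin\mathcal{B}$ --- the paper records this explicitly (``a belt cannot contain triangles''), and you should too, since otherwise the disjointness claim fails; also, for the equivalence clause the paper argues directly that midpoints of the edges of $\mathcal{L}$ lie in $\overline{C_i(\mathcal{B})}$, which is slightly quicker than your detour through Corollary~\ref{3bnested}.
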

\begin{proof}
First let us prove that $3$-belts of $P$ consisting of facets presented in $P_i$ are exactly $3$-belts $\mathcal{L}$ of $P$ with $\gamma(\mathcal{L})\subset \overline{C_i(\mathcal{B})}$. Indeed, if $\mathcal{L}$ consists of facets presented in $P_i$, 
then centres of edges of intersection of its successive facets lie in  $\overline{C_i(\mathcal{B})}$. Therefore, $\gamma(\mathcal{L})\subset \overline{C_i(\mathcal{B})}$.
And visa versa, if $\gamma(\mathcal{L})\subset \overline{C_i(\mathcal{B})}$, then centres of 
edges of intersection of successive facets of $\mathcal{L}$ lie in  $\overline{C_i(\mathcal{B})}$. This implies that all the facets 
of $\mathcal{L}$ are presented in $P_i$.

The facet $F$ does not lie in any $3$-belt. For otherwise, this belt has the form $(F,\widehat{F_a},\widehat{F_b})$,
where $F_a, F_b\in\mathcal{B}$. In this case $F_a\cap F_b\ne\varnothing$. Hence, $F_a$ and $F_b$ are successive facets of 
$\mathcal{B}$. In particular, $F_a\cap F_b$ is the edge with a vertex on $\gamma(\mathcal{B})$, which corresponds to the vertex $F\cap\widehat{F_a}\cap\widehat{F_b}$.

The correspondence  
$\widehat{\mathcal{L}}=(\widehat{F_a},\widehat{F_b},\widehat{F_c})\leftrightarrow (F_a,F_b,F_c)=\mathcal{L}$ 
between $3$-belts
of $P_i$ and $3$-belts of $P$ consisting of facets presented in $P_i$ follows from Lemma \ref{intlemma}.
If the belt $\widehat{\mathcal{L}}$ surrounds a facet $\widehat{F_d}$, then $\mathcal{L}$ surrounds $F_d$.
If $\widehat{\mathcal{L}}$ surrounds the triangle $F$, then this belt corresponds to the nontrivial $3$-belt $\mathcal{B}$.
If the $3$-belt $\mathcal{L}$ consisting of facets presented in $P_i$ surrounds a facet $F_d$ of $P$, 
then $F_d$ is also presented in $P_i$. For otherwise, $\mathcal{L}$ can be only $\mathcal{B}$, which is nontrivial.
Also $F_d$ does not belong to $\mathcal{B}$, since a belt can not contain triangles. 
Thus, $\widehat{\mathcal{L}}$ surrounds $\widehat{F_d}$. 
\end{proof}

\begin{corollary}\label{Pibcor}
Any $3$-belt of $P$ different from  $\mathcal{B}$ corresponds to a $3$-belt of exactly one polytope $P_1$ or $P_2$. 
\end{corollary}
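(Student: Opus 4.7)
The plan is to apply Lemma \ref{Piblemma} after establishing that, for a $3$-belt $\mathcal{L}\ne\mathcal{B}$, the curve $\gamma(\mathcal{L})$ lies in the closure of exactly one of the two components $C_1(\mathcal{B})$ and $C_2(\mathcal{B})$ of $\partial P\setminus\gamma(\mathcal{B})$. Existence of at least one such component is essentially immediate from Corollary \ref{3bnested}: any two $3$-belts of $P$ form a nested family, hence $\mathcal{L}$ and $\mathcal{B}$ are compatible, which by definition means $\gamma(\mathcal{L})\subset\overline{C_i(\mathcal{B})}$ for some $i\in\{1,2\}$. By Lemma \ref{Piblemma}, this inclusion produces a $3$-belt $\widehat{\mathcal{L}}$ of $P_i$.

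For uniqueness, the key observation is that $\overline{C_1(\mathcal{B})}\cap\overline{C_2(\mathcal{B})}=\gamma(\mathcal{B})$. Thus if $\gamma(\mathcal{L})$ were contained in both closures, we would have $\gamma(\mathcal{L})\subset\gamma(\mathcal{B})$. Since both middle lines are piecewise linear simple closed curves on the $2$-sphere $\partial P$, and a simple closed curve contained in another simple closed curve coincides with it, this would force $\gamma(\mathcal{L})=\gamma(\mathcal{B})$. But the belt is recoverable from its middle line (its facets are exactly the facets of $P$ containing the segments of the middle line), so $\gamma(\mathcal{L})=\gamma(\mathcal{B})$ implies $\mathcal{L}=\mathcal{B}$, contrary to our assumption. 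Hence $\gamma(\mathcal{L})$ lies in the closure of exactly one component, and $\mathcal{L}$ corresponds via Lemma \ref{Piblemma} to a $3$-belt of exactly one of the two polytopes $P_1$, $P_2$.

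The only non-trivial point is the uniqueness step, and even that reduces to the elementary topological fact about simple closed curves on $S^2$ together with the bijection between belts and their middle lines, so no real obstacle is expected.
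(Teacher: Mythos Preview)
Your proof is correct and follows essentially the same route as the paper: both invoke Corollary~\ref{3bnested} for existence and Lemma~\ref{Piblemma} for the correspondence. The only cosmetic difference is in the uniqueness step --- the paper uses the facet characterization from Lemma~\ref{Piblemma} (a belt presented in both $P_1$ and $P_2$ has all its facets in $\mathcal{B}$, hence equals $\mathcal{B}$), while you use the equivalent curve characterization; these are two sides of the same lemma.
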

\begin{proof}
Indeed, since $3$-belts form a nested family, any $3$-belt of $P$ either is $\mathcal{B}$, or consists of facets presented 
simultaneously in one of the polytopes $P_1$ and $P_2$. If it consists of facets presented in both $P_1$ and $P_2$, then 
it coincides with $\mathcal{B}$.
\end{proof}

\begin{construction}
Given simple $3$-polytopes $P_1$ and $P_2$ with chosen facets $F_i$ and $F_j$ surrounded by $k$-belts
with the same $k$ one can take a {\it connected sum of $P_1$ and $P_2$ along $k$-gons}. This is a simple $3$-polytope
$P$ such that $\partial P$ is combinatorially obtained  from $\partial P_1$ and $\partial P_2$ by deletion of interiors of chosen facets and gluing the resulting disks partitioned into polygons along $k$-cycles followed by a deletion of this $k$-cycle.
The Steinitz theorem implies that $P$ is indeed a simple $3$-polytope. The operation depends on the identification of boundaries of chosen facets. It is inverse to the operation of cutting a polytope along the belt. 
\end{construction}
\begin{corollary}
Any $k$-belt $\mathcal{B}$ of a simple $3$-polytope $P$ corresponds to a decomposition of $P$ into a connected sum of some polytopes $P_1$ and $P_2$ along $k$-gons.
\end{corollary}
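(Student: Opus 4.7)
The plan is to produce $P_1$ and $P_2$ by applying Construction \ref{con:bcut} to the belt $\mathcal{B}$ and then check that the inverse connected-sum operation is available. Write $\mathcal{B}=(F_{a_1},\dots,F_{a_k})$ and let $P_1$, $P_2$ be the two simple $3$-polytopes produced by cutting $P$ along $\mathcal{B}$, each carrying a new facet $F$ of combinatorial type a $k$-gon obtained by gluing up $\gamma(\mathcal{B})$.

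The first step is to show that in each $P_i$ the new facet $F$ is surrounded by a $k$-belt, namely by the cyclically ordered facets $\widehat{F_{a_1}},\dots,\widehat{F_{a_k}}$. By construction the edges of $F$ are exactly the edges $F\cap \widehat{F_{a_s}}$, so these $k$ facets are the ones adjacent to $F$ in $P_i$, in the cyclic order inherited from $\mathcal{B}$. Lemma \ref{intlemma} gives that $\widehat{F_{a_s}}\cap \widehat{F_{a_t}}\neq\varnothing$ in $P_i$ iff $F_{a_s}\cap F_{a_t}\neq\varnothing$ in $P$, hence by the belt axiom for $\mathcal{B}$ consecutive facets meet along an edge while non-consecutive ones are disjoint. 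The same lemma rules out any triple intersection $\widehat{F_{a_r}}\cap \widehat{F_{a_s}}\cap \widehat{F_{a_t}}$, since no three facets of $\mathcal{B}$ share a vertex in $P$. Finally, at each vertex $F\cap \widehat{F_{a_s}}\cap \widehat{F_{a_{s+1}}}$ of $F$ only these three facets meet in $P_i$ (this vertex is one of the $k$ endpoints produced when gluing up $\gamma(\mathcal{B})$), so no further triple intersection involves $F$. Thus $(\widehat{F_{a_1}},\dots,\widehat{F_{a_k}})$ is a $k$-belt surrounding $F$ in $P_i$ for $i=1,2$.

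Having identified the $k$-gon $F\subset P_i$ surrounded by a $k$-belt in each piece, we form the connected sum of $P_1$ and $P_2$ along these two $k$-gons, using the identification of boundary edges that sends $F\cap \widehat{F_{a_s}}\subset \partial F\subset P_1$ to $F\cap \widehat{F_{a_s}}\subset\partial F\subset P_2$ for every $s$. As the Construction explicitly states, the connected-sum operation along $k$-gons is inverse to cutting along a $k$-belt; concretely, gluing the two disks $\overline{C_1(\mathcal{B})},\overline{C_2(\mathcal{B})}\subset\partial P$ back along $\gamma(\mathcal{B})$ reproduces the original cell decomposition of $\partial P$, and Steinitz's theorem identifies the result with $P$. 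Hence $P$ is the connected sum of $P_1$ and $P_2$ along the chosen $k$-gons, as required.

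The step that merits the most care is the verification that $F$ is surrounded by a $k$-belt; everything else is a formal consequence of Construction \ref{con:bcut} together with the statement that cutting and connected sum are mutually inverse. The main technical input is Lemma \ref{intlemma}, which translates the combinatorics of face intersections in $P_i$ back to those in $P$ and lets the belt axioms for $\mathcal{B}$ be transported to belt axioms for the cycle around $F$.
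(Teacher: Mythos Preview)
Your proof is correct and follows the same approach as the paper: the paper's proof is the single sentence ``This is a corollary of Construction~\ref{con:bcut},'' relying on the statement in the connected-sum construction that it ``is inverse to the operation of cutting a polytope along the belt.'' You simply unpack this by explicitly verifying via Lemma~\ref{intlemma} that the new facet $F$ in each $P_i$ is surrounded by a $k$-belt (so the connected-sum operation is legitimately applicable), a detail the paper leaves implicit.
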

\begin{proof}
This is a corollary of Construction \ref{con:bcut}.
\end{proof}

\begin{definition}
A {\it connected sum} of simple $3$-polytopes $P$ and $Q$ along {\it vertices} $v\in P$, $w\in Q$ is a simple 
$3$-polytope $P\#_{v,w}Q$ which is obtained by cutting off vertices $v$ and $w$ and taking the connected sum along 
the arising triangles. 
\end{definition}
This connected sum operation gives a $3$-belt on the resulting polytope.  

\begin{remark}
The Steinitz theorem implies that for a $3$-belt $\mathcal{B}$ in Construction \ref{con:bcut} each polytope $P_i$ admits a shrinking of the triangle obtained from 
the belt into a point to produce a new $3$-polytope $Q_i$. Then $P$ is a connected sum of $Q_1$ and $Q_2$ along vertices.
Topologically the polytopes $Q_1$ and $Q_2$ are obtained from $P$ by cutting along the middle line of the belt and shrinking the
arising triangles to points.
Thus, each triangle of a polytope $P\ne\Delta^3$ corresponds to a connected sum of a polytope $Q$ with this triangle shrinked with a simplex along vertices. 
\end{remark}
\begin{corollary}\label{cor:3bQ}
Let a simple $3$-polytope $P$ be a connected sum of $Q_1$ and $Q_2$ along vertices. 
Then for each polytope $Q_i$, $i=1,2$, its $3$-belts bijectively correspond to $3$-belts of $P$ different from the arising 
$3$-belt and consisting of facets presented in $Q_i$. 
\end{corollary}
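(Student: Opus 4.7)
My plan is to factor the desired correspondence through the intermediate polytopes $P_1,P_2$ produced by cutting $P$ along the arising $3$-belt. Let $\mathcal{B}$ denote this arising $3$-belt. By Construction \ref{con:bcut} together with the remark preceding the statement, cutting $P$ along $\mathcal{B}$ yields polytopes $P_1$ and $P_2$, each carrying a new triangular facet $F$, and collapsing $F$ to a vertex in $P_i$ recovers $Q_i$.

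First, Lemma \ref{Piblemma} supplies a bijection between the $3$-belts of $P_i$ and the $3$-belts of $P$ consisting of facets presented in $P_i$. Under this bijection the belt around $F$ in $P_i$ corresponds exactly to $\mathcal{B}$, and the proof of that lemma also records that $F$ itself belongs to no $3$-belt of $P_i$. Hence removing $\mathcal{B}$ on the $P$-side matches with removing the belt around $F$ on the $P_i$-side.

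Next I would check that the collapse $P_i\rightsquigarrow Q_i$ induces a bijection between $3$-belts of $Q_i$ and $3$-belts of $P_i$ that do not surround $F$. Since the combinatorics of $P_i$ and $Q_i$ differ only by the contraction of $F$, any three facets of $Q_i$ (none of which is $F$) have the same pairwise intersections as their counterparts in $P_i$, so pairwise adjacency is preserved; moreover, contracting $F$ creates a common triple vertex only for triples of facets all adjacent to $F$, i.e.\ exactly the $3$-belt of $P_i$ surrounding $F$. Thus the $3$-belts of $Q_i$ are in bijection with the $3$-belts of $P_i$ other than the one surrounding $F$.

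Composing the two bijections yields a correspondence between $3$-belts of $Q_i$ and $3$-belts of $P$ different from $\mathcal{B}$ and consisting of facets presented in $P_i$; since $F$ is not a facet of $P$, these are precisely the $3$-belts of $P$ consisting of facets presented in $Q_i$. The only substantive point is the local check that neither a spurious new $3$-belt is created nor a genuine one destroyed when $F$ is collapsed to the apex vertex of $Q_i$; everything else is immediate from Lemma \ref{Piblemma} and the identification of the facet sets.
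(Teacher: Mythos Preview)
Your proposal is correct and follows essentially the same route as the paper: both factor through the polytopes $P_i$ obtained by cutting along $\mathcal{B}$, invoke the belt-bijection between $P$ and $P_i$ (the paper cites Corollary~\ref{Pibcor}, you cite the underlying Lemma~\ref{Piblemma}), and then argue that shrinking the triangle $F$ to a vertex preserves $3$-belts except for the one around $F$. The only cosmetic difference is that the paper dispatches the inverse direction by observing that $P_i$ is obtained from $Q_i$ by a vertex cut, which ``evidently transforms $3$-belts to $3$-belts,'' whereas you handle both directions via the local analysis of the collapse.
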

\begin{proof}
This follows from Corollary \ref{Pibcor}. 
Namely, since the facet $F$ can not belong to any $3$-belt of $P_i$, shrinking this facet to a point transforms $3$-belts
of $P_i$ different from the belt around $F$ to $3$-belts of $Q_i$. On the other hand, the polytope $P_i$
is obtained from $Q_i$ by cutting off a vertex. This operation evidently transforms $3$-belts to $3$-belts. 
\end{proof}

Now let us consider a geometric realization of a connected sum of two simple $3$-polytopes along triangles and quadrangles. 
We will need it below. For triangles the result is simpler.
\begin{proposition}\label{3bpt}
Let $P$ and $P'$ be geometric simple $3$-polytopes, different from the simplex, and $F_i$ and $F_j'$ be their triangular
facets. Choose an identification of $F_i$ 
and $F_j'$ as combinatorial polygons. Then up to projective transformations $P$ and $P'$ may
be glued along congruent facets $F_i$ and $F_j'$ following the combinatorial identification 
to produce a geometric $3$-polytope $Q$
having a $3$-belt instead of these facets.
\end{proposition}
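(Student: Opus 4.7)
My plan is to bring $P$ and $P'$ into a normal form by projective transformations so that the gluing is automatic. Specifically, I seek realisations in which (i) $F_i=F_j'$ is the same triangle $T$ lying in a plane $\Pi$, with $P$ and $P'$ in opposite closed half-spaces, and (ii) for each edge $e_k$ of $T$ the facet of $P$ adjacent to $e_k$ and the facet of $P'$ adjacent to $e_k$ lie in one common plane. Under (i) and (ii) the union $Q=P\cup P'$ is a convex polytope, since it is the intersection of the supporting half-spaces of all remaining facets (each of $P$ and $P'$ lies on the $T$-interior side of every such supporting plane); at each $e_k$ the two abutting facets merge into a single planar face of $Q$, these three merged faces form the required $3$-belt around the interior triangle $T$, and $T$ itself is no longer a face. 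This matches the combinatorial connected-sum-along-facets construction recalled earlier.

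Condition (i) is arranged by an affine transformation of $P'$ sending the vertices of $F_j'$ to those of $F_i$ following the prescribed identification, composed with a reflection in $\Pi$ if necessary. For condition (ii) I reduce to the following normal-form lemma: for any simple $3$-polytope $R$ with triangular facet $F$ lying in $\Pi$, some projective transformation fixing $\Pi$ pointwise sends $R$ to a convex polytope whose three facets adjacent to $F$ are perpendicular to $\Pi$. Applied independently to $P$ and $P'$, this places the six candidate facets into the three perpendicular planes through $e_1$, $e_2$, $e_3$, so each pair automatically coincides. To prove the lemma, let $\Pi_1$, $\Pi_2$, $\Pi_3$ be the planes of the three facets of $R$ adjacent to $F$. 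They cannot share a common projective line, since such a line would meet $\Pi$ at a point lying on all three edges of $T$, contradicting the non-concurrence of the sides of a triangle; hence $\Pi_1\cap\Pi_2\cap\Pi_3$ is a single point $v\in \mathbb RP^3$. The group of projective transformations of $\mathbb RP^3$ fixing $\Pi$ pointwise is $4$-dimensional, while requiring $v$ to be mapped to the point at infinity $p_\infty$ perpendicular to $\Pi$ is $3$ conditions, leaving a one-parameter family of choices. Any such transformation sends $\Pi_k$ to the plane spanned by the fixed edge $e_k$ and $p_\infty$, i.e.\ to the perpendicular plane through $e_k$.

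The main obstacle is to ensure that the chosen transformation carries $R$ onto a bounded convex polytope lying on the correct side of $\Pi$. Equivalently, the exceptional plane of the transformation (the preimage of the plane at infinity) must be disjoint from $R$. I would exploit the residual one-parameter freedom isolated above: as the parameter varies, the exceptional plane sweeps out a pencil, so by compactness of $R$ some choice places it strictly on the side of $\Pi$ opposite to $R$. For that choice the transformation is a diffeomorphism of an affine chart containing $R$ and preserves convexity, yielding the required normal form. Performing the construction independently on $P$ and $P'$ and taking the union as in the first paragraph yields the geometric polytope $Q$ of the statement.
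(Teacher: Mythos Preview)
Your overall strategy coincides with the paper's: both arguments locate the single projective point $v=\Pi_1\cap\Pi_2\cap\Pi_3$ (equivalently, the common intersection of the three edges of $R$ emanating from the vertices of $T$) and then push $v$ to the point at infinity orthogonal to $\Pi$, so that the three adjacent facets become perpendicular to $\Pi$ and the gluing is immediate. The paper does this in two looser steps --- first send $v$ to \emph{any} point at infinity, then fix everything affinely --- while you insist on fixing $\Pi$ pointwise throughout.

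That extra rigidity is exactly where your argument breaks. In homogeneous coordinates with $\Pi=\{z=0\}$, the projective maps fixing $\Pi$ pointwise are
\[
M=\begin{pmatrix}1&0&a&0\\0&1&b&0\\0&0&c&0\\0&0&d&1\end{pmatrix},\qquad c\neq 0,
\]
and the condition $M\,v=[0{:}0{:}1{:}0]$ with $v=[v_1{:}v_2{:}1{:}v_4]$ forces $a=-v_1$, $b=-v_2$, $d=-v_4$, leaving only $c$ free. The exceptional plane is $\{dz+w=0\}$, which depends solely on $d$; it is therefore \emph{the same} plane --- namely the plane through $v$ parallel to $\Pi$ --- for every value of the residual parameter. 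Your ``pencil sweep / compactness'' manoeuvre is vacuous.

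The repair is the observation the paper uses and you omit: the hypothesis $R\neq\Delta^3$ forces $v\notin R$. When $v$ lies on the $R$-side of $\Pi$ the four half-spaces of $\Pi,\Pi_1,\Pi_2,\Pi_3$ cut out a tetrahedron with base $T$ and apex $v$, so $R$ is contained in it and meets the parallel plane through $v$ only at $v\notin R$; when $v$ is at infinity or on the opposite side, disjointness from $R$ is immediate. Either way the forced exceptional plane already misses $R$, and your transformation exists --- but for this reason, not the one you gave. The cleaner route, as in the paper, is to drop the requirement of fixing $\Pi$ pointwise: simply separate $v$ from $R$ by a hyperplane, send that hyperplane to infinity, and then normalise by an affine map.
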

\begin{proof}
Consider the three edges of $P$ intersecting $F_i$ at vertices. Each pair of these edges lies in a plane. Therefore, either these edges are parallel, or they intersect at a common point.  In the latter case the point does not belong to $P$. For otherwise, $P=\Delta^3$. Then there is a projective transformation mapping this point to a point at infinity and $P$ to a bounded polytope. In the new polytope the three edges are parallel. By an affine transformation we can make them orthogonal to $F_i$ and this facet 
a regular triangle. Applying similar transformations to $P'$, and, perhaps, a reflection, we can glue the resulting polytopes along congruent facets following the combinatorial identification. 
\end{proof}

\begin{corollary}\label{3bplane}
For any family $\mathcal{F}$ of $3$-belts of a simple $3$-polytope $P$ there is a geometric realization of $P$ and a family of planes bijectively corresponding to belts such that the intersections of planes with $\partial P$ form a nested family of curves corresponding to $\mathcal{F}$.
\end{corollary}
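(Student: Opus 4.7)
The plan is to proceed by induction on $|\mathcal{F}|$. The base case $|\mathcal{F}|=0$ is trivial: any geometric realization of $P$ works. For the inductive step, I use Corollary \ref{3bnested} to note that $\mathcal{F}$ is nested, and Lemma \ref{NFC-lemma} to obtain pairwise disjoint middle lines $\{\gamma(\mathcal{B})\}_{\mathcal{B}\in\mathcal{F}}$ on $\partial P\cong S^2$. These curves partition $\partial P$ into finitely many regions whose dual graph is a tree, so I can pick a belt $\mathcal{B}\in\mathcal{F}$ adjacent to a leaf region; then one component $C_1(\mathcal{B})$ of $\partial P\setminus \gamma(\mathcal{B})$ contains no $\gamma(\mathcal{B}')$ with $\mathcal{B}'\in \mathcal{F}\setminus\{\mathcal{B}\}$.

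I cut $P$ along $\mathcal{B}$ by Construction \ref{con:bcut} to obtain $P_1$ (on the $C_1(\mathcal{B})$-side) and $P_2$. By Corollary \ref{Pibcor}, each remaining belt of $\mathcal{F}$ corresponds to a 3-belt of $P_2$, yielding a family $\mathcal{F}_2$ of size $|\mathcal{F}|-1$. The inductive hypothesis then provides a geometric realization of $P_2$ together with planes $\{\pi_{\mathcal{B}'}\}_{\mathcal{B}'\in\mathcal{F}_2}$ whose intersections with $\partial P_2$ form the corresponding nested family of curves. I will realize $P_1$ as some geometric polytope of the correct combinatorial type and apply Proposition \ref{3bpt} (after projective transformations of both pieces, under which each $\pi_{\mathcal{B}'}$ remains a plane and the combinatorial structure is preserved) to glue $P_1$ and $P_2$ along their triangular facets $F_1$ and $F_2$, obtaining a geometric realization of $P$. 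Letting $\pi_{\mathcal{B}}$ be the plane containing $F_1=F_2$, the cross-section $\pi_{\mathcal{B}}\cap P$ is exactly the triangle $F$, so $\pi_{\mathcal{B}}\cap \partial P=\partial F$: its three vertices lie in the relative interiors of the three edges of $\mathcal{B}$ in $P$, and its three sides lie in the relative interiors of the three (now planar, by Proposition \ref{3bpt}) facets of $\mathcal{B}$, giving exactly a middle line of $\mathcal{B}$.

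The main obstacle will be to show that each plane $\pi_{\mathcal{B}'}$ with $\mathcal{B}'\in\mathcal{F}_2$ still meets $\partial P$ in the correct middle line, rather than picking up an extra arc from the attached $P_1$. Lemma \ref{Piblemma} guarantees that $F$ belongs to no 3-belt of $P_2$, so the middle line in $P_2$ of the belt corresponding to $\mathcal{B}'$ avoids the closed triangle $F_2$ entirely; hence $F_2$ lies strictly in one open half-space $H_{\mathcal{B}'}$ of $\pi_{\mathcal{B}'}$. I resolve the obstacle by composing the gluing with an affine transformation of $P_1$ that scales it toward $F_1$ along the direction transverse to the gluing plane, with scaling ratio small enough that $P_1\subset \bigcap_{\mathcal{B}'\in\mathcal{F}_2} H_{\mathcal{B}'}$; this is possible because $\mathcal{F}_2$ is finite and each $H_{\mathcal{B}'}$ contains an open neighbourhood of the compact set $F_2$. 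Then $\pi_{\mathcal{B}'}\cap P_1=\varnothing$, so $\pi_{\mathcal{B}'}\cap \partial P$ equals $\pi_{\mathcal{B}'}\cap \partial P_2$ with the interior of $F_2$ (which becomes interior to $P$) removed, i.e.\ the middle line of $\mathcal{B}'$ in $P$. The resulting family of planar curves is pairwise disjoint --- $\gamma(\mathcal{B})$ lies on the gluing plane while the other curves lie on the $P_2$-side of $\partial P$ --- and each has the combinatorial structure required by Definition \ref{def:nsc}, so together they form a nested family of curves corresponding to $\mathcal{F}$.
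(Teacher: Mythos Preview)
Your argument is correct and follows the same route as the paper: cut $P$ along its nested family of $3$-belts and reassemble the pieces geometrically via Proposition~\ref{3bpt}. The paper performs the gluing all at once along the full tree and simply asserts that the resulting planar sections are disjoint; your inductive organization, together with the explicit scaling of $P_1$ into $\bigcap_{\mathcal{B}'} H_{\mathcal{B}'}$, supplies precisely the verification of that assertion that the paper leaves implicit.
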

\begin{proof}
Indeed, by Corollary \ref{3bnested} $\mathcal{F}$ is a nested family of belts. By Lemmas \ref{NFC-lemma} and \ref{lem:iso}
this family uniquely defines a decomposition of $P$ into a connected sum of polytopes along triangles. These operations may be encoded by a tree, where vertices correspond to polytopes and edges to pairs of identified triangles. Take any geometric realizations of these polytopes. Then starting from any vertex of the tree as a root and applying projective transformations using Proposition \ref{3bpt} we can glue one by one all the polytopes with respect to combinatorial identifications of their triangles. The resulting polytope is combinatorially equivalent to $P$, the identified triangles correspond to planar sections of $P$, and these sections are disjoint.
\end{proof}

For a connected sum along quadrangles situation is a little bit more complicated.
Let $P$ be a simple geometric $3$-polytope in $\mathbb R^3$, and $F_i$ be its quadrangular facet surrounded by a $4$-belt
$(F_j,F_k,F_l,F_r)$. Then any three of the four planes $\pi_j={\rm aff}(F_j)$, 
$\pi_k={\rm aff}(F_k)$, $\pi_l={\rm aff}(F_l)$, and $\pi_r={\rm aff}(F_r)$ 
do not contain the same line in $\mathbb RP^3$. There are two possibilities: either these planes contain a common point of $\mathbb RP^3$, or not. In the first case up to a projective transformation we can assume that this point lies at infinity. Then these four planes in $\mathbb R^3$ bound an infinite cylinder over $F_i$, which in $\mathbb R P^3$ together with one point 
is divided by $F_i$ into two $4$-pyramids. Let us call 
the pyramid containing $P$ an {\it inner pyramid} associated to $F_i$, and the other pyramid -- an {\it outer pyramid}.
The addition of the outer pyramid to $P$ transforms it to a polytope with combinatorially $F_i$ shrinked to a point.
In the second case the four planes are in general position in $\mathbb RP^3$ and up to a projective transformation 
these planes bound a simplex containing $P$.
The plane $\pi_i={\rm aff} (F_i)$ divides this simplex into two polytopes combinatorially equivalent to $3$-prisms. 
These prisms have a common quadrangle $F_i$ and are "twisted", that is bases of each prism correspond to
quadrangles of the other. Let us call the prism containing $P$ an {\it inner prism}
associated to $F_i$, and the other $3$-prism -- an {\it outer prism}. 
There are two possibilities: $F_j$ and $F_l$ correspond to bases either of the inner prism, 
or of the outer prism.
In the first case addition of the outer prism to the polytope transforms $P$ to a polytope with combinatorially 
$F_i$ shrinked to an edge "parallel" to $F_i\cap F_j$ and $F_i\cap F_l$, and in the second case -- to $F_i\cap F_k$ and 
$F_i\cap F_r$. Each of the three possibilities is a projective invariant of the geometric polytope $P$.

\begin{proposition}\label{4bpt}
Let $P$ and $P'$ be geometric simple $3$-polytopes, and $F_i$ and $F_j'$ be their quadrangular
facets surrounded by belts. Choose an identification of $F_i$ 
and $F_j'$ as combinatorial polygons. Then up to projective transformations $P$ and $P'$ may
be glued along congruent facets $F_i$ and $F_j'$ following the combinatorial identification 
to produce a geometric $3$-polytope $Q$
having a $4$-belt instead of these facets if and only if either both $F_i$ and $F_j'$ correspond to $4$-pyramids, or 
to $3$-prism, and the inner $3$-prisms are "twisted" under the identification of their quadrangles.
\end{proposition}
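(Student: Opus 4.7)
The plan is to reformulate the gluing condition as a requirement that the affine planes spanned by the corresponding belt facets of $P$ and (a projective image of) $P'$ coincide, and then to treat necessity and sufficiency of the two cases separately. First I would place $P$ in $\mathbb{R}^3$ and denote by $\pi_j,\pi_k,\pi_l,\pi_r$ the affine planes of the belt facets, each containing one edge of $F_i$. By the dichotomy described just before the proposition, these four planes either share a common point of $\mathbb{RP}^3$ (the $4$-pyramid case) or are in general position and bound a simplex $T$ that the plane of $F_i$ divides into the inner and outer $3$-prisms. Using a projective transformation, $F_j'$ may be brought onto $F_i$ realizing the prescribed identification; then $Q$ is a geometric polytope with the required $4$-belt if and only if the four belt planes of the transformed $P'$ coincide with $\pi_j,\pi_k,\pi_l,\pi_r$ on the opposite side of $F_i$ from~$P$.

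For necessity, I would read off the stated conditions from this matching. In the $4$-pyramid case for $P$, the four planes share an apex $A$, so the four belt planes of the transformed $P'$ share $A$ as well, forcing $F_j'$ to be in the $4$-pyramid case. In the $3$-prism case for $P$, the transformed $P'$ lies on the opposite side of the plane of $F_i$ from $P$ inside the four belt planes, hence in the outer prism of $P$. Since $P'$ lies in its own inner prism by definition, $P'$'s inner prism coincides with $P$'s outer prism. The base-edges of $P$'s outer prism on $F_i$ are precisely the pair of opposite edges of $F_i$ complementary to the base-edges of $P$'s inner prism. Translated back via the identification $F_i\leftrightarrow F_j'$, this says that the base-edges of $P'$'s inner prism, viewed as edges of $F_j'$, correspond to the pair of opposite edges of $F_i$ different from the base-edges of $P$'s inner prism; this is exactly the twisted condition.

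For sufficiency, assume the matching-case hypothesis. I would normalize $F_i$ and $F_j'$ to a common standard planar quadrangle (say, the unit square in the $xy$-plane) by projective transformations of $P$ and $P'$. Then the belt planes $\pi_j,\pi_k,\pi_l,\pi_r$ of $P$ are fixed, and it remains to produce a projective transformation fixing this standard quadrangle pointwise that moves the belt planes of $P'$ onto $\pi_j,\pi_k,\pi_l,\pi_r$ with $P'$ on the opposite side of the quadrangle from $P$. In the $4$-pyramid case each $\pi_\ast$ is determined by one edge of the quadrangle together with the apex, so the task reduces to moving $P'$'s apex onto $P$'s apex. In the twisted $3$-prism case, the outer prism of $P$ has two distinguished vertices of $T$ (the two vertices on the opposite side of $F_i$ from $P$), and one must align these with the corresponding pair of vertices of $P'$'s simplex; the twisted hypothesis ensures the combinatorial correspondence is right.

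The main obstacle is verifying that the stabilizer of the quadrangle in $PGL(4,\mathbb{R})$ has enough freedom to effect these alignments. Since a quadrangle in $\mathbb{RP}^3$ has $11$ parameters and $\dim PGL(4,\mathbb{R})=15$, the stabilizer is $4$-dimensional. Concretely, the affine subgroup $(x,y,z)\mapsto(x+Az,y+Bz,kz)$ is a $3$-parameter subgroup of the stabilizer that already acts transitively on the half-space $\{z>0\}$, mapping any $(a,b,c)$ with $c>0$ to $(0,0,1)$; adding the $1$-parameter projective family $(x,y,z)\mapsto\bigl(\tfrac{x}{1+\alpha z},\tfrac{y}{1+\alpha z},\tfrac{z}{1+\alpha z}\bigr)$ fills out the $4$-dimensional stabilizer. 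This suffices in both cases: in the $4$-pyramid case transitivity alone moves the apex, while in the prism case the additional projective parameter is needed to reposition the two opposite-side vertices of $T$ simultaneously.
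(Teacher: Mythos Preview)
Your necessity argument is correct and tracks the paper's reasoning closely: once the four belt planes of the transformed $P'$ coincide with those of $P$, the pyramid/prism dichotomy is forced to match, and in the prism case the inner prism of $P'$ must be the outer prism of $P$, which is precisely the twisted condition.

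Your sufficiency argument, however, diverges from the paper and is not complete as written. The paper does not compute the stabilizer of the quadrangle. In the pyramid case it normalizes each polytope separately so that the quadrangle becomes a square \emph{orthogonal} to the four adjacent facets (i.e., apex at infinity in the normal direction); then gluing is immediate up to a reflection. In the prism case it invokes two classical facts: all combinatorial $3$-prisms are projectively equivalent (\cite[Exercise~6.24]{Z95}), and the isometry group of a right regular triangular prism acts transitively on pairs (quadrangular face, lateral edge). These together furnish a projective map sending the inner prism of $F_i$ onto the outer prism of $F_j'$ respecting the identification; no parameter count is needed.

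Your stabilizer approach can be made to work, but the last paragraph does not yet do so. Two points need attention. First, in the pyramid case the apex need not lie in a fixed half-space: it can be on either side of $F_i$ or at infinity, so transitivity on $\{z>0\}$ is not enough; you must also use the sign of $k$ and the projective parameter $\alpha$ to reach arbitrary points of $\mathbb{RP}^3\setminus\{z=0\}$. Second, in the prism case you claim the four parameters suffice to ``reposition the two opposite-side vertices simultaneously,'' but this is six scalar conditions on a four-dimensional group, and a dimension count alone is not a transitivity proof. What actually needs to be checked is that the stabilizer acts transitively on quadruples of planes through the four edges of the square with no common point. Parametrizing such planes by four slopes $(a_j,a_k,a_l,a_r)$ and computing the action of your $(A,B,k,\alpha)$ shows that matching slopes yields a linear system with unique solution provided $(a_l-a_j)-(a_r-a_k)\neq0$, and that this quantity vanishes exactly in the pyramid case. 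Once you write that computation down, your argument is complete; as stated it is a sketch rather than a proof.
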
  
\begin{proof}
From the above arguments we see that if after projective transformation we glue the polytopes along
congruent facets, then either both facets $F_i$ and $F_j'$ correspond to $4$-pyramids, and the inner $4$-prism of one polytope 
is identified with the outer $4$-prism of the other, or both facets correspond to $3$-prism, and the inner  
$3$-prism of one polytope is identified with the outer $3$-prism of the other polytope. In particular, the inner $3$-prism are twisted under the identification.
On the other hand, if $F_i$ and $F_j'$ correspond to $4$-pyramids, then there are projective transformations making these facets congruent squares orthogonal to adjacent facets, and we can glue the polytopes following the combinatorial identification of facets, perhaps after a reflection of one of the polytopes.
If $F_i$ and $F_j'$ correspond to twisted $3$-prisms, then using the classical facts that any two combinatorial $3$-prisms are projectively equivalent \cite[Exercise 6.24]{Z95} and the group of symmetries of a right triangular prism with regular bases acts transitively on pairs (a quadrangle, its edge orthogonal to bases), we can find a projective  transformation mapping the inner $3$-prism associated to $F_i$ to the outer $3$-prism associated to $F_j'$ and respecting the combinatorial identification of these facets. This finishes the proof. 
\end{proof}

\begin{corollary}\label{4bplane}
For any nested family $\mathcal{F}$ of $4$-belts of a simple $3$-polytope $P$ there is a geometric realization of $P$ and a family of planes bijectively corresponding to belts such that the intersections of planes with $\partial P$ form a nested family of curves corresponding to $\mathcal{F}$.
\end{corollary}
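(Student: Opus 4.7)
The plan is to follow the scheme of the proof of Corollary \ref{3bplane}, replacing Proposition \ref{3bpt} by Proposition \ref{4bpt}, and arranging that the hypothesis of the latter is satisfied at every gluing step. First I would apply Lemmas \ref{NFC-lemma} and \ref{lem:iso} together with Remark \ref{famcut} to cut $P$ along $\mathcal{F}$ into simple $3$-polytopes $Q_1,\dots,Q_N$ organized in a tree $T$ whose edges correspond to the belts of $\mathcal{F}$. Each edge of $T$ incident to $Q_\alpha$ designates a distinguished quadrangular facet of $Q_\alpha$ obtained as the cap of the cut, and by Construction \ref{con:bcut} this facet is surrounded by a $4$-belt of $Q_\alpha$. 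The distinguished quadrangles within a single $Q_\alpha$ are pairwise disjoint because they arise from the pairwise disjoint curves of the nested family on $\partial P$.

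The key extra ingredient compared with Corollary \ref{3bplane} is the matching condition in Proposition \ref{4bpt}. I would handle it by realizing each $Q_\alpha$ so that every distinguished quadrangle is of $4$-pyramid type, which places each potential gluing in the first alternative of Proposition \ref{4bpt}. To produce such a realization of $Q_\alpha$ I would simultaneously shrink all its distinguished quadrangles to points, obtaining a (non-simple) $3$-polytope $Q'_\alpha$ with a $4$-valent vertex in place of each former distinguished quadrangle. This operation is the inverse of cutting off a $4$-valent vertex; it preserves $3$-connectedness of the $1$-skeleton because the shrunk quadrangles are pairwise disjoint and each is surrounded by a $4$-belt, so the Steinitz theorem yields a geometric realization of $Q'_\alpha$. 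Cutting each $4$-valent vertex by a nearby transverse plane recovers a geometric realization of $Q_\alpha$ in which the four facets adjacent to any distinguished quadrangle all pass through the former vertex point, i.e.\ every distinguished quadrangle is of $4$-pyramid type.

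Next I would root $T$ at an arbitrary vertex and glue the pieces inductively along the edges of $T$. At every gluing the two quadrangles being identified are of $4$-pyramid type (the one on the new piece by the above construction; the one on the already constructed polytope because the four adjacent facets lie in a single earlier piece whose supporting planes remain unchanged by subsequent gluings on other branches of $T$). Hence Proposition \ref{4bpt} supplies a projective transformation of the new piece identifying the two quadrangles as congruent polygons, and the gluing can be carried out. After $N-1$ such steps we obtain a geometric realization of $P$ together with the required family of planes, and by Lemma \ref{lem:iso} their intersections with $\partial P$ form a nested family of curves corresponding to $\mathcal{F}$.

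The main obstacle is precisely to guarantee that the hypothesis of Proposition \ref{4bpt} holds uniformly at every gluing. The Steinitz-based shrinking trick accomplishes this in a single stroke for each piece and avoids the more delicate $3$-prism-with-twisting alternative, which would otherwise have to be tracked consistently across the whole tree $T$.
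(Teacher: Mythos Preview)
Your strategy has a genuine gap at the shrinking step. You claim that for each piece $Q_\alpha$ one can simultaneously shrink all distinguished quadrangles to points and obtain a $3$-polytope $Q'_\alpha$; this fails in general. Concretely, take $P$ to be the $8$-prism with sides $S_1,\dots,S_8$ and top/bottom $T,B$, and let $\mathcal{F}$ consist of the two compatible $4$-belts $\mathcal{B}_a=(S_1,T,S_4,B)$ and $\mathcal{B}_b=(S_1,T,S_6,B)$. The middle piece $Q$ has two caps $F_a,F_b$ and the facet $S_1$-part of $Q$ is a quadrangle whose four edges lie on $T$, $F_a$, $B$, $F_b$ in this cyclic order. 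Shrinking $F_a$ to a point turns $S_1$-part into a triangle; shrinking $F_b$ as well turns it into a digon, so $Q'$ is not a polytope. Thus you cannot arrange all distinguished quadrangles of $Q_\alpha$ to be of $4$-pyramid type simultaneously, and your induction breaks: once $Q_\alpha$ is glued in, its supporting planes are fixed, so any remaining distinguished quadrangle of $Q_\alpha$ that is \emph{not} of pyramid type can never be matched by the pyramid clause of Proposition~\ref{4bpt}.

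The paper avoids this by using the $3$-prism alternative of Proposition~\ref{4bpt} instead. It perturbs each realization to general position, so that \emph{every} quadrangle is automatically of $3$-prism type; the only datum to control is the twist of the inner prism at the single quadrangle currently being glued. Since that quadrangle is surrounded by a belt, Steinitz allows it to be shrunk to an edge in either of two ways, and cutting back yields the two possible twists; one of them matches. This one-quadrangle-at-a-time control is exactly what sidesteps the simultaneous-realization obstruction that breaks your approach.
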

\begin{proof}
By Lemmas \ref{NFC-lemma} and \ref{lem:iso} the family $\mathcal{F}$ uniquely defines a decomposition of $P$ into 
a connected sum of polytopes along quadrangles. These operations may be encoded by a tree $T$, where vertices correspond 
to polytopes and edges to pairs of identified quadrangles. Choose any vertex of $T$ as a root and take any geometric
realization of the corresponding  polytope $Q$. Since $Q$ is simple, there is an arbitrary small deformation of its 
defining inequalities such that the new realization has the same combinatorial type and all the planes containing 
facets are in general position,  that is there is no point in $\mathbb R^3$ lying in more than three of these planes. 
Then any quadrangle of $Q$ corresponds to its inner $3$-prism. 
Consider the edge of $T$ corresponding to an identification of a quadrangle $F_i$ of 
$Q$ with a quadrangle $F_j'$ of another polytope $R$. According to Proposition \ref{4bpt} to make this identification 
geometric after projective transformations one needs to realize $R$ in such a way that the inner $3$-prism of $R$ is twisted 
under the identification. Again the realization may be deformed to be in general position. Since $F_j'$ is surrounded by a belt, 
the Steinitz theorem implies that both polytopes arising from $R$ by shrinking $F_j'$ to an edge in two possible ways exist. 
Cutting this edge we obtain two realizations of $R$ with the inner $3$-prisms of $F_j'$ twisted under
identification. So one of these realizations fits our condition. Repeating this argument for each new edge of $T$ in the end we 
obtain a polytope combinatorially equivalent to $P$. In this polytope the identified quadrangles correspond to planar sections and these sections are disjoint.
\end{proof}

\section{Prime decomposition for manifolds corresponding to vector-colourings}\label{Sec:prime}
\subsection{General information}

It is important, that in dimension $3$ there is no difference between continuous, smooth and piecewise-linear categories. 
Namely, it is known that  any topological $3$-manifold (not necessarily compact) admits a unique piecewise-linear structure and 
a unique smooth structure. A topological action of
a finite group on a closed $3$-manifold $N$ is smoothable if and only if it is
simplicial in some triangulation of $N$.

\begin{definition}
By a topological $2$-submanifold of a closed topological $3$-manifold $N$
we mean a subset $S$ of $N$ such that for any point $p\in N$ there exists a homeomorphism $j\colon U\to V$ 
from an open neighbourhood of $p$ in $N$ to an open subset of $\mathbb R^3$ 
such that $j(U\cap S)\subset\{0\}\times \mathbb R^2$.
\end{definition}

It is essential that one can replace topological $2$-submanifolds by $PL$-submanifolds and by
smooth submanifolds. In particular, if $N$ is a topological compact $3$-manifold and 
$S$ is a topological $2$-submanifold of $N$, then there exists a smooth (or $PL$) structure on 
$N$ and a smoothly embedded surface $S'$ in $N$ that is isotopic to $S$ (see \cite[Section 1.1]{AFW15}).

\begin{definition}
By $\nu(S)$ we denote a tubular neighbourhood of the submanifold $S$
in $N$. Given a surface $S$ in $N$ we refer to $N\setminus\nu(S)$ as $N$ cut along $S$. 
\end{definition}

In the case of manifolds corresponding to vector-colourings  there are natural structures of piecewise-linear manifolds and submanifolds. 

\begin{definition}
Let $N_1$ and $N_2$ be oriented $3$-manifolds and let $B_i\subset N_i$, $i=1,2$, be embedded closed
$3$-balls with boundaries $2$-submanifolds homeomorphic to $S^2$.
For i = 1,2 we endow $\partial B_i$ with the orientation induced from the orientation
as a boundary component of $N_i\setminus\,{\rm  int} B_i$. We pick an orientation-reversing homeomorphism
$f \colon \partial B_1 \to \partial B_2$ and we refer to
$N_1\#N_2 \colon= (N_1\setminus {\rm int}\,B_i)\cup_f (N_2\setminus {\rm int}\,B_2)$
as the connected sum of $N_1$ and $N_2$. The homeomorphism type of $N_1\#N_2$ does not
depend on the choice of $f$. 
\end{definition}

\begin{definition} An orientable $3$-manifold $N$ is called {\it prime} if it cannot be decomposed
as a nontrivial connected sum of two manifolds, that is, if $N = N_1\#N_2$, then $N_1 = S^3$
or $N_2 = S^3$. A $3$-manifold $N$ is called {\it irreducible} if every its $2$-submanifold homeomorphic to 
a $2$-sphere bounds a subset in $N$ homeomorphic to a closed $3$-ball.
\end{definition}

An orientable, irreducible $3$-manifold is evidently prime. It can be proved that $S^2\times S^1$ is prime, but not irreducible. 
Conversely, if $N$ is a closed orientable prime $3$-manifold, then either
$N$ is irreducible or $N = S^2\times S^1$. 
It is a consequence of the generalized Schoenflies Theorem 
that $S^3$ is irreducible. If $p\colon\widetilde{M}\to M$ is a covering space and $\widetilde{M}$ is irreducible, then $M$ is also irreducible (see \cite[Proposition 1.6]{H07}). In particular, $\mathbb RP^3$ is an irreducible manifold.

The following theorem is due to Kneser (1929), 
Haken (1961), 
and Milnor (1962). 
\begin{theorem}[Prime Decomposition Theorem] {\rm (see \cite[Theorem 1.2.1]{AFW15})} Let $N$ be a compact, oriented $3$-manifold 
with no spherical boundary components. There exist oriented prime $3$-manifolds 
$N_1$, \dots, $N_m$ such that $N=N_1\#\dots\#N_m$. Moreover, if
$N=N_1\#\dots\#N_m$, and $N=N_1'\#\dots\#N_n'$, 
with oriented prime $3$-manifolds $N_i$ and $N_i'$, then $m=n$ and (possibly after reordering)
there exist orientation-preserving homeomorphisms $N_i\to N_i'$.
\end{theorem}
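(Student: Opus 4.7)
The plan is to prove the theorem in two parts, following the classical Kneser--Haken--Milnor strategy: existence via iterative splitting along essential $2$-spheres (with termination ensured by Kneser's lemma), and uniqueness via an innermost-circle argument on the intersections of two competing sphere systems.

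For existence, I would argue that whenever $N$ contains an embedded $2$-sphere $S$ that does not bound a $3$-ball, one cuts $N$ along $S$ and caps off each of the two resulting $S^2$ boundary components by a $3$-ball, writing $N = N' \# N''$ with both factors strictly simpler than $N$. Iterating yields a prime decomposition, provided the process terminates. Termination is the main obstacle. Fix a triangulation $T$ of $N$ with $t$ tetrahedra, and let $\Sigma = S_1 \sqcup \dots \sqcup S_k$ be any family of disjoint, pairwise non-parallel essential $2$-spheres in $N$. By Haken's normal surface theory one may isotope $\Sigma$ into normal form with respect to $T$, so that inside each tetrahedron $\Sigma$ appears as a disjoint union of normal triangles and quadrilaterals of only seven combinatorial types. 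When $k$ is sufficiently large compared with $t$, a pigeonhole argument produces many parallel normal discs in some tetrahedron; chaining such parallel slabs across neighbouring tetrahedra yields an $S^2 \times I$ region cobounded by two of the spheres, contradicting non-parallelism. Thus $k$ is bounded in terms of $t$, the iterative splitting terminates, and one obtains a decomposition into prime pieces.

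For uniqueness, suppose $N = N_1 \# \dots \# N_m = N_1' \# \dots \# N_n'$ are two prime decompositions, realised by disjoint families $\Sigma$ and $\Sigma'$ of essential spheres in $N$ (those along which one cuts and caps off to recover the prime factors). Isotope them into general position, so that $\Sigma \cap \Sigma'$ is a disjoint union of circles. Choose an innermost such circle on $\Sigma'$: it bounds a disc $D \subset \Sigma'$ whose interior misses $\Sigma$, and $D$ together with one of the two discs it cuts off from a sphere of $\Sigma$ forms an embedded $2$-sphere $S$ in the interior of some prime piece $N_i$. Primeness forces either $S$ to bound a $3$-ball (giving an isotopy that strictly reduces the number of intersection circles) or $N_i \cong S^2 \times S^1$, and in the latter case a direct argument using that every essential sphere in $S^2 \times S^1$ is isotopic to a fibre $S^2 \times \{\mathrm{pt}\}$ completes the reduction. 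Once $\Sigma \cap \Sigma' = \emptyset$, the spheres of each family lie inside prime pieces determined by the other, and a pairwise matching of factors (keeping track of the induced orientations on the boundary spheres) yields $m = n$ together with the required orientation-preserving homeomorphisms $N_i \cong N_i'$. The chief technical subtleties lie in the termination argument and in the $S^2 \times S^1$ case, where an essential sphere may fail to bound a ball.
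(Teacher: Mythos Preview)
The paper does not prove this theorem. It is stated as background, with the attribution ``due to Kneser (1929), Haken (1961), and Milnor (1962)'' and a citation to \cite[Theorem~1.2.1]{AFW15}, and is then used as a black box in the analysis of manifolds $M(P,\Lambda)$. So there is no proof in the paper to compare your proposal against.

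That said, your outline is a reasonable sketch of the classical Kneser--Haken--Milnor argument, and the two-part structure (Kneser finiteness for existence, innermost-circle surgery for uniqueness) is the standard one recorded in the references the paper cites. If you intend this as an actual proof rather than a plan, be aware that several steps are only gestured at: the pigeonhole bound on normal spheres needs the full argument that parallel normal pieces globally assemble into product regions (not just locally), and the uniqueness endgame after $\Sigma\cap\Sigma'=\varnothing$ requires more care than ``a pairwise matching of factors'' --- one typically argues inductively, showing that each sphere of $\Sigma'$ either bounds a ball in some $N_i$ or is parallel to a sphere of $\Sigma$, and handling the non-separating case. But for the purposes of this paper none of that is needed: the theorem is simply quoted.
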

\subsection{The case of manifolds defined by vector-colourings}

Denote by $X^{\#k}$ the connected sum $\underbrace{X\#\dots\#X}_k$.
\begin{proposition}\label{CSprop}
Let $P$ be a connected sum of two simple $3$-polytopes $P_1$ and $P_2$ along vertices, and $M(P,\Lambda)$
be an orientable manifold defined by a vector-colouring $\Lambda$ of rank $r$. Then there is a homeomorphism
$$
M(P,\Lambda)\simeq M(P_1,\Lambda_1)^{\#2^{r-r_1}}\#M(P_2,\Lambda_2)^{\#2^{r-r_2}}\#(S^2\times S^1)^{\#\left[2^{r-3}-2^{r-r_1}-2^{r-r_2}+1\right]},
$$
where $\Lambda_1$ and $\Lambda_2$ are induced vector-colourings of ranks $r_1$ and $r_2$ respectively.
\end{proposition}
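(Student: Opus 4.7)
The plan is to cut $M(P, \Lambda)$ along an explicit family of embedded $2$-spheres arising from the $3$-belt $\mathcal{B} = (F_{i_1}, F_{i_2}, F_{i_3})$ produced in $P$ by the vertex connected sum, identify the resulting pieces as punctured copies of $M(P_1,\Lambda_1)$ and $M(P_2,\Lambda_2)$, and reassemble them via a standard graph-of-sphere-gluings lemma. First, I observe that orientability of $M(P, \Lambda)$ together with Proposition \ref{orprop} forces $\Lambda_{i_1}, \Lambda_{i_2}, \Lambda_{i_3}$ to be linearly independent in $\mathbb{Z}_2^r$: expanding each in a fixed basis $\Lambda_{j_1}, \dots, \Lambda_{j_r}$, each expansion has odd length, so a hypothetical relation $\Lambda_{i_1}+\Lambda_{i_2}+\Lambda_{i_3}=0$ would require a basis-vector multiset of simultaneously odd and even total cardinality, while the two-term and one-term relations are excluded by condition~(*) at the edges of $\mathcal{B}$. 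Set $V := \langle \Lambda_{i_1}, \Lambda_{i_2}, \Lambda_{i_3}\rangle \cong \mathbb{Z}_2^3$.

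By Corollary \ref{3bplane} realize $P$ geometrically so that $\gamma(\mathcal{B})$ is a planar cross-section, bounding an embedded disk $D \subset P$ that separates $P$ into the two vertex-cut pieces $\widetilde{P_1}, \widetilde{P_2}$. In $M(P, \Lambda)$ the image of $D \times \mathbb{Z}_2^r$ breaks into components indexed by cosets of $V$: within one coset the eight disks glue along their boundary arcs (the arc on $F_{i_j}$ pairs copies differing by $\Lambda_{i_j}$, while each corner glues copies differing by any pair of generators), producing a closed CW-surface with $8$ faces, $12$ edges and $6$ vertices, hence $\chi=2$. Linear independence of the $\Lambda_{i_j}$ makes the disk-gluing graph connected, so each component is a connected closed surface of Euler characteristic $2$, i.e.\ an $S^2$. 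This yields $2^{r-3}$ pairwise disjoint embedded $2$-spheres $\{S_\alpha\}$, $\alpha \in \mathbb{Z}_2^r/V$.

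Next I identify the pieces of $M(P,\Lambda)$ cut open along these spheres. Let $V_k \subseteq \mathbb{Z}_2^r$ denote the span of the colours of the facets of $P_k$, so $\dim V_k = r_k$ and $V \subseteq V_k$. Because gluings across $P_k$-facets preserve $V_k$-cosets, the $\widetilde{P_k}$-halves of copies of $P$ organise into $2^{r-r_k}$ $V_k$-orbits of size $2^{r_k}$. Each such orbit assembles, under the identifications inherited from $M(P,\Lambda)$, into a manifold homeomorphic to $M(P_k, \Lambda_k)$ with $2^{r_k-3}$ open $3$-balls removed, namely small balls around the $2^{r_k-3}$ orbits of the vertex $v \in P_k$ (each $v$-orbit has size $|V|=8$, and the boundary sphere of such a ball is precisely the corresponding restriction of some $S_\alpha$). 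Consistency check: $2^{r-r_1}\cdot 2^{r_1-3} + 2^{r-r_2}\cdot 2^{r_2-3} = 2\cdot 2^{r-3}$ matches twice the number of spheres.

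Finally, encode the gluing data as a bipartite graph $G$ with $n := 2^{r-r_1}+2^{r-r_2}$ vertices (the punctured pieces) and $t := 2^{r-3}$ edges (the spheres). Connectedness of $M(P,\Lambda)$ forces $G$ to be connected, so $b_1(G) = t - n + 1 = 2^{r-3} - 2^{r-r_1} - 2^{r-r_2} + 1$. The standard lemma on reconstructing a closed oriented $3$-manifold from ball-punctured summands glued along $2$-spheres --- proven by induction, contracting a spanning tree of $G$ to collapse each tree-edge into an ordinary connected sum of its endpoints and observing that each remaining non-tree edge glues a manifold to itself along an $S^2$ and thereby contributes one factor of $S^2\times S^1$ --- then yields the desired formula. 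The main obstacle is verifying in the third step that each $V_k$-orbit of $\widetilde{P_k}$-halves genuinely assembles to $M(P_k,\Lambda_k)$ minus $2^{r_k-3}$ balls; this requires identifying $V_k$ with $\mathbb{Z}_2^{r_k}$ via $\Lambda_k$ and matching the inherited equivalence relation on $\widetilde{P_k}\times V_k$ with the defining equivalence of $M(P_k,\Lambda_k)$.
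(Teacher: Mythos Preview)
Your proof is correct and follows essentially the same strategy as the paper's: show linear independence of the three belt colours via orientability, exhibit $2^{r-3}$ embedded $2$-spheres in $M(P,\Lambda)$ assembled from copies of a triangle transverse to the belt, identify the complementary pieces as $2^{r-r_k}$ punctured copies of $M(P_k,\Lambda_k)$, and count handles. The only notable difference is in the final bookkeeping: the paper tracks explicitly which boundary spheres of a given $N_1$-piece are shared with the same or with different $N_2$-pieces via the coset structure $\pi_i/(\pi_1\cap\pi_2)$, whereas you bypass this by encoding the gluing as a bipartite graph and reading off the number of $S^2\times S^1$ summands as $b_1(G)=t-n+1$, using only connectedness of $M(P,\Lambda)$. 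Your formulation is a modest streamlining of the same argument.
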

\begin{remark}
In the non-orientable case the statement of Proposition \ref{CSprop} is not valid. The reason is that the vectors $\Lambda_i$,
$\Lambda_j$ and $\Lambda_k$ corresponding to the facets $F_i$, $F_j$ and $F_k$ of a $3$-belt may be linearly dependent.
Then $\Lambda$ restricted to $P_1$ and $P_2$ does not satisfy condition (*).
For example, $\Delta^2\times I=\Delta^3\#\Delta^3$. Define $\Lambda$ to be $(1,0,0)$ on bases of the $3$-prism, 
and $(0,1,0)$, $(0,0,1)$, $(0,1,1)$ on side facets. 
\end{remark}
\begin{proof}[Proof of Proposition \ref{CSprop}]
Consider the $3$-belt $\mathcal{B}=(F_i,F_j,F_k)$ of $P$ 
corresponding to the connected sum $P=P_1\#P_2$. Since the facets of the belt are pairwise adjacent, the vectors $\Lambda_i$, $\Lambda_j$ and $\Lambda_k$ are pairwise different. If $\Lambda_i+\Lambda_j+\Lambda_k=0$, 
then $\Lambda_k=\Lambda_i+\Lambda_j$, and $M(P,\Lambda)$ is not orientable, which is a contradiction.
Thus, $\Lambda_i$, $\Lambda_j$, and $\Lambda_k$ are linearly independent. In particular, the
induced mappings $\Lambda_1$ and $\Lambda_2$ for $P_1$ and $P_2$ satisfy condition (*) and are
vector-colourings. 

Let $\pi_i$ be the linear hull of vectors $\Lambda_j\in V=\mathbb Z_2^r$ corresponding to facets of the polytope
$P_i$. Then $\pi_1+\pi_2=V$ and $r_i=\dim \pi_i$.
The $3$-dimensional subspace 
$\pi_{\mathcal{B}}=\langle\Lambda_i,\Lambda_j,\Lambda_k\rangle$ corresponding to $\mathcal{B}$ lies in $\pi_1\cap\pi_2$.

In $M(P,\Lambda)$ the belt $\mathcal{B}$ corresponds to $2^r$ copies of the triangle bounded in $P$ by the middle line of  $\mathcal{B}$. These triangles form $2^{r-3}$ disjoint $2$-spheres. Each sphere is glued of $8$ triangles corresponding to  polytopes $P\times a$ with $a$ lying in the same coset $x+\pi_\mathcal{B}\in V/\pi_{\mathcal{B}}$. 
For the polytopes $P_1$ and $P_2$ each vertex used for the connected sum corresponds to a 
tetrahedron -- the convex hull of this vertex and midpoints
of the three edges incident to the vertex. 
In $M(P_i,\Lambda_i)$ this corresponds to a disjoint set of $2^{r_i-3}$
balls corresponding to polytopes $P_i\times a$ with $a$ lying in the same coset in $\pi_i/\pi_{\mathcal{B}}$. 
Each ball is glued of $8$ tetrahedra.  
In $M(P,\Lambda)$ we have $2^{r-r_i}$ copies of the manifold $N_i$ obtained from $M(P_i,\Lambda_i)$ 
by the deletion of these balls. Each copy is 
glued of parts of  polytopes $P_i\times a$ with $a$ lying in the same coset in $V/\pi_i$ and is bounded by $2^{r_i-3}$ 
spheres.
The boundary spheres of the same copy of $N_i$ 
corresponding to the same coset in 
$\pi_i/(\pi_1\cap\pi_2)$ are also boundary components of the same copy of  
$N_{2-i}$. The spheres corresponding to different cosets in $\pi_i/(\pi_1\cap\pi_2)$
are boundary components of different copies of $N_{2-i}$. Indeed, if the copy of $N_i$ 
is glued of parts of $P_i\times a$ with $a\in x+\pi_i$ and the spheres are glued of triangles lying in the polytopes
$P_i\times b$, where $b\in x+(y+\pi_{\mathcal{B}})$ for the first sphere and $x+(z+ \pi_{\mathcal{B}})$ for the second, and
$y-z\notin \pi_1\cap\pi_2$, then $(x+y)-(x+z)=y-z\not\in\pi_{2-i}$. For otherwise, $y-z\in \pi_1\cap \pi_2$.
If we choose one copy of $N_1$ and one sphere for each coset $\pi_1/(\pi_1\cap\pi_2)$, and one copy of $N_2$ and one sphere for each coset $\pi_2/(\pi_1\cap\pi_2)$ including a unique sphere chosen for $N_1$ and corresponding to $N_2$, then 
these spheres correspond to a connected sum $M(P_1,\Lambda_1)^{\#2^{r-r_1}}\#M(P_2,\Lambda_2)^{\#2^{r-r_2}}$. 
There are $2^{r-r_1}+2^{r-r_2}-1$ such spheres. Each other 
sphere corresponds to an addition of a handle, which is equivalent to a connected sum with $S^2\times S^1$. 
There are $2^r-2^{r-r_1}-2^{r-r_2}+1$
such spheres. This finishes the proof.
\end{proof}

\begin{corollary}\label{RZPcor}
Let $P$ be a connected sum of two simple $3$-polytopes $P_1$ and $P_2$ along vertices. Then there is a homeomorphism
$$
\mathbb R\mathcal{Z}_P\simeq\mathbb{R} \mathcal{Z}_{P_1}^{\#2^{m-m_1}}\#\mathbb{R} \mathcal{Z}_{P_2}^{\#2^{m-m_2}}\#(S^2\times S^1)^{\#\left[(2^{m-m_1}-1)\cdot(2^{m-m_2}-1)\right]},
$$
where $m_i$ is the number of facets of the polytope $P_i$, $i=1,2$.
\end{corollary}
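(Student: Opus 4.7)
The plan is to derive the corollary directly from Proposition \ref{CSprop} by specializing it to the identity vector-colouring. Namely, $\RZ_P = M(P,E)$ where $E\colon\{F_1,\dots,F_m\}\to \Z_2^m$ sends each $F_i$ to the basis vector $e_i$; this is a vector-colouring of rank $r=m$, and the resulting manifold $\RZ_P$ is orientable (as already recorded in Section \ref{Sec:RMAM}), so Proposition \ref{CSprop} is applicable.

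First I would analyse the combinatorics. Forming $P = P_1\#_{v,w}P_2$ cuts off vertices $v\in P_1$ and $w\in P_2$ and identifies the resulting triangular faces, so the three facets of $P_1$ through $v$ merge with the three facets of $P_2$ through $w$ into the three facets of the new $3$-belt of $P$. Hence $m = m_1+m_2-3$. The restriction of $E$ to the facets of $P_i$ is a vector-colouring whose image consists of $m_i$ distinct basis vectors of $\Z_2^m$ and therefore spans a subspace of dimension exactly $m_i$; so its rank is $r_i = m_i$, and identifying this image with $\Z_2^{m_i}$ recovers the identity colouring on $P_i$. In particular $M(P_i,E_i)\simeq \RZ_{P_i}$ under this natural identification of quotients.

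Substituting $r=m$ and $r_i = m_i$ into the conclusion of Proposition \ref{CSprop} yields
$$
\RZ_P \simeq \RZ_{P_1}^{\#2^{m-m_1}}\#\RZ_{P_2}^{\#2^{m-m_2}}\#(S^2\times S^1)^{\#[2^{m-3}-2^{m-m_1}-2^{m-m_2}+1]}.
$$
The only remaining step is a short arithmetic check to match the $S^2\times S^1$ exponent with the stated one. Using $m-3 = m_1+m_2-6 = (m-m_1)+(m-m_2)$, one expands
$$
(2^{m-m_1}-1)(2^{m-m_2}-1) = 2^{(m-m_1)+(m-m_2)} - 2^{m-m_1} - 2^{m-m_2} + 1 = 2^{m-3} - 2^{m-m_1} - 2^{m-m_2} + 1,
$$
which matches the exponent appearing above. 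I do not foresee any genuine obstacle: the statement is essentially a direct specialization of Proposition \ref{CSprop}, with the combinatorial count $m = m_1 + m_2 - 3$ and the identification $M(P_i,E_i)\simeq \RZ_{P_i}$ as the only non-formal inputs, both of which are immediate from the definitions.
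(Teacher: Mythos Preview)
Your proposal is correct and matches the paper's intended derivation: the corollary is stated immediately after Proposition~\ref{CSprop} with no separate proof, so the implicit argument is precisely the specialization to $\Lambda=E$ (hence $r=m$, $r_i=m_i$) together with the facet count $m=m_1+m_2-3$ and the arithmetic identity you checked.
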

\begin{corollary}\label{sccor}
Let $P$ be a connected sum of two simple $3$-polytopes $P_1$ and $P_2$ along vertices. 
And let $M(P,\Lambda)$ be an orientable small cover. Then there is a homeomorphism
$$
M(P,\Lambda)\simeq M(P_1,\Lambda_1)\#M(P_2,\Lambda_2),
$$
where $\Lambda_1$ and $\Lambda_2$ are characteristic functions for $P_1$ and $P_2$ restricted from $\Lambda$.
\end{corollary}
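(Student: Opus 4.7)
The plan is to derive this as a direct specialization of Proposition \ref{CSprop}. Since $M(P,\Lambda)$ is a small cover, we have $r=n=3$. To apply Proposition \ref{CSprop} I need to identify the ranks $r_1$ and $r_2$ of the induced vector-colourings $\Lambda_1$ and $\Lambda_2$ on $P_1$ and $P_2$.

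First I would show that $r_1=r_2=3$. The three facets $F_i,F_j,F_k$ forming the $3$-belt $\mathcal{B}$ arising from the connected sum correspond, after shrinking the new triangle in $P_i$ to the original vertex $v$ (respectively $w$), to three facets meeting at that vertex of $P_i$. By condition $(*)$ applied to that vertex of $P_i$, the vectors $\Lambda_i,\Lambda_j,\Lambda_k$ are linearly independent, so the span of the images of the facets of $P_i$ contains a $3$-dimensional subspace; since everything lives in $\mathbb Z_2^3$, we get $r_1=r_2=3$. This also matches the observation in the proof of Proposition \ref{CSprop} that orientability forces $\Lambda_i,\Lambda_j,\Lambda_k$ to be linearly independent, so both induced colourings satisfy $(*)$ and are characteristic functions, i.e.\ genuine small-cover colourings.

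Now I would substitute $r=r_1=r_2=3$ into the formula of Proposition \ref{CSprop}. The exponents become $2^{r-r_1}=2^{r-r_2}=2^{0}=1$, and the number of $S^2\times S^1$ summands equals
\[
2^{r-3}-2^{r-r_1}-2^{r-r_2}+1 \;=\; 1-1-1+1 \;=\; 0.
\]
So the formula collapses to $M(P,\Lambda)\simeq M(P_1,\Lambda_1)\# M(P_2,\Lambda_2)$, which is exactly the claim.

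There is no real obstacle: the only nontrivial point is checking that the induced colourings $\Lambda_1,\Lambda_2$ are themselves characteristic functions of rank $3$, and this is immediate from orientability plus condition $(*)$. Everything else is arithmetic in the statement of Proposition \ref{CSprop}.
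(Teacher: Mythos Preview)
Your proposal is correct and is exactly the argument the paper has in mind: Corollary~\ref{sccor} is stated without proof precisely because it is the specialization $r=r_1=r_2=3$ of Proposition~\ref{CSprop}, and your arithmetic with the exponents is right. One small remark on phrasing: the sentence ``By condition $(*)$ applied to that vertex of $P_i$'' reads as if you are assuming what must be checked; the independence of $\Lambda_i,\Lambda_j,\Lambda_k$ is not given by $(*)$ for $P$ (these facets do not share a vertex in $P$) but, as you correctly note afterwards, by orientability via the argument in the proof of Proposition~\ref{CSprop}.
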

\begin{example}\label{Dex}
For $P=\Delta^3$ up to a change of basis in the image there are two possible vector-colourings: one of rank $4$ and one of rank $3$. The first of them corresponds to the real moment-angle manifold $\mathbb{R}\mathcal{Z}_{\Delta^3}=S^3$, and the  
second -- to a unique small cover $\mathbb RP^3$. Both of them are irreducible.
\end{example}

\begin{example}\label{DxIex}
Let us classify orientable manifolds $M(P,\Lambda)$ for $P=\Delta^2\times I$. It is necessary that vectors 
$\Lambda_i$, $\Lambda_j$ and $\Lambda_k$ corresponding to side facets of the prism, are linearly independent. 
Let $\Lambda_l$ and $\Lambda_r$ correspond to bases. 

If $\Lambda$ has rank $5$, then $M(P,\Lambda)=\mathbb R\mathcal{Z}_{\Delta^2\times I}=S^2\times S^1$ is a prime manifold. 

If $\Lambda$ has rank $4$, then up to a reflection of $\Delta^2\times I$ we can assume that vectors $\Lambda_i$, $\Lambda_j$, $\Lambda_k$, and $\Lambda_l$ form a basis. Then up to a rotation of $\Delta^3\times I$ we have the following cases:
$\Lambda_r=\Lambda_l$, $\Lambda_r=\Lambda_i+\Lambda_j+\Lambda_k$, and $\Lambda_r=\Lambda_i+\Lambda_j+\Lambda_l$.
Example \ref{Dex} and Proposition \ref{CSprop} imply that $M(P,\Lambda)$ in these cases is homeomorphic to 
$S^3\#S^3\#(S^2\times S^1)=S^2\times S^1$, $S^3\#(\mathbb RP^3)^{\#2}\#(S^2\times S^1)^{\#0}=\mathbb RP^3\#\mathbb RP^3$, 
and $S^3\#S^3\#(S^2\times S^1)=S^2\times S^1$ respectively. 

If $\Lambda$ has rank $3$, then $\Lambda_r=\Lambda_l=\Lambda_i+\Lambda_j+\Lambda_k$, and 
$M(P,\Lambda)=\mathbb RP^3\#\mathbb RP^3\#(S^2\times S^1)^{\#0}=\mathbb RP^3\#\mathbb RP^3$.
\end{example}


\begin{theorem}\label{3th}
\begin{enumerate}
\item \label{I}Any simple $3$-polytope $P$ can be decomposed into a connected sum of simplices and  flag $3$-polytopes 
along vertices:
$$
P=Q_1\#\dots\#Q_N.
$$
The decomposition is unique in the following sense: if there is another decomposition
$P=Q_1'\#\dots\#Q_{N'}'$, then $N=N'$ and there is a reordering of the polytopes and combinatorial equivalences 
$Q_i\simeq Q_i'$ preserving the correspondence between the polytopes, chosen vertices and facets in these vertices under all the connected sums.

\item The orientable manifold $M(\Delta^3,\Lambda)$ is either $S^3$, or $\mathbb RP^3$. It is irreducible. The orientable manifold 
$M(\Delta^2\times I,\Lambda)$ is either $S^2\times S^1$, which is prime, or $\mathbb{R}P^3\#\mathbb{R}P^3$. 
For $P\ne \Delta^3, \Delta^2\times I$ and an orientable manifold $M(P,\Lambda)$ the decompositions from item (\ref{I}) 
and Proposition (\ref{CSprop}) give  the prime decomposition into a connected sum of copies of $S^2\times S^1$ and copies of manifolds $M(Q_i,\Lambda_i)$ of flag $3$-polytopes $Q_i$, where $\Lambda_i$ is the induced vector-colouring. The latter manifolds are aspherical; therefore, they are irreducible.
\end{enumerate}
\end{theorem}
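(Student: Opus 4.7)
For part (\ref{I}), I would argue existence by induction on the number $b_3(P)$ of $3$-belts of $P$. In the base case $b_3(P)=0$, the characterization of flag polytopes recalled in Subsection \ref{bfam} forces either $P=\Delta^3$ or $P$ flag, and the one-term decomposition suffices. For the inductive step, pick any $3$-belt $\mathcal{B}$ of $P$; by Construction \ref{con:bcut} together with the remark preceding Corollary \ref{cor:3bQ}, $\mathcal{B}$ exhibits $P$ as a connected sum along vertices $P=Q_1\#_{v_1,v_2}Q_2$. Corollary \ref{cor:3bQ} then gives $b_3(Q_1)+b_3(Q_2)=b_3(P)-1$, so induction applies. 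For uniqueness I would attach to any decomposition $P=Q_1'\#\cdots\#Q_{N'}'$ the collection of $3$-belts it produces inside $P$: since simplices and flag polytopes contribute no $3$-belts, the $N'-1$ connected-sum operations biject with the complete set of $3$-belts of $P$. By Corollary \ref{3bnested} this set is nested, and Lemmas \ref{NFC-lemma} and \ref{lem:iso} then pin down the combinatorial data of the decomposition (the identified triangles and the cutting-off vertices) up to the stated equivalence.

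For part (2), the cases $P=\Delta^3$ and $P=\Delta^2\times I$ are handled directly in Examples \ref{Dex} and \ref{DxIex}; one only needs to note that $S^3$ and $\mathbb{R}P^3$ are irreducible and that $S^2\times S^1$ is prime, so the stated expressions are indeed prime decompositions. For general $P$, the plan is to apply part (\ref{I}) to write $P=Q_1\#\cdots\#Q_N$ and then iterate Proposition \ref{CSprop} along the tree of connected sums. At each step the orientability of $M(P,\Lambda)$, via the linear-independence argument already present in the proof of Proposition \ref{CSprop}, forces the three vectors on the splitting $3$-belt to be linearly independent, so the induced vector-colourings on the two pieces are legitimate and Proposition \ref{CSprop} applies. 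The outcome is an explicit connected-sum expression for $M(P,\Lambda)$ in terms of the pieces $M(Q_i,\Lambda_i)$ together with some number of copies of $S^2\times S^1$; simplex pieces contribute only trivial $S^3$ or $\mathbb{R}P^3$ summands, handled exactly as in Example \ref{Dex}.

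The main obstacle is then to verify that $M(Q_i,\Lambda_i)$ is irreducible whenever $Q_i$ is flag. Since $Q_i$ is flag the dual complex $K_{Q_i}$ is a flag simplicial $2$-sphere, and by a classical theorem of Davis --- available to us through Proposition \ref{RZKP} combined with the retraction technique of \cite[Exercise 4.2.13]{BP15} and \cite[Proposition 2.2]{PV16} --- the real moment-angle manifold $\mathbb R\mathcal Z_{Q_i}$ is aspherical, with contractible universal cover. Because $H(\Lambda_i)$ is finite and acts freely, the quotient $M(Q_i,\Lambda_i)$ inherits the same contractible universal cover and is therefore itself aspherical. The standard fact that a closed aspherical $3$-manifold is irreducible (see e.g.\ \cite[Section 1.2]{AFW15}) yields irreducibility of every $M(Q_i,\Lambda_i)$. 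Combined with primeness of $S^2\times S^1$ and the elementary behaviour of $S^3$ summands, this identifies the connected-sum expression from the previous paragraph as the prime decomposition of $M(P,\Lambda)$, completing the proof.
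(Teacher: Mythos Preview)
Your proposal is correct and follows essentially the same route as the paper: for part~(\ref{I}) you use induction on the number of $3$-belts where the paper cuts along all of them at once via the nested-family machinery, but the underlying mechanism (Corollaries~\ref{3bnested}, \ref{Pibcor}, \ref{cor:3bQ} and Lemma~\ref{lem:iso}) is identical, and for part~(2) both arguments iterate Proposition~\ref{CSprop} and then invoke asphericity of $\mathbb R\mathcal Z_{Q_i}$ for flag $Q_i$ to conclude irreducibility.

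One point to correct: your justification of asphericity is misattributed. The retraction technique of \cite[Exercise~4.2.13]{BP15} and \cite[Proposition~2.2]{PV16} (Lemma~\ref{rlemma} in the paper) shows that full subcomplexes give $\pi_1$-injective subspaces; it does \emph{not} prove asphericity of $\mathbb R\mathcal Z_{Q_i}$. For that the paper appeals to \cite[Theorem~2.2.5]{DJS98} (equivalently the CAT(0) structure on the Davis complex, cf.\ \cite[Proposition~1.2.3]{D08}), which is the correct input. Once asphericity is in hand, your deduction of irreducibility is fine; the paper spells it out via \cite[Lemma~3.2]{L09} together with the Poincar\'e conjecture, which is the content behind the fact you cite from \cite{AFW15}.
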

\begin{proof}
The nested set of curves $\widehat{\gamma}(\mathcal{B})$ corresponding to the family of all 
$3$-belts gives a canonical decomposition of $P$ into a connected sum of polytopes $P_i$ 
along triangles and polytopes $Q_i$ along vertices. 
By Corollary \ref{cor:3bQ} the polytopes $Q_i$ have no $3$-belts. Hence each of them is either a flag polytope, or the simplex.

Let us prove the uniqueness of the decomposition. Let $P$ be represented as a connected sum along vertices of polytopes $Q_i'$ without $3$-belts.  Each operation of a connected sum along vertices produces
on the resulting polytope a $3$-belt. Let us choose on this belt a piecewise linear curve satisfying condition
(1) of Definition \ref{def:nsc}. After all the operations we obtain a nested family of curves.  The corresponding belts on $P$ form a nested set. If there is a $3$-belt of $P$ not
lying in this set, then by Corollary \ref{cor:3bQ} it corresponds to a $3$-belt on some polytope $Q_i'$, which is a contradiction. Thus, all the belts of $P$ lie in this set. By Lemma \ref{lem:iso} the nested family of curves corresponding to the polytopes $Q_i'$ is isotopic to the nested family of 
curves corresponding to $Q_i$. Since the polytopes in each family are combinatorially obtained by cutting the polytope $P$ along the curves from the families and shrinking the curves to points, we see that there is a bijection between the sets of 
polytopes $\{Q_i\}$ and $\{Q_i'\}$
preserving combinatorial equivalence. This finishes the proof of the first part of the theorem.

The case of manifolds $M(\Delta,\Lambda)$ and $M(\Delta^2\times I,\Lambda)$ is covered by Examples \ref{Dex} and \ref{DxIex}. Consider the decomposition given by item (\ref{I}) 
and Proposition (\ref{CSprop}). Each manifold in the decomposition is either $M(\Delta,\Lambda)$, which is irreducible, or $S^2\times S^1$, which is prime, or $M(Q_i,\Lambda_i)$ for a flag polytope $Q_i$. For a flag polytope $Q_i$ the manifold
$\mathbb R\mathcal{Z}_{Q_i}$ is aspherical. This follows from \cite[Theorem 2.2.5]{DJS98} (see also \cite[Proposition 1.2.3]{D08}). 
It is known that any aspherical compact closed  orientable $3$-manifold is irreducible. For example,
\cite[Lemma 3.2]{L09} implies that if a  compact closed aspherical $3$-manifold  $M$ is decomposed as a connected sum
$M_1\#M_2$, then one of the summands is homotopy equivalent to $S^3$. Then the Poincare conjecture proved 
by G.~Perelman implies that this summand is homeomorphic to $S^3$.  Since each manifold $M(Q_i,\Lambda_i)$ is covered by 
$\mathbb R\mathcal{Z}_{Q_i}$, it is also aspherical and irreducible.  
\end{proof}

\begin{corollary}
The orientable manifold $M(P,\Lambda)$ over a simple $3$-polytope $P$ is prime if and only if either $P=\Delta^3$, or $P$ is flag,
or $P=\Delta^2\times I$ and $\Lambda$ either has rank $5$, or it has rank $4$ and each vector of the base does not lie in the subspace spanned by vectors of side facets.    
\end{corollary}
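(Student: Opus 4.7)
The plan is to combine Theorem \ref{3th}, which supplies the prime decomposition of $M(P,\Lambda)$, with the explicit enumeration of Example \ref{DxIex} for $P=\Delta^2\times I$.

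For sufficiency I will handle the three structural cases separately. If $P=\Delta^3$ or $P$ is flag, Theorem \ref{3th}(2) states that $M(P,\Lambda)$ is irreducible (in the first case $S^3$ or $\mathbb{R}P^3$, in the second aspherical), hence prime. If $P=\Delta^2\times I$ satisfies the stated rank condition, Example \ref{DxIex} exhibits $M(P,\Lambda)\simeq S^2\times S^1$, which is prime; an inspection of the four orientable subcases in that example shows that the prime outcomes occur exactly when neither $\Lambda_l$ nor $\Lambda_r$ lies in $\langle\Lambda_i,\Lambda_j,\Lambda_k\rangle$.

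For necessity I will assume $M(P,\Lambda)$ is prime. The case $P=\Delta^2\times I$ is handled directly by Example \ref{DxIex}: its remaining cases (rank $3$, or rank $4$ with $\Lambda_r=\Lambda_i+\Lambda_j+\Lambda_k$) yield $\mathbb{R}P^3\#\mathbb{R}P^3$, and these are precisely the colourings in which some base vector lies in $\langle\Lambda_i,\Lambda_j,\Lambda_k\rangle$. It remains to exclude every $P$ that is neither $\Delta^3$, nor flag, nor $\Delta^2\times I$. For such $P$, Theorem \ref{3th}(1) gives a vertex decomposition $P=Q_1\#\cdots\#Q_N$ with $N\geq 2$, and the hypothesis $P\neq\Delta^2\times I$ rules out the special case $N=2$ with $Q_1=Q_2=\Delta^3$; hence either $N\geq 3$, or $N=2$ with some flag $Q_j$. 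Iterating Proposition \ref{CSprop} presents $M(P,\Lambda)$ as a connected sum of copies of the $M(Q_i,\Lambda_i)$ and copies of $S^2\times S^1$, which by Theorem \ref{3th}(2) is the prime decomposition of $M(P,\Lambda)$, and it suffices to verify that at least two of these summands are non-trivial.

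This count is the main technical obstacle and I intend to establish it by induction on $N$. For the base $N=2$ with some flag $Q_j$, Proposition \ref{CSprop} yields $2^{r-r_j}\geq 1$ copies of the non-trivial aspherical $M(Q_j,\Lambda_j)$ together with $2^{r-3}-2^{r-r_1}-2^{r-r_2}+1$ copies of $S^2\times S^1$: if $r_j<r$ the first count is already at least $2$, while if $r_j=r$ a direct calculation (using $r_{3-j}\geq 3$, since the three belt vectors are linearly independent) shows that the $S^2\times S^1$ coefficient together with the copies of $M(Q_{3-j},\Lambda_{3-j})$ always provides at least one further non-trivial prime factor. For the inductive step $N\geq 3$ I write $P=Q_1\#P'$ with $P'$ containing $N-1\geq 2$ pieces and apply Proposition \ref{CSprop}; if $P'$ does not belong to the prime-$\Delta^2\times I$ situation the induction hypothesis provides at least two non-trivial summands in $M(P',\Lambda')$, which appear in $M(P,\Lambda)$ with multiplicity $2^{r-r_{P'}}\geq 1$; and if $P'=\Delta^2\times I$ in the prime case, so that $M(P',\Lambda')=S^2\times S^1$, an explicit calculation of the $S^2\times S^1$ coefficient in Proposition \ref{CSprop} forces at least one additional copy of $S^2\times S^1$ beyond the one coming from $M(P',\Lambda')$, regardless of whether $M(Q_1,\Lambda_1)$ equals $S^3$. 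In every situation $M(P,\Lambda)$ is thus a non-trivial connected sum, contradicting the assumption of primeness and completing the classification.
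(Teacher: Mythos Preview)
Your overall strategy is correct and matches what the paper has in mind: the corollary is stated there without proof, as an immediate consequence of Theorem~\ref{3th}, Proposition~\ref{CSprop}, and Example~\ref{DxIex}, and you are simply spelling out the counting needed to verify that the connected-sum formula always produces at least two non-$S^3$ factors when $P$ is outside the listed cases.

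There is one genuine slip in your inductive step. In the sub-case $P'=\Delta^2\times I$ with $M(P',\Lambda')=S^2\times S^1$, you claim that the total number of $S^2\times S^1$ summands in
\[
M(Q_1,\Lambda_1)^{\#2^{r-r_1}}\ \#\ (S^2\times S^1)^{\#2^{r-r_{P'}}}\ \#\ (S^2\times S^1)^{\#\left[2^{r-3}-2^{r-r_1}-2^{r-r_{P'}}+1\right]}
\]
is always at least two, ``regardless of whether $M(Q_1,\Lambda_1)$ equals $S^3$''. But that total equals $2^{r-3}-2^{r-r_1}+1$, which is exactly $1$ when $r_1=3$. The argument is easily repaired: if $r_1=3$ then $M(Q_1,\Lambda_1)$ is either $\mathbb{R}P^3$ (when $Q_1=\Delta^3$) or aspherical (when $Q_1$ is flag), hence never $S^3$, so its $2^{r-3}\geq 1$ copies supply the second non-trivial factor; if instead $M(Q_1,\Lambda_1)=S^3$ then necessarily $Q_1=\Delta^3$ and $r_1=4$, whence the $S^2\times S^1$ count is $2^{r-4}+1\geq 2$. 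The same dichotomy (split on whether $M(Q_1,\Lambda_1)=S^3$, which forces $r_1=4$) also streamlines your base-case analysis. With this correction the proof goes through.
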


\section{$JSJ$-decomposition and geometrization}\label{Sec:main}
\subsection{General information on a $JSJ$-decomposition}\label{JSJgen}
A {\it Seifert fibered manifold} is a compact $3$-manifold $N$ together with a
decomposition of $N$ into disjoint simple closed curves (called Seifert fibers) such
that each Seifert fiber has a tubular neighbourhood that forms a standard fibered torus.
The standard fibered torus corresponding to a pair of coprime integers $(a;b)$ with
$a>0$ is the surface bundle of the automorphism of a disk given by rotation by an
angle of $ \frac{2\pi b}{a}$, equipped with the natural fibering by circles. If $a=1$, then the
middle Seifert fiber is called regular, otherwise it is called singular.


A $3$-manifold $N$ is {\it atoroidal}, if any map
$T\to N$ from a torus to $N$ which induces a monomorphism $\pi_1(T)\to \pi_1(N)$ can be
homotoped into the boundary of $N$. There exist orientable,
irreducible $3$-manifolds that cannot be cut into atoroidal components in a unique way
(e.g., the $3$-torus). Nonetheless,  any compact, orientable, irreducible
$3$-manifold with empty or toroidal boundary admits a canonical decomposition along
tori into atoroidal and Seifert fibered manifolds.

\begin{definition}
A $2$-submanifold $\Sigma$ of a $3$-dimensional manifold $N$ is called {\it incompressible}, if each its 
connected component $\Sigma_i$ is not homeomorphic to $S^2$ and $D^2$ and each induced mapping 
$\pi_1(\Sigma_i)\to \pi_1(N)$ is injective.
\end{definition}

The following theorem was first announced by Waldhausen (1969) 
and proved independently by Jaco--Shalen (1979) 
and Johannson (1975, 1979).

\begin{theorem}[JSJ-Decomposition Theorem] {\rm (see \cite[Theorem 1.6.1]{AFW15})} 
Let $N$ be a compact, orientable, irreducible
$3$-manifold with empty or toroidal boundary. There exists a (possibly empty)
collection of disjointly embedded incompressible tori $T_1$, \dots, $T_m$ such that each component
of $N$ cut along $T_1$, \dots, $T_m$ is atoroidal or Seifert fibered. Any such collection
of tori with a minimal number of components is unique up to isotopy.
\end{theorem}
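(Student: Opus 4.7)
The plan is to follow the classical strategy of Waldhausen--Jaco--Shalen--Johannson and split the argument into an existence half and a uniqueness half. For existence I would first establish Kneser--Haken finiteness: in a compact orientable irreducible $3$-manifold $N$ with incompressible toroidal (or empty) boundary, any pairwise disjoint family of pairwise non-parallel incompressible closed surfaces of bounded genus has cardinality bounded by a constant depending only on a fixed triangulation of $N$. This is proved by a normal-surface innermost-disk argument: put the surfaces in normal position, count the number of normal-disk types, and show that too many disjoint non-parallel pieces would force two of them to co-bound a product region. Specializing to genus one gives a finite upper bound on the size of any disjoint family of pairwise non-parallel incompressible tori, so one may pick a family $\mathcal{T}=\{T_1,\dots,T_k\}$ that is maximal with this property.

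Next I would analyze a piece $X$ of $N$ cut along $\mathcal{T}$. Any incompressible torus in $X$ that is not $\partial$-parallel must, by maximality, be parallel in $N$ to some $T_i$; standard cut-and-paste on the resulting product region shows the torus was already $\partial$-parallel in $X$, or else $X$ contains an essential torus whose presence must be explained. The main obstacle, and the deepest step, is the \emph{Torus Theorem}: if a compact orientable irreducible $3$-manifold with incompressible boundary contains an essential (not $\partial$-parallel) embedded torus, then either the manifold is Seifert fibered, or a new essential torus can be produced disjointly from the given one. In the Haken setting this is Jaco--Shalen/Johannson; in general it is a consequence of Casson--Jungreis/Gabai plus Perelman's geometrization. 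Granting it, each piece of $N{\setminus}\nu(\mathcal{T})$ is either atoroidal or Seifert fibered, since any further essential torus in a non-Seifert piece would contradict maximality after reinsertion. Passing from the maximal to a \emph{minimal} such family is then a straightforward pruning: if a $T_i$ lies between two Seifert pieces whose fibrations match across $T_i$, remove it; if it is $\partial$-parallel on one side, remove it. Iterating produces a minimal collection of incompressible tori along which $N$ decomposes into atoroidal or Seifert fibered pieces, proving existence.

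For uniqueness up to isotopy, given two minimal collections $\mathcal{T}$ and $\mathcal{T}'$ I would isotope them into general position with intersection minimized. Each component of intersection is a simple closed curve; incompressibility of the tori together with irreducibility of $N$ lets me eliminate innermost bigons and annuli by the usual cut-and-paste, reducing to the case $\mathcal{T}\cap\mathcal{T}'=\varnothing$. Each torus of $\mathcal{T}'$ then sits inside a piece $X$ of $N{\setminus}\nu(\mathcal{T})$; if $X$ is atoroidal the torus is $\partial$-parallel and can be isotoped onto a component of $\mathcal{T}$, while if $X$ is Seifert fibered one uses the classification of incompressible tori in Seifert manifolds to isotope the torus to be vertical (saturated by fibers) or horizontal, and vertical tori can be absorbed into the Seifert structure without changing the pieces, contradicting minimality of $\mathcal{T}'$ unless the torus is already isotopic to one in $\mathcal{T}$. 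The hardest technical point here is that a piece can occasionally admit several Seifert fibrations, so ``absorbing a vertical torus'' could a priori disagree between $\mathcal{T}$ and $\mathcal{T}'$; this is settled by the classical uniqueness of Seifert fibrations on compact orientable irreducible $3$-manifolds with non-empty incompressible boundary (outside a short explicit list of small exceptional manifolds, all of which are ruled out by minimality of the collections). Combining these steps yields a piecewise isotopy of $\mathcal{T}'$ onto $\mathcal{T}$.
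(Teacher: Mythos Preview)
The paper does not prove this theorem at all; it is stated as background and cited from \cite[Theorem 1.6.1]{AFW15}, with attribution to Waldhausen, Jaco--Shalen, and Johannson. There is therefore nothing in the paper to compare your proposal against.

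That said, your sketch is a reasonable outline of the classical argument, and the overall architecture (Kneser--Haken finiteness, maximal torus family, Torus Theorem, then pruning for existence; general position, innermost arguments, and classification of tori in Seifert pieces for uniqueness) is correct. A few points where the sketch is loose: First, note that the paper uses the \emph{homotopy} definition of atoroidal (every $\pi_1$-injective map of a torus homotopes into $\partial N$), not the embedded one; the gap between ``every embedded incompressible torus is $\partial$-parallel'' and this stronger condition is exactly what the Torus Theorem covers, but your statement of the Torus Theorem is not quite the standard one and would need to be formulated more carefully. Second, in the uniqueness half, the step ``eliminate innermost bigons and annuli'' on intersecting tori is more delicate than you indicate: intersection curves on a torus do not bound disks, so one works with essential annuli rather than innermost disks, and the argument that the resulting isotopy reduces intersection number uses irreducibility plus an annulus-cobordism argument. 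Third, horizontal tori in Seifert pieces do occur and must be handled separately (they force the piece to be a torus bundle or semi-bundle), which you gloss over. None of these is fatal to the strategy, but each would require real work to make the proof complete.
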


\begin{notation}
Denote by $K^2$ the Klein bottle, and by $K\widetilde{\times} I$ the unique orientable total space of (a necessarily twisted)
$I$-bundle over $K^2$.
\end{notation}

The following theorem 
says in particular that "sufficiently complicated"
Seifert fibered manifolds carry a unique Seifert fibered structure.
\begin{theorem}\label{sun-th}{\rm (see \cite[Theorem 1.5.2]{AFW15})} Let $N$ be an orientable $3$-manifold that admits a Seifert fibered
structure. If $N$ has nontrivial boundary, then the Seifert fibered structure of $N$ is
unique up to isotopy if and only if $N$ is not homeomorphic to one of the following: 
$S^1\times D^2$, $T^2\times I$, and $K^2\widetilde{\times}I$.
\end{theorem}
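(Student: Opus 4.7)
The plan is to prove the two directions of the biconditional separately. For the forward (\emph{only if}) direction, I would exhibit on each of the three exceptional manifolds at least two non-isotopic Seifert fibered structures. For the reverse (\emph{if}) direction, I would use an algebraic invariant of $\pi_1(N)$ to recover the regular fiber canonically, and then argue that the entire fibration is determined by the fiber class together with the resulting quotient map to the base orbifold.

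For the exceptional examples, the solid torus $S^1\times D^2$ admits infinitely many inequivalent Seifert fiberings: for each pair $(a,b)$ with $\gcd(a,b)=1$ and $a>0$, one can fiber it with a single singular fiber of multiplicity $a$ along the core, where the product fibering corresponds to $(1,0)$. For $T^2\times I$, any essential simple closed curve $\gamma\subset T^2$ gives a Seifert fibering over the annulus with regular fiber parallel to $\gamma$, and different isotopy classes of $\gamma$ produce non-isotopic fiberings. For $K^2\widetilde{\times}I$, one natural fibering comes from its $I$-bundle structure over $K^2$ (with regular fiber the $S^1$ obtained from the boundary identification), while another fibers it over the M\"obius band with a pair of order-two singular fibers, inherited from the orientation double cover $T^2\times I\to K^2\widetilde{\times}I$.

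For the reverse direction, let $N$ admit a Seifert fibering with base orbifold $\mathcal{O}$. Since $N$ is orientable with non-empty boundary, $\partial N$ is a union of tori and $\mathcal{O}$ has non-empty boundary. The key algebraic input is that the cyclic subgroup $\langle h\rangle\subset\pi_1(N)$ generated by a regular fiber $h$ is normal, with quotient the orbifold fundamental group $\pi_1^{\mathrm{orb}}(\mathcal{O})$. If $\mathcal{O}$ has negative orbifold Euler characteristic, then $\langle h\rangle$ is characterised intrinsically as the unique maximal normal infinite cyclic subgroup of $\pi_1(N)$; hence any second Seifert fibering on $N$ has a regular fiber isotopic to $h$, and then the base projection is forced up to isotopy. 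So one must only rule out the small cases where $\chi^{\mathrm{orb}}(\mathcal{O})\geqslant 0$. Enumerating Seifert fibered manifolds with non-empty toroidal boundary and non-negative orbifold Euler characteristic of the base yields precisely: $S^1\times D^2$ (disk base with at most one cone point), $T^2\times I$ (annulus base with no cone points), and $K^2\widetilde{\times}I$ (M\"obius band base with no cone points, equivalently disk base with two order-two cone points).

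The main obstacle I anticipate is the bookkeeping in this last enumeration: one must verify carefully that Seifert fibered manifolds over a Euclidean-type base other than $T^2\times I$ and $K^2\widetilde{\times}I$ either coincide with one of the exceptional manifolds on the list or retain a unique fibration because some additional rigidity (such as the arrangement of singular fibers forcing the base projection) rules out non-isotopic alternatives. A secondary difficulty is establishing the algebraic characterization of $\langle h\rangle$ when $\chi^{\mathrm{orb}}(\mathcal{O})<0$; this requires showing, via a detailed analysis of the Fuchsian-type group $\pi_1^{\mathrm{orb}}(\mathcal{O})$ with cusps, that no other normal $\mathbb{Z}$ subgroup of $\pi_1(N)$ can exist, which ultimately reduces to the fact that such $\pi_1^{\mathrm{orb}}$ have trivial center and no normal infinite cyclic subgroups.
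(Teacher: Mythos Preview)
The paper does not prove this theorem at all: it is quoted verbatim as a background result from \cite[Theorem~1.5.2]{AFW15}, with no argument supplied. So there is no ``paper's own proof'' to compare against; the statement is simply imported as a known fact from the literature on $3$-manifolds (the underlying result goes back to Waldhausen).

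That said, your sketch follows the standard line of argument one finds in the references. One small correction: your description of the two Seifert structures on $K^2\widetilde{\times}I$ is garbled. The two non-isotopic fiberings are (a) over the M\"obius band with \emph{no} exceptional fibers, and (b) over the disk $D^2$ with \emph{two} exceptional fibers of multiplicity~$2$; you have conflated these into ``over the M\"obius band with a pair of order-two singular fibers,'' which is not one of them. Also, the phrase ``regular fiber the $S^1$ obtained from the boundary identification'' does not describe a fibering: the $I$-bundle structure by itself is not a circle fibration, and what actually happens is that the two essential $S^1$-foliations of $K^2$ each extend to a Seifert fibering of the twisted $I$-bundle, giving the two structures above. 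Apart from this, your outline of the reverse direction (characterising $\langle h\rangle$ as the unique maximal normal infinite cyclic subgroup when $\chi^{\mathrm{orb}}(\mathcal{O})<0$, then enumerating the small-base cases) is the correct strategy and matches the classical treatment.
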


For an additively written abelian group $G$ we denote by $PG$ the set of equivalence
classes of the equivalence relation $g\sim-g$ on $G\setminus\{0\}$, and for $g\in G\setminus\{0\}$ 
we denote by $[g]$ its equivalence class in $PG$. Given a Seifert fibered manifold $N$ with a choice
of a Seifert fibered structure and a boundary torus $T$, the regular fiber determines
an element $f(N,T)\in PH_1(T)$ in a natural way. It follows from Theorem \ref{sun-th}
that $f(N,T)$ is independent of the choice of the Seifert fibered structure, unless $N$ is one
of $S^1\times D^2$, $T^2\times I$, $K^2\widetilde{\times} I$.
For later use we record the following.

\begin{lemma} {\rm (see \cite[Lemma 1.5.3]{AFW15})} Let $N_1$ and $N_2$ be two Seifert fibered manifolds (where we allow
$N_1=N_2$) and let $N_1\cup_{T_1=T_2}N_2$ be the result of gluing $N_1$ and $N_2$ along boundary
tori $T_1$ and $T_2$. Then the following are equivalent:
\begin{enumerate}
\item there exists a Seifert fibered structure on $N_1\cup_{T_1=T_2}N_2$ that restricts (up to
isotopy) to the Seifert fibered structures on $N_1$ and $N_2$;
\item $f(N_1,T_1)=f(N_2,T_2)\in PH_1(T_1)=PH_1(T_2)$.
\end{enumerate}
\end{lemma}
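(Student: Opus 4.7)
The plan is to prove the two implications separately, working locally near the gluing torus and reducing everything to the standard product model of a Seifert fibration in a collar neighborhood of a torus boundary component.

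For the implication $(1)\Rightarrow(2)$, I would argue essentially from the definition. Suppose that the manifold $N_1\cup_{T_1=T_2}N_2$ carries a Seifert fibered structure whose restrictions to $N_1$ and $N_2$ agree up to isotopy with the prescribed structures. The regular fiber of the combined Seifert fibration, taken on the common torus $T=T_1=T_2$, is a simple closed curve whose free homotopy class in $T$ is well defined up to orientation. But this same curve, viewed inside $N_i$, is a regular fiber of the Seifert fibration on $N_i$ (after an isotopy bringing the restriction into agreement with the given structure), so it represents $f(N_i,T_i)\in PH_1(T_i)$ for each $i=1,2$. Thus $f(N_1,T_1)=f(N_2,T_2)$.

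For the converse $(2)\Rightarrow(1)$, the key tool is the local model of a Seifert fibration near a torus boundary: on each $N_i$ a collar neighborhood of $T_i$ is homeomorphic to $T_i\times[0,1]$, with the Seifert fibration equal to the product foliation by circles parallel to a primitive curve $c_i\subset T_i$ representing $f(N_i,T_i)$. By hypothesis, the gluing homeomorphism $\varphi\colon T_1\to T_2$ sends $[c_1]$ to $[c_2]$ in $PH_1$, so $\varphi(c_1)$ is isotopic to $c_2$ as an unoriented simple closed curve on $T_2$. I would then use an ambient isotopy of $T_2$ supported in a small annular neighborhood of $\varphi(c_1)\cup c_2$, extended to a collar isotopy of the Seifert fibration of $N_2$ (which is possible because the fibration is a product in the collar), to arrange that $\varphi$ carries the fibers of $N_1$ on $T_1$ exactly to the fibers of $N_2$ on $T_2$ as unoriented foliations. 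If the orientations also match, the two product fibrations glue together to a product fibration of $T\times[-1,1]$ and hence to a Seifert fibered structure on the whole union. If the orientations are opposite, I would reverse the orientation of the circle factor in the collar of $N_2$ by a further isotopy (simply flipping the $S^1$-direction in the product model), which still yields the same Seifert fibered structure on $N_2$ up to isotopy but now with fibers matching $\varphi$-equivariantly.

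The main obstacle, and the step that requires care, is the transition from matching fiber classes in $PH_1$ (unoriented) to matching the fibers as oriented foliations in the collars; this is where the product structure of the fibration near a torus boundary and the freedom to isotope within the collar are essential. Granted this local adjustment, the verification that the resulting partition of $N_1\cup N_2$ into circles is indeed a Seifert fibration is automatic, since the fibered chart condition is local and already satisfied inside each $N_i$ and, by construction, in a neighborhood of $T$.
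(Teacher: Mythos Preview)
The paper does not give its own proof of this lemma: it is quoted verbatim as \cite[Lemma~1.5.3]{AFW15} in the ``General information'' subsection and is used as a black box, so there is nothing in the paper to compare your argument against.

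That said, your sketch is the standard argument and is essentially correct. For $(1)\Rightarrow(2)$ your reasoning from the definition is fine. For $(2)\Rightarrow(1)$ the only point worth tightening is the isotopy step: once $\varphi_*[c_1]=\pm[c_2]$ in $H_1(T_2)$, the curves $\varphi(c_1)$ and $c_2$ are isotopic on $T_2$, and a level-preserving isotopy of the collar $T_2\times[0,1]$ (using that the fibration there is the trivial $S^1$-bundle) suffices to align the foliations; the possible sign is absorbed exactly as you say by reversing the $S^1$-direction in the product model. After that the local chart condition for a Seifert fibration is checked away from $T$ by hypothesis and near $T$ by the product model, so the glued structure is Seifert.
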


There is a criterion for showing that a collection of tori in $N$
are in fact the $JSJ$-tori. 

\begin{proposition}\label{JSJcrit} {\rm (see \cite[Proposition 1.6.2]{AFW15})}
Let $N$ be a compact, orientable, irreducible $3$-manifold with
empty or toroidal boundary. Let $T_1$, $\dots$, $T_m$ be disjointly embedded incompressible
tori in the $3$-manifold $N$. The $T_i$ are the $JSJ$-tori of $N$ if and only if
\begin{enumerate}
\item the components $M_1$, $\dots$, $M_n$ of $N$ cut along $T_1\cup\dots\cup T_m$ 
are Seifert fibered or atoroidal;
\item for any choice of Seifert fibered structures for all the Seifert fibered components
among $M_1$, \dots $M_n$ the following two conditions hold:
\begin{enumerate}
\item  if a torus $T_i$ cobounds two Seifert fibered components $M_r$ and $M_s$ with
$r\ne s$, then $f(M_r,T_i)\ne f(M_s,T_i)\in PH_1(T_i)$ and
\item if a torus $T_i$ abuts two boundary tori $T_i'$ and $T_i''$ of the same Seifert
fibered component $M_r$, then $f(M_r,T_i')\ne f(M_r,T_i'')$,
\end{enumerate}
and
\item if one of the $M_i$ equals $T^2\times I$, then $m=n=1$, i.e., $N$ is a torus bundle.
\end{enumerate}
\end{proposition}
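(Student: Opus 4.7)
The plan is to prove both directions using the existence and uniqueness clauses of the $JSJ$-Decomposition Theorem, together with the gluing lemma stated above (Lemma 1.5.3) which matches Seifert fibered structures across a torus precisely when the regular fiber classes agree in $PH_1$.

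For the ``only if'' direction, assume $T_1,\dots,T_m$ are (isotopic to) the $JSJ$-tori. Condition (1) is the defining property. For condition (2)(a), suppose two Seifert fibered pieces $M_r$ and $M_s$ glued along $T_i$ satisfied $f(M_r,T_i)=f(M_s,T_i)$; by the gluing lemma they would combine into a single Seifert fibered manifold, so the collection $T_1,\dots,\widehat{T_i},\dots,T_m$ would be a strictly smaller disjointly embedded family of incompressible tori whose complementary components are atoroidal or Seifert fibered, contradicting minimality. Case (2)(b) is analogous, with one piece glued to itself along two boundary tori. For condition (3), if some piece $M_i$ were $T^2\times I$ but $(m,n)\neq(1,1)$, then one of its two boundary tori is parallel to the other inside $M_i$, and removing it produces a strictly smaller family still satisfying condition (1)—again contradicting minimality.

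For the ``if'' direction, by uniqueness it suffices to show that no proper sub-family of $\{T_i\}$, up to isotopy, still has all complementary pieces atoroidal or Seifert fibered. Suppose some $T_i$ could be removed, merging pieces $M_r$ and $M_s$ (possibly $r=s$) into a single complementary piece $M$. If $M$ is Seifert fibered, then choosing a Seifert structure on $M$ restricts to Seifert structures on $M_r$ and $M_s$ with $f(M_r,T_i)=f(M_s,T_i)$, directly violating condition (2). If $M$ is atoroidal, then the image of the incompressible torus $T_i$ inside $M$ must be compressible or boundary-parallel in $M$; incompressibility of $T_i$ in $N$ rules out compressibility, and boundary-parallelism forces $M\cong T^2\times I$—hence $M_r=M_s=T^2\times I$ and by condition (3) this only occurs when $m=n=1$, the torus bundle case, where there is nothing to remove.

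The main obstacle is the subtle interaction between conditions (2) and (3), which reflects the well-known non-uniqueness of Seifert fiberings on $S^1\times D^2$, $T^2\times I$, and $K^2\widetilde{\times}I$ (Theorem \ref{sun-th}). In particular, the quantification over \emph{all} choices of Seifert fibered structures in condition (2) is essential, as is the separate handling of torus bundles in condition (3). A careful implementation must also invoke the Annulus--Torus characterization of essential surfaces in Seifert manifolds to rule out the creation of new atoroidal pieces when merging, and use the fact that an incompressible torus in an atoroidal manifold with toroidal boundary must be boundary-parallel.
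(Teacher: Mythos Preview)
The paper does not prove this proposition at all; it is quoted verbatim from \cite[Proposition 1.6.2]{AFW15} and used as a black box. Your sketch is the standard argument and is broadly sound, but one step is misstated: in the ``if'' direction, when the merged piece $M$ is atoroidal and $T_i$ is boundary-parallel in $M$, you conclude ``hence $M_r=M_s=T^2\times I$''. Boundary-parallelism only forces \emph{one} of $M_r,M_s$ to be $T^2\times I$; condition~(3) then gives $m=n=1$, which already contradicts $r\ne s$ (and in the self-gluing case $r=s$ contradicts $n=1$). You should also make explicit why, when $M$ is Seifert fibered, the incompressible torus $T_i$ can be taken vertical: if $T_i$ were horizontal, cutting along it would yield $T^2\times I$ (or $K^2\widetilde{\times}I$) pieces, and condition~(3) again disposes of this. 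With these two clarifications your outline is complete.
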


To distinguish between Seifert fibered and atoroidal components we will use the following result.
\begin{theorem}(see \cite[Theorem 2.10]{B02})\label{hypnoSei}
A complete hyperbolic manifold of finite volume can not be a Seifert fibered manifold.
\end{theorem}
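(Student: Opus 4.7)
The plan is to argue by contradiction using the fundamental group. Suppose $N$ is simultaneously a complete hyperbolic $3$-manifold of finite volume and a Seifert fibered manifold. Then $\Gamma = \pi_1(N)$ embeds as a torsion-free lattice in $PSL(2,\mathbb{C}) = \mathrm{Isom}^+(\mathbb{H}^3)$. Because $N$ has finite volume, the limit set of $\Gamma$ on $\partial\mathbb{H}^3 = S^2$ is the whole sphere; in particular $\Gamma$ is non-elementary, so by a ping-pong argument it contains a non-abelian free subgroup and is not virtually abelian.

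On the Seifert side, a regular fiber of the fibration $N \to B$ determines (up to conjugacy) an element $h \in \Gamma$, and one has the standard exact sequence
$$1 \longrightarrow \langle h \rangle \longrightarrow \Gamma \longrightarrow \pi_1^{orb}(B) \longrightarrow 1.$$
Since $\Gamma$ is torsion-free and infinite and $h$ represents an embedded circle in $N$, the cyclic subgroup $\langle h \rangle$ is isomorphic to $\mathbb{Z}$ and normal in $\Gamma$.

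As an infinite-order isometry of $\mathbb{H}^3$ the element $h$ is either loxodromic or parabolic. In either case its centralizer in $PSL(2,\mathbb{C})$ is a connected one-parameter abelian subgroup (conjugate to the diagonal $\mathbb{C}^*$ or to the upper-unipotent $\mathbb{C}$). Because $\mathrm{Aut}(\langle h\rangle) \cong \mathbb{Z}/2$, conjugation gives a homomorphism $\Gamma \to \mathbb{Z}/2$ whose kernel $\Gamma_0$ has index at most two, centralizes $h$, and therefore lies inside the abelian centralizer of $h$ in $PSL(2,\mathbb{C})$. Hence $\Gamma$ is virtually abelian, contradicting the non-elementarity established above.

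The main obstacle is the verification that a Seifert fibration on a torsion-free $3$-manifold supplies a genuine infinite cyclic \emph{normal} subgroup of $\pi_1$, and not merely a central subgroup when the base orbifold happens to be orientable: normality (rather than centrality) is precisely the mild hypothesis the argument needs, and it holds for any Seifert fibration; given this, the purely algebraic fact that normalizers of infinite cyclic subgroups in $PSL(2,\mathbb{C})$ are virtually abelian closes the proof.
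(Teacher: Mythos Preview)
The paper does not prove this theorem; it is simply quoted from \cite[Theorem 2.10]{B02} and used as a black box, so there is no ``paper's own proof'' to compare against. Your argument is the standard one and is essentially correct: a finite-covolume lattice in $PSL(2,\mathbb{C})$ is non-elementary and hence not virtually abelian, while the normaliser of any infinite cyclic subgroup of $PSL(2,\mathbb{C})$ is virtually abelian (because the centraliser of a loxodromic or a parabolic is abelian and has index at most two in the normaliser).

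One point deserves a word more of care. You assert that $\langle h\rangle\cong\mathbb{Z}$ because $\Gamma$ is torsion-free and $h$ ``represents an embedded circle''. Torsion-freeness rules out $1<|h|<\infty$, but embeddedness of the fibre does not by itself exclude $h=1$; plenty of embedded circles in an aspherical $3$-manifold are null-homotopic. What you actually need is the classical fact (see e.g.\ Scott \cite{S83}) that in a Seifert fibered $3$-manifold with infinite fundamental group the regular fibre has infinite order in $\pi_1$; equivalently, if the fibre were null-homotopic then the base orbifold would be spherical or bad and $N$ would be covered by $S^3$, contradicting $\widetilde N=\mathbb{H}^3$. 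You flag exactly this issue in your final paragraph, so you are aware of it, but the sentence quoted above does not yet dispatch the case $h=1$. With that small addition the proof is complete.

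A minor terminological quibble: the centralisers you describe ($\mathbb{C}^{*}$ for a loxodromic, $\mathbb{C}$ for a parabolic) are two-\emph{real}-dimensional, not literally ``one-parameter''; of course this does not affect the argument, since all you use is that they are abelian.
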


\subsection{A connection between the $JSJ$-decomposition and a geometrization}\label{JSJgeom}

A connection between the $JSJ$-decomposition and the Geometric Decomposition Theorem \ref{GDT} 
is described in the following way. 
We will refer to the union of tori and Klein bottles from Theorem
\ref{GDT} as the {\it  geometric decomposition surface}.

\begin{proposition} {\rm (see \cite[Proposition 1.9.2]{AFW15})}. \label{GDP}
Let $N$ be a compact closed orientable, irreducible $3$-manifold.
\begin{enumerate}
\item If $N$ is a $Sol$-manifold, then $N$ is geometric, i.e., the geometric decomposition
surface is empty. On the other hand $N$ has one $JSJ$-torus, namely, if
$N$ is a torus bundle, then the fiber is the $JSJ$-torus, and if $N$ is a twisted
double of $K^2\widetilde{\times}I$, then the JSJ-torus is given by the boundary of $K^2\widetilde{\times}I$.

\item Suppose that $N$ is not a $Sol$-manifold, and denote by $T_1$, \dots, $T_m$ the $JSJ$-tori
of $N$. We assume that they are ordered such that the tori $T_1$, \dots $T_n$ do not
bound copies of $K^2\widetilde{\times}I$ and that for $i=n+1,\dots, m$, each $T_i$ cobounds a
copy of $K^2\widetilde{\times}I$. Then the geometric decomposition surface of $N$ is given by
$$
T_1\cup\dots\cup T_n\cup K_{n+1}\cup\dots\cup K_m.
$$
Conversely, if $T_1\cup\dots\cup T_n\cup K_{n+1}\cup\dots\cup K_m$
is the geometric decomposition surface such that $T_1$, \dots, $T_n$ are tori and $K_{n+1}$, \dots, $K_m$ are Klein bottles, 
then the $JSJ$-tori are given by 
$$
T_1\cup \dots\cup T_n\cup\partial \nu(K_{n+1})\cup\dots\cup\partial \nu(K_{n+1}),
$$
where $\nu(K_i)$ is a tubular neighbourhood of $K_i$.
\end{enumerate}
\end{proposition}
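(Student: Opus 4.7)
For part (1), the plan is to split into the two cases of closed orientable $Sol$-manifolds. Since a $Sol$-manifold is geometric in the sense of Thurston, a single piece already carries a geometric structure, so the geometric decomposition surface from Theorem \ref{GDT} may be taken to be empty (and by the uniqueness statement, it actually \emph{must} be empty). For the $JSJ$ side, if $N$ is a $T^2$-bundle over $S^1$ with Anosov monodromy, the fiber $T$ is incompressible (the monodromy acts hyperbolically on $H_1(T)$, so $\pi_1(T)\to\pi_1(N)$ is injective) and cutting $N$ along $T$ gives $T^2\times I$, which is Seifert fibered; clause (3) of Proposition \ref{JSJcrit} then forces this to be \emph{the} $JSJ$-torus. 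If $N$ is the twisted double of $K^2\widetilde{\times}I$, the gluing torus $T=\partial(K^2\widetilde{\times}I)$ is incompressible in each piece (the inclusion $\pi_1(T)\hookrightarrow\pi_1(K^2\widetilde{\times}I)$ is injective as $T$ double covers the zero section), hence incompressible in $N$, and cutting along $T$ leaves two Seifert fibered pieces $K^2\widetilde{\times}I$; the fiber slope condition in Proposition \ref{JSJcrit}(2a) is verified directly from the Anosov gluing.

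For part (2), assume $N$ is not a $Sol$-manifold and let $T_1,\dots,T_m$ be the $JSJ$-tori, ordered so that $T_1,\dots,T_n$ do not cobound a $K^2\widetilde{\times}I$ piece. The forward direction will be carried out as follows. Each atoroidal $JSJ$-piece with toroidal boundary admits a complete hyperbolic structure of finite volume (Thurston's hyperbolization, which we may invoke at this stage); each Seifert fibered piece that is not $K^2\widetilde{\times}I$ admits one of the five Seifert model geometries. For each index $i>n$, merge the piece $K^2\widetilde{\times}I$ adjacent to $T_i$ with the neighboring $JSJ$-piece across $T_i$: cutting that neighboring piece along the core Klein bottle $K_i\subset K^2\widetilde{\times}I$ instead of along $T_i=\partial\nu(K_i)$ gives one larger geometric piece (this is the standard observation that if a Seifert piece is adjacent to $K^2\widetilde{\times}I$, then passing to the double cover branched over $K_i$ identifies the fiber structures, so the union admits a Seifert structure of the same geometric type as the neighbor). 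Hence $T_1\cup\dots\cup T_n\cup K_{n+1}\cup\dots\cup K_m$ is \emph{a} geometric decomposition surface, and the Klein bottles $K_i$ are incompressible because the inclusion $\pi_1(K_i)\hookrightarrow\pi_1(K^2\widetilde{\times}I)=\pi_1(K_i)$ is an isomorphism.

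For the converse direction, I would argue by minimality. Starting from any geometric decomposition surface written as $T_1\cup\dots\cup T_n\cup K_{n+1}\cup\dots\cup K_m$ with the $T_j$ tori and the $K_j$ Klein bottles, replace every Klein bottle $K_j$ by $\partial\nu(K_j)$ to obtain a disjoint family of incompressible tori in $N$. The components of $N$ cut along this torus family are exactly the geometric pieces of the geometric decomposition, except that each former piece containing a $K_j$ in its interior has been further cut along $\partial\nu(K_j)$, removing a copy of $K^2\widetilde{\times}I$. Each hyperbolic geometric piece is atoroidal (by Theorem \ref{hypnoSei} together with the fact that finite-volume hyperbolic manifolds have no essential tori), and each of the remaining Seifert-type geometric pieces, after possibly removing a collar $K^2\widetilde{\times}I$, is still Seifert fibered. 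One then verifies clauses (1)-(3) of Proposition \ref{JSJcrit}: the fiber-slope conditions in (2) follow from the minimality of the geometric decomposition surface (if two adjacent Seifert pieces share a regular fiber class on a common torus, their union is still Seifert fibered and the torus could be removed, contradicting minimality), and clause (3) cannot occur because $N$ is not $Sol$. This identifies the constructed tori as the $JSJ$-tori, yielding the claimed formula.

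The main obstacle, as I see it, is the forward step of verifying that merging a $K^2\widetilde{\times}I$ piece with its neighbor across $T_i$ produces a single piece carrying a \emph{single} finite-volume geometric structure of one of the eight Thurston types; this requires knowing that the geometric type of the neighboring Seifert piece is compatible with the $I$-bundle structure on $K^2\widetilde{\times}I$, and rests on the classification of Seifert geometries and on the fact that a finite-volume hyperbolic piece never abuts a $K^2\widetilde{\times}I$ (because a cusp cross-section must be a torus, not a Klein bottle). The rest of the argument is essentially bookkeeping between the $JSJ$ criterion (Proposition \ref{JSJcrit}) and the uniqueness clause of Theorem \ref{GDT}.
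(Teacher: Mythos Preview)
The paper does not prove this proposition; it is quoted verbatim from \cite[Proposition 1.9.2]{AFW15} and used as a black box (the only follow-up is the remark immediately after it about what will actually be needed later). So there is no ``paper's own proof'' to compare against.

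Your proof plan is broadly reasonable, but one point in the forward direction of part~(2) is phrased in a misleading way and creates unnecessary work. You describe ``merging'' the $K^2\widetilde{\times}I$ piece with its neighbor across $T_i$ and worry about producing a single geometric structure on the union, even invoking branched double covers. In fact nothing like this is needed: cutting $N$ along $K_i$ means removing $\nu(K_i)\cong K^2\widetilde{\times}I$, and what remains on that side is the neighboring $JSJ$-piece $M$ together with the collar $K^2\widetilde{\times}I\setminus\nu(K_i)\cong T^2\times I$, which is just $M$ again up to homeomorphism. So the geometric structure on $M$ (hyperbolic or Seifert) carries over unchanged; there is no genuine merging and no compatibility of Seifert fiberings to check. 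Your closing remark that ``a finite-volume hyperbolic piece never abuts a $K^2\widetilde{\times}I$ because a cusp cross-section must be a torus'' is therefore both unnecessary and incorrect as stated: in an orientable $N$ the boundary of $K^2\widetilde{\times}I$ \emph{is} a torus, so there is no obstruction to such an adjacency, and indeed this case occurs in the paper's Theorem~\ref{M-th} (free quadrangles with $\Lambda_i\in\pi_{\mathcal B}$ abutting hyperbolic pieces).

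For the converse direction, your appeal to Proposition~\ref{JSJcrit} is the right idea, but you should also check that none of the pieces obtained after replacing each $K_j$ by $\partial\nu(K_j)$ equals $T^2\times I$ unless $m=n=1$; this is where the hypothesis ``$N$ is not a $Sol$-manifold'' together with minimality of the geometric decomposition is used.
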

\begin{remark}
To use item (1) of Proposition \ref{GDP} we will not give details on $Sol$-manifolds. The only facts we will
need is that for a torus bundle the complement to the $JSJ$-torus is homeomorphic to $T^2\times I$, and for the  twisted
double of $K^2\widetilde{\times}I$ the complement of the $JSJ$-torus consists of two copies if $K^2\widetilde{\times}I$
(see \cite[Section 1.5]{AFW15}).
\end{remark}
All the previous results and the appropriate references can be found in \cite{AFW15}.

\subsection{Application to manifolds defined by vector-colourings}\label{MPJSJ}
Now, using general results described above we will find the $JSJ$-decomposition and the minimal geometric decomposition
of any orientable manifold $M(P,\Lambda)$ defined by a vector-colouring of a simple $3$-polytope. 
It turns out, that it corresponds to a canonical decomposition of a flag $3$-polytope along $4$-belts
into $k$-prisms, which correspond to Seifert fibered pieces, and almost Pogorelov polytopes, which produce hyperbolic parts.
The $JSJ$-tori, which are defined uniquely up to an isotopy, correspond to these canonical belts, and also to "free quadrangles"
of almost Pogorelov polytopes. These quadrangles are not surrounded by the canonical $4$-belts.  An 
explicit geometrization of the pieces may be given by a construction invented by 
A.D.~Mednykh and A.Yu.~Vesnin in a series of papers \cite{MV86, V87, M90, VM99, V17}. 
\begin{construction}\label{MVconstr}
Let $P$ be a right-angled polytope of finite volume in $\mathbb R^3$, $S^3$, $\mathbb{H}^3$, $S^2\times \mathbb R$ or $\mathbb{H}^2\times \mathbb R$, were for a product of spaces we mean that $P$ is a product of right-angled polytopes of finite volume in these spaces. For $\mathbb R^3$ such a polytope is combinatorially a cube, for $S^3$ -- a simplex, for 
$\mathbb{H}^3$ -- an almost Pogorelov polytope without adjacent quadrangles with all the quadrangles shrinked to points, for $S^2\times \mathbb R$ the polytope is a product of a right-angled triangle and a segment, and for $\mathbb{H}^2\times \mathbb R$ this is a product of a right-angled polygon of finite area and a segment. The polygon may have vertices on the absolute, while all its proper vertices have right
angles between the corresponding edges.

The polytope $P$ corresponds to a right-angled coxeter group 
$$
\langle \rho_1,\dots,\rho_m\rangle/(\rho_1^2,\dots,\rho_m^2, \rho_i\rho_j=\rho_j\rho_i,
\text{ if }F_i\cap F_j\text{ is a proper edge}).
$$
This group is isomorphic to a subgroup $G(P)$ of isometries of the ambient space generated by reflections in facets of $P$,
where $\rho_i$ corresponds to a reflection in $F_i$. The group $G(P)$ acts on the ambient space discretely, and the polytope is a fundamental domain, that is the interiors of the polytopes $\{gP\}_{g\in G(P)}$ do not intersect. Also $P$ is an orbit space, and for any point of $P$ its stabilizer is generated by reflections in facets containing this point (see \cite[Theorem 1.2 in Chapter 5]{VS88}).

Given a mapping  $\Lambda\colon\{F_1,\dots,F_m\}\to\mathbb Z_2^r\setminus\{0\}$ such that
for any proper edge $F_i\cap F_j$ and any proper vertex  $F_i\cap F_j\cap F_k$ the images of the 
corresponding facets are linearly independent and ${\rm Im\,}\varphi_{\Lambda}$ spans $\mathbb Z_2^r$
define a mapping $\varphi_{\Lambda}\colon G(P)\to \mathbb Z_2^r$
by the rule $\varphi_{\Lambda}(\rho_i)=\Lambda(F_i)$.  

The subgroup ${\rm Ker\,} \varphi_{\Lambda}$ acts freely on the ambient space (excluding points at infinity), 
and the quotient space $N(P,\Lambda)$ 
is a finite volume manifold glued of $2^r$ copies of the polytope. This manifold automatically has a geometric structure modelled on the ambient space.

If $P$ is a compact polytope, then the mapping $\Lambda(F_i)=e_i\in\mathbb Z_2^m$ produces
$N(P,\Lambda)=\mathbb R\mathcal{Z}_P$, and a vector-colouring $\Lambda$ produces $N(P,\Lambda)=M(P,\Lambda)$.
The inverse homeomorphism is given by the mapping $(p,t)\to \varphi^{-1}_{\Lambda}(t)\cdot p$. 
\end{construction}
Remind that the definitions of a nested family of belts, a nested family of curves, and cutting a polytope along a belt and a nested family of belts were given in Definitions \ref{NSB}, \ref{def:nsc}, and Construction \ref{con:bcut} together with Remark \ref{famcut} respectively. Using these notions we 
can formulate our main result.
\begin{theorem}\label{M-th}
Let $P$ be a simple flag $3$-polytope different from $I^3$. Then
\begin{enumerate}
\item There is a unique nested family of $4$-belts such that
cutting  $P$ along all these belts gives 
\begin{enumerate} 
\item almost Pogorelov polytopes without adjacent quadrangles, and 
\item $k$-prisms, $k\geqslant 5$, where any two adjacent prisms ``are twisted'', that is if two prisms have quadrangles corresponding to the same belt in $P$,
then their bases correspond to different facets of this belt. 
\end{enumerate}
We call the belts from this family canonical. We call a quadrangle of the obtained almost Pogorelov polytope free, if 
it does not correspond to canonical belts. 
\item For any  orientable manifold $M(P,\Lambda)$ the family of the canonical belts gives  
the $JSJ$-decomposition and the minimal geometric decomposition in the following way:
\begin{enumerate}
\item there is a geometric realization of $P$ in $\mathbb R^3$ such that plane sections of the canonical belts are disjoint quadrangles bounded by curves forming a nested family; 
\item the deletion of these quadrangles and all the quadrangles of $P$ corresponding to free quadrangles 
decomposes $P$ into a disjoint union of parts $Q_i$, $i=1,2,\dots$. Each $Q_i$ is homeomorphic to a right-angled polytope of 
finite volume either in $\mathbb {H}^2\times \mathbb R$ (if $Q_i$ corresponds to a prism), or  $\mathbb{H}^3$ 
(if $Q_i$ corresponds to an almost Pogorelov polytope), and has an induced vector-colouring
$\Lambda_{Q_i}$;
\item  the $JSJ$-tori of $M(P,\Lambda)$ are the connected components of the preimages of 
\begin{enumerate}
\item the quadrangles of  the canonical belts;
\item the quadrangles $F_j$ of $P$ corresponding to free quadrangles such that the colouring vector $\Lambda_j$ does not lie in the subspace spanned by the vectors of adjacent facets;
\item\label{III}  slight shifts inside $P$ of  the quadrangles $F_j$ of $P$ corresponding to free quadrangles such that $\Lambda_j$ lies
in the subspace spanned by the vectors of adjacent facets.
\end{enumerate}
Thus, the set of $JSJ$ tori is divided into three types. Each torus of type $(iii)$ is a boundary of a tubular neighbourhood of the Klein bottle, which is a connected component of the preimage of $F_j$;
\item the tori of types $(i)$ and $(ii)$, and the Klein bottles corresponding to the tori of type $(iii)$ give the minimal geometric decomposition. Each piece of the complement to these surfaces in $M(P,\Lambda)$ is homeomorphic to a manifold $N(Q_i,\Lambda_i)$ with the geometric structure  of finite volume given by Construction \ref{MVconstr}.
\end{enumerate}
\end{enumerate}
\end{theorem}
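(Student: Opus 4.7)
The proof will follow the organizational outline announced in the introduction, splitting into three conceptual stages: the combinatorial decomposition of the polytope, the identification of the geometric pieces, and the verification that the resulting tori and Klein bottles are precisely the $JSJ$/geometric decomposition surface.

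The plan for Part $(1)$ is to build the canonical family of $4$-belts iteratively. Starting from the flag polytope $P$, I would pick any $4$-belt $\mathcal{B}$ that either is non-trivial or is trivial but the surrounded quadrangle is adjacent to another quadrangle of $P$, then cut $P$ along $\mathcal{B}$ using Construction \ref{con:bcut}. Lemma \ref{Piblemma} and its quadrangular analogue (together with the fact that cutting a flag polytope along a $4$-belt yields flag pieces whenever the belt is nontrivial) would show that the complexity, measured by the number of facets plus the number of ``bad'' $4$-belts, strictly decreases, so the process terminates with pieces that are either $k$-prisms ($k\geqslant5$) or almost Pogorelov polytopes without adjacent quadrangles. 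The ``twisted'' property for adjacent prisms comes for free: two successive $k$-prisms that are not twisted would combine into a single larger prism, contradicting maximality of the belt family. Uniqueness would follow by arguing that any nested $4$-belt family satisfying the conclusion produces, via Lemma \ref{lem:iso}, combinatorially equivalent nested curve systems, and that two distinct such families would force an almost Pogorelov piece or prism piece to contain a further $4$-belt of the required separating type -- a contradiction.

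For Part $(2)$, I would first apply Corollary \ref{4bplane} to realize $P$ geometrically so that the canonical belts appear as disjoint planar quadrangular sections. Each component $Q_i$ of the complement is then combinatorially a $k$-prism or an almost Pogorelov polytope; by Andreev's theorem together with the results quoted after Construction \ref{MVconstr}, $Q_i$ carries a unique right-angled realization of finite volume in $\mathbb{H}^2\times\mathbb{R}$ (prism case) or $\mathbb{H}^3$ (almost Pogorelov case, after collapsing the free and canonical quadrangles to ideal vertices). Using the restriction $\Lambda_{Q_i}$ of $\Lambda$ and Construction \ref{MVconstr}, each piece $N(Q_i,\Lambda_{Q_i})$ inherits a geometric structure of finite volume, Seifert fibered in the $\mathbb{H}^2\times\mathbb{R}$ case and hyperbolic in the $\mathbb{H}^3$ case. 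Incompressibility of the submanifolds over the canonical belts and over free quadrangles is handled by Propositions \ref{Binprop} and \ref{Finprop}, based on Panov's retraction of $\mathbb{R}\mathcal{Z}_P$ onto $\mathbb{R}\mathcal{Z}_{K}$ for full subcomplexes $K\subset K_P$.

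The remaining step is to apply the $JSJ$ criterion (Proposition \ref{JSJcrit}). I would check that across every canonical belt the two adjacent Seifert pieces have distinct regular fibers in $PH_1(T)$: when both pieces are $k$-prisms, the ``twisted'' condition exactly says the $S^1$-fibers of the two $\mathbb{H}^2\times\mathbb{R}$ structures hit transverse classes on $T$; when one side is hyperbolic, Theorem \ref{hypnoSei} rules out a Seifert structure there, so the criterion is automatic. The case $M_i=T^2\times I$ is excluded because the pieces come from $k$-prisms or almost Pogorelov polytopes glued to $\mathbb{R}\mathcal{Z}_{Q_i}$-type manifolds, none of which are $T^2\times I$, and $M(P,\Lambda)$ cannot be a $Sol$-manifold since such a manifold would force $P$ itself to be a single prism or have trivial $JSJ$ structure of the excluded type. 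Finally, Proposition \ref{GDP} converts the $JSJ$ collection into the minimal geometric decomposition: each $JSJ$ torus that cobounds a $K^2\widetilde{\times}I$-piece (which happens exactly at a free quadrangle whose colour $\Lambda_j$ lies in the span of the adjacent colours -- the orientability condition then forces the preimage of $F_j$ to be a Klein bottle with orientable tubular neighbourhood $K^2\widetilde{\times}I$) must be replaced by the core Klein bottle, matching the classification in $(iii)$.

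The main obstacle I anticipate is the uniqueness part of $(1)$ together with the verification of the $JSJ$ fiber condition at canonical belts separating two prism pieces: one must rule out that a ``non-twisted'' extension of Seifert fibrations could fuse two adjacent pieces into a single larger Seifert manifold, which combinatorially would correspond to a canonical belt that is in fact not canonical. This is precisely why the ``twisted'' condition appears in the statement, and the proof hinges on translating this combinatorial twist into the inequality $f(M_r,T_i)\neq f(M_s,T_i)$ on the regular fiber classes via the explicit Mednykh--Vesnin product structure on each prism piece.
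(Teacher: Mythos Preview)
Your overall architecture matches the paper's proof closely: the paper also proceeds by (i) an algorithmic construction of the canonical $4$-belt family followed by a uniqueness argument, (ii) incompressibility via Propositions~\ref{Binprop} and~\ref{Finprop}, and (iii) verification of the $JSJ$ criterion (Proposition~\ref{JSJcrit}) together with Proposition~\ref{GDP}.

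There are, however, two places where your sketch does not yet supply the key idea.

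\emph{Existence algorithm in Part~(1).} Your rule ``pick any $4$-belt $\mathcal{B}$ that is either nontrivial, or trivial but the surrounded quadrangle is adjacent to another quadrangle'' does not terminate in the required pieces: cutting along a \emph{trivial} $4$-belt around a single quadrangle produces a copy of $I^3$ (a $4$-prism), which is neither a $k$-prism with $k\geqslant5$ nor an almost Pogorelov polytope without adjacent quadrangles. The paper avoids this by first cutting along the belt surrounding a \emph{maximal chain} of adjacent quadrangles (Step~1), yielding a $k$-prism with $k\geqslant5$, and only afterwards cutting along nontrivial $4$-belts that contain no quadrangles (Step~2, using a separate lemma that such a belt always exists when a nontrivial $4$-belt exists). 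Your complexity measure ``facets plus bad $4$-belts'' also needs adjustment, since cutting along a $4$-belt adds a facet to each piece.

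\emph{Uniqueness in Part~(1) and exclusion of exceptional Seifert pieces in Part~(2).} Your uniqueness sketch (``two distinct families would force a piece to contain a further separating $4$-belt'') hides the real work. The paper proves a substantial combinatorial lemma (Lemma~\ref{4bRlemma}): every canonical $4$-belt contains no quadrangles, and every $4$-belt of $P$ not in the family $\mathcal{F}$ is compatible with all of $\mathcal{F}$ and lands in a single piece $R_i$. Only with this in hand can one show $\mathcal{F}\cup\mathcal{F}'$ is nested and derive the contradiction via the twisted condition. Similarly, your assertion that prism pieces are ``none of which are $T^2\times I$'' needs justification: the paper proves this via Lemma~\ref{Sgslemma}, a fundamental-group argument showing that $S^2_{g,s}\times S^1$ with $s\geqslant3$ cannot cover any manifold homotopy equivalent to $D^2\times S^1$, $T^2\times I$, or $K^2\widetilde{\times}I$ (because $\pi_1$ contains a free subgroup of rank~$2$). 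You correctly identify that the twisted condition translates into $f(M_r,T_i)\neq f(M_s,T_i)$, and this part of your plan matches the paper.
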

\begin{remark}
There are two extreme cases, when the set of canonical belts is empty.
The first case is when $P$ is a $k$-prism, $k\geqslant 4$. Then the orientable manifold $M(P,\Lambda)$ is a compact 
closed Seifert fibered manifold, which already has a geometric structure. 
For $k=4$ this is $\mathbb R^3$, and for $k\geqslant 5$
this is $\mathbb H^2\times\mathbb R$.  The set of $JSJ$-tori in item (2) in this case is empty.
The second case is when $P$ is an almost Pogorelov polytope without adjacent quadrangles. If $P$ is a Pogorelov polytope, 
then the orientable manifold $M(P,\Lambda)$ is a compact closed hyperbolic manifold, and the set of $JSJ$-tori in item (2) is
also empty. If $P$ has quadrangles, then in item (2) each quadrangle corresponds to $JSJ$-tori, and $M(P,\Lambda)$
is divided into homeomorphic pieces with hyperbolic structure of finite volume.
\end{remark}
\begin{remark}
Item (1) of this theorem is a partial case of a more general decomposition of so-called Coxeter orbifolds in \cite{S09}. One should take a right-angled Coxeter orbifold.
\end{remark}
\begin{remark}
Item (1) is equivalent to the fact that $P$ can be uniquely decomposed as a connected sum 
of $k$-prisms, $k\geqslant5$, and almost Pogorelov polytopes without adjacent quadrangles in such a way that
adjacent prisms are twisted. Indeed, cutting along a $4$-belt is an inverse operation to a connected sum along quadrangles. Therefore, item (1) automatically gives the described decomposition. On the other hand, if we take a collection of  $k$-prisms, $k\geqslant 5$, and almost Pogorelov polytopes without adjacent quadrangles and start taking connected sums of these polytopes along quadrangles following the twisted prisms rule, 
then at each step the arising $4$-belt can not contain quadrangles. In particular, the arising $4$-belts
form a nested family and each quadrangle
of the new polytope corresponds to a quadrangle of a unique summand. This is clear for the case of two polytopes. Assume that 
we have proved this fact for each polytope obtained as a connected sum of less than $k$ polytopes from the collection. 
Consider the case of $k$ polytopes. The last connected sum involves two polytopes $Q_1$ and $Q_2$ 
each being either a polytope from the collection or a connected sum of less than $k$ such polytopes. 
By induction the chosen quadrangles on $Q_1$ and $Q_2$ correspond to uniquely defined polytopes $P_1$ and $P_2$
from the collection. If the $4$-belt arising in the last connected sum contains a quadrangle,
then $P_1$ and $P_2$ are prisms. Since they should be twisted, we obtain a contradiction.
\end{remark}

We will prove theorem Theorem \ref{M-th} is several steps.

\subsubsection{The decomposition of a flag polytope.}\label{flagdec}
We start with a lemma.
\begin{lemma}
Let a simple $3$-polytope $P$ be a connected sum of polytopes $P_1$ and $P_2$ along 
$k$-gons surrounded by belts, $k\geqslant 4$. Then $P$ is flag if and only if $P_1$ and $P_2$ are flag.
\end{lemma}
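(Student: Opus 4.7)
The plan is to use the fact that a simple $3$-polytope is flag if and only if it is different from $\Delta^3$ and has no $3$-belts, together with the observation that the connected sum along $k$-gons surrounded by belts is the inverse operation to cutting along a $k$-belt (Construction \ref{con:bcut} and the remark after it). So if we write $P=P_1\#P_2$, there is a distinguished $k$-belt $\mathcal{B}$ of $P$ with $k\geqslant 4$ such that cutting $P$ along $\mathcal{B}$ yields exactly $P_1$ and $P_2$, and we can then apply Lemma \ref{Piblemma} to transfer information about $3$-belts between $P$ and the $P_i$.

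First I would handle the simplex condition. Since $P$ contains a $k$-belt with $k\geqslant 4$ it cannot be $\Delta^3$ (whose only belts are the four trivial $3$-belts around its facets), and since each $P_i$ contains a $k$-gonal facet with $k\geqslant 4$ neither $P_i$ is $\Delta^3$. Thus the question reduces to whether $P$ has a $3$-belt if and only if some $P_i$ does. The direction "$P$ flag $\Rightarrow$ $P_i$ flag" is immediate from Lemma \ref{Piblemma}: any $3$-belt of $P_i$ corresponds bijectively to a $3$-belt of $P$ whose facets are presented in $P_i$, so the absence of $3$-belts in $P$ forces the same in each $P_i$.

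For the converse, assume both $P_1$ and $P_2$ are flag and suppose, for contradiction, that $\mathcal{L}=(F_a,F_b,F_c)$ is a $3$-belt of $P$. The three facets of $\mathcal{L}$ are pairwise adjacent in $P$. I would argue by case analysis using the partition of $\partial P$ by $\gamma(\mathcal{B})$ into the two components $C_1(\mathcal{B})$ and $C_2(\mathcal{B})$. First, $\mathcal{L}$ cannot lie entirely inside $\mathcal{B}$: by the definition of a belt, only cyclically consecutive facets of $\mathcal{B}$ are adjacent, so three pairwise adjacent facets of $\mathcal{B}$ would force $k=3$, contradicting $k\geqslant 4$. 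Second, $\mathcal{L}$ cannot contain both a facet $F\subset C_1\setminus\mathcal{B}$ and a facet $F'\subset C_2\setminus\mathcal{B}$, because any edge of $P$ joining such $F$ and $F'$ would have to cross $\gamma(\mathcal{B})$; but then $F$ and $F'$ would not be adjacent, contradicting the fact that all three facets of a $3$-belt are pairwise adjacent. Therefore every facet of $\mathcal{L}$ lies in $\mathcal{B}\cup C_i$ for a common $i$, i.e.\ $\mathcal{L}$ consists of facets presented in $P_i$. By Lemma \ref{Piblemma} this yields a $3$-belt of $P_i$, contradicting the flag hypothesis on $P_i$.

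The main obstacle I anticipate is not technical depth but making sure the case analysis in the backward direction is airtight --- in particular, being explicit that the belt property "facets are adjacent iff they follow each other" forbids three pairwise adjacent facets inside a $k$-belt with $k\geqslant 4$, and that facets in the open complements $C_i\setminus\mathcal{B}$ on opposite sides of $\mathcal{B}$ share no edge in $P$. Everything else is a direct application of Lemma \ref{Piblemma}.
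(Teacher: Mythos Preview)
Your proof is correct and follows essentially the same approach as the paper, which dispatches the lemma in one line as ``a direct consequence of Lemma \ref{Piblemma} and Corollary \ref{Pibcor}.'' Your case analysis for the backward direction is just an explicit unpacking of the content of Corollary \ref{Pibcor} (adapted to the situation where $\mathcal{B}$ is a $k$-belt with $k\geqslant 4$ rather than a $3$-belt): a $3$-belt of $P$ must have its middle line in one of the closures $\overline{C_i(\mathcal{B})}$, hence consist of facets presented in a single $P_i$, and then Lemma \ref{Piblemma} transports it to a $3$-belt of $P_i$.
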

\begin{proof}
This is a direct consequence of Lemma \ref{Piblemma} and Corollary \ref{Pibcor}.
\end{proof}

In the case of $4$-belts the situation is more complicated then for $3$-belts. 
It is possible that the middle lines of $4$-belts intersect transversally. For example, for any $k$-prism, $k\geqslant 4$, there are such $4$-belts (in particular, the belts around adjacent quadrangles). As we will see below, this is almost the general 
``bad'' case.

{\bf Algorithm  for the decomposition.} 
First we will find algorithmically the decomposition (1) of Theorem \ref{M-th}, and then prove that it is unique.

{\bf Step 1.}  We cut off all sequences of adjacent quadrangles. 
\begin{lemma}
If a vertex of a flag $3$-polytope $P$ belongs to three quadrangles, then $P=I^3$.
\end{lemma}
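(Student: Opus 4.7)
The plan is to use flagness to locate a vertex ``antipodal'' to $v$ and then recognise the combinatorial cube. Let $v=F_1\cap F_2\cap F_3$ with each $F_i$ a quadrangle. I would denote by $a_{ij}$ the other endpoint of the edge $F_i\cap F_j$, and by $b_i$ the vertex of $F_i$ opposite to $v$ (so $F_1$ is the $4$-cycle $v,a_{12},b_1,a_{13}$, and analogously for $F_2,F_3$). At each $a_{ij}$ there is a unique third facet $H_{ij}\notin\{F_i,F_j\}$, adjacent to both $F_i$ and $F_j$ along one additional edge each.

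First I would check that $H_{12},H_{13},H_{23}$ are pairwise distinct and different from $F_1,F_2,F_3$, using the standard fact that in a simple $3$-polytope any two facets meet in at most one edge: $H_{12}=H_{13}$ would force $F_1$ to share with this common facet the two distinct edges $a_{12}b_1$ and $a_{13}b_1$; and $H_{ij}=F_k$ would place all three of $F_1,F_2,F_3$ at $a_{ij}$, forcing $a_{ij}=v$. Since $b_1$ is the common endpoint of the edges $F_1\cap H_{12}$ and $F_1\cap H_{13}$, we have $b_1\in H_{12}\cap H_{13}$, and symmetrically $b_2\in H_{12}\cap H_{23}$, $b_3\in H_{13}\cap H_{23}$. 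The three facets $H_{12},H_{13},H_{23}$ therefore pairwise intersect, so by flagness of $P$ they share a common vertex $c$. This $c$ is distinct from every $b_i$, since the three facets at $b_i$ are $F_i,H_{ij},H_{ik}$ and do not include $H_{jk}$.

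It remains to verify that the eight vertices $v,a_{12},a_{13},a_{23},b_1,b_2,b_3,c$ and the six quadrangles $F_1,F_2,F_3,H_{12},H_{13},H_{23}$ exhaust $\partial P$. This is a bookkeeping step: the six quadrangles contribute $6\cdot 4/2=12$ edges, Euler's formula $8-12+6=2$ is already saturated, and the incidence pattern matches the cube, so $P\simeq I^3$. The only conceptually nontrivial point is the appeal to the flag condition to produce the antipodal vertex $c$; once $c$ exists, the rest is a direct local inspection.
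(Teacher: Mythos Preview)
Your argument is correct and follows essentially the same route as the paper's proof, only spelled out in much greater detail. The paper's three-line argument observes that the three quadrangles are ``surrounded by a sequence of $3$ pairwise adjacent facets'' (your $H_{12},H_{13},H_{23}$), invokes flagness to force a common vertex, and concludes $P=I^3$; you carry out the same plan explicitly, including the verification that the $H_{ij}$ are distinct and the final Euler-count. One small presentational point: in your last paragraph you refer to ``the six quadrangles'' before having argued that the $H_{ij}$ are themselves quadrangles---the cleanest way to close is to note that at each of the eight vertices all three incident edges stay within the set, so by connectedness of the $1$-skeleton these are all the vertices of $P$, whence $V=8$, $E=12$, $F=6$ and the incidence pattern is that of the cube.
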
 
\begin{proof}
Indeed, these three quadrangles are surrounded by a sequence of $3$ pairwise
adjacent facets. Since $P$ has no $3$-belts, they should have a common vertex. Hence, $P=I^3$.
\end{proof}

Assume that $P\ne I^3$. Then each its vertex belongs to at most two quadrangles. Let there be two adjacent 
quadrangles $F_i$ and $F_j$.  If a quadrangle $F_q$ is adjacent to one of them, then their edge of intersection should be 
opposite to $F_i\cap F_j$ in this facet. Thus, moving in each direction we obtain a sequence of quadrangles with ``parallel''
edges of intersection. This sequence either becomes a ring, or stops at some moment. In the first case $P$ is 
a $k$-prism for $k\geqslant 4$. In the second case let us cut the polytope along the $4$-belt 
surrounding a sequence of quadrangles to obtain a $k$-prism and some new polytope $P_1$. 
For $P_1$ apply the same procedure. Since the belts we cut along do not pass quadrangles,
we see that all the arising belts form a nested family. In the end we obtain a polytope $Q$ without adjacent quadrangles 
or a $k$-prism, $k\geqslant 5$. Each step decreased the number of facets at least by one. 
If $Q$ has only trivial $4$-belts or $Q$ is a $k$-prism we move to Step 3. Otherwise, we move to Step 2. 

{\bf Step 2.} We cut a flag $3$-polytope without adjacent quadrangles along a nontrivial $4$-belt passing no quadrangles. 
\begin{lemma}
Let $Q$ be a flag $3$-polytope without adjacent quadrangles. Then either $Q$ has only trivial $4$-belts, or
it has a $4$-belt passing no quadrangles.
\end{lemma}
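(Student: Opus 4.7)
The plan is to pick, among all non-trivial $4$-belts of $Q$ (if any exist), one that minimizes the number of quadrangular facets, and argue that this minimum must be $0$. If $Q$ has no non-trivial $4$-belt the first alternative of the lemma holds, so I assume otherwise and fix a non-trivial $4$-belt $\mathcal{B}=(F_1,F_2,F_3,F_4)$ realising the minimum. Suppose for contradiction this number is positive, and say $F_1$ is a quadrangle. Let $F_a,F_b$ denote the two other neighbours of $F_1$, so that the trivial belt around $F_1$ reads $(F_2,F_a,F_4,F_b)$; by the no-adjacent-quadrangles hypothesis none of $F_2,F_4,F_a,F_b$ is a quadrangle.

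The core combinatorial step is a ``belt-shift through $F_1$'': I want to replace $F_1$ by $F_a$ or $F_b$ and obtain a new $4$-belt with strictly fewer quadrangles. For this I first claim $F_a\not\sim F_3$ (and symmetrically $F_b\not\sim F_3$). Indeed, if $F_a\sim F_3$, flagness applied to the pairwise adjacent triples $\{F_a,F_2,F_3\}$ and $\{F_a,F_3,F_4\}$ produces vertices $F_a\cap F_2\cap F_3$ and $F_a\cap F_3\cap F_4$; together with $F_a\cap F_1\cap F_2$ and $F_a\cap F_1\cap F_4$ coming from the quadrangle $F_1$, simplicity (each vertex lies in exactly three facets) forces the boundary of $F_a$ to close up along precisely the four edges $F_a\cap F_j$, $j=1,2,3,4$, so $F_a$ is a quadrangle adjacent to the quadrangle $F_1$, a contradiction. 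Given $F_a\not\sim F_3$ and $F_b\not\sim F_3$, a direct verification of the belt axioms shows that $\mathcal{B}_a:=(F_a,F_2,F_3,F_4)$ and $\mathcal{B}_b:=(F_b,F_2,F_3,F_4)$ are $4$-belts, each containing strictly fewer quadrangles than $\mathcal{B}$ (since $F_a,F_b$ are not quadrangles, while $F_1$ was).

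By minimality, neither $\mathcal{B}_a$ nor $\mathcal{B}_b$ can be non-trivial, so each surrounds a facet, say $F_c$ and $F_d$, both of which are necessarily quadrangles. The case $F_c=F_d$ is excluded because this single quadrangle would then be adjacent to the five distinct facets $F_a,F_b,F_2,F_3,F_4$, impossible. In the remaining case $F_c\ne F_d$, the same closing-up argument applied now to $F_3$ shows that its boundary consists of exactly the four edges $F_3\cap F_2,F_3\cap F_c,F_3\cap F_4,F_3\cap F_d$: the endpoints of each such edge are forced by the trivial belts around $F_c$ and $F_d$ together with flagness. Thus $F_3$ is a quadrangle adjacent to the quadrangle $F_c$, again a contradiction. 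Hence the minimum is $0$, producing the desired non-trivial $4$-belt passing no quadrangles.

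The main obstacle is the two closing-up arguments for the boundary cycles of $F_a$ and of $F_3$: one must argue that flagness, combined with the fact that each vertex of a simple $3$-polytope lies in exactly three facets, leaves no room for additional edges to interleave between the four forced ones. Once this point is justified, the remainder is a clean case analysis on whether the shifted belts are trivial and whether their two interior facets coincide.
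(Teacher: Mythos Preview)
Your proof is correct and uses the same core ``belt-shift'' idea as the paper: replace the quadrangular facet $F_1$ in a nontrivial $4$-belt by one of its remaining neighbours $F_a,F_b$ to obtain $4$-belts with strictly fewer quadrangles. Your verification that $F_a\not\sim F_3$ (else $F_a$ closes up to a quadrangle adjacent to $F_1$) matches the paper's argument that otherwise $\mathcal{B}$ would be the trivial belt around $F_a$; both closing-up arguments are sound because once you have four vertices of the prescribed form on $\partial F_a$ (respectively $\partial F_3$), each of the four candidate edges is pinned down at both endpoints by simplicity, and the boundary of a facet is a single cycle.

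The only organisational difference is this: the paper argues \emph{directly} that the shifted belt $\mathcal{B}_1=(F_p,F_j,F_k,F_l)$ is nontrivial (if it surrounded a quadrangle $F_c$, then $F_p$ itself would close up to a quadrangle adjacent to the quadrangle $F_i$), and then iterates once more if $F_k$ happens to be a quadrangle. You instead run a minimality argument, which forces \emph{both} shifted belts to be trivial and leads you to the extra case analysis on $F_c,F_d$ and the closing-up of $F_3$. The paper's route is a touch shorter since one trivial shifted belt already yields the contradiction, but your packaging via minimality is perfectly valid and self-contained.
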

\begin{proof}
Let $\mathcal{B}=(F_i,F_j,F_k,F_l)$ be a nontrivial $4$-belt. 
Then it contains at most two quadrangles. Let $F_i$ be one of them. In the closure of
each component of the complement $\partial Q\setminus|\mathcal{B}|$ the quadrangle $F_i$ is adjacent to some 
facets $F_p$ and $F_q$ such that
$(F_j,F_p,F_l,F_q)$ is a $4$-belt (each facet of a flag $3$-polytope is surrounded by a belt). If $F_p\cap F_k\ne\varnothing$,
then $F_p\cap F_k\cap F_j$ and $F_p\cap F_k\cap F_l$ are vertices, and $\mathcal{B}$ is a belt around $F_p$. 
A contradiction. Similarly, $F_q\cap F_k=\varnothing$. Thus, $\mathcal{B}_1=(F_p,F_j,F_k,F_l)$
is a $4$-belt and $F_p$, $F_j$ and $F_l$ are not quadrangles. The belt $\mathcal{B}_1$ is nontrivial, for otherwise 
$\mathcal{B}$ surrounds two adjacent quadrangles. Similarly, 
$\mathcal{B}_2=(F_q,F_j,F_k,F_l)$ is a nontrivial $4$-belt. If $F_k$ is a quadrangle, then we can apply to it a similar argument 
to obtain nontrivial belts containing no quadrangles. 
\end{proof}
Thus, if $Q$ is not an almost Pogorelov polytope and not a $k$-prism, then we can cut it along a belt containing no quadrangles
to obtain two flag polytopes $Q_1$ and $Q_2$ with less number of facets. If each polytope $Q_i$ is an almost Pogorelov polytope or a $k$-prism, then we move to Step 3. Otherwise, for each polytope $Q_i$ we move to Step 1.

{\bf Step 3.} Now we have cut a polytope $P$ along a nested set of $4$-belts into almost Pogorelov polytopes without adjacent 
quadrangles and $k$-prisms,
$k\geqslant 5$ (the case $k=4$ is impossible, since it corresponds to cutting along a trivial $4$-belt). This corresponds to a representation of $P$ as a connected sum of the corresponding polytopes along quadrangles. Assume that in this decomposition
two prisms are glued along quadrangles. If their bases are glued along edges, then we  obtain a prism again. We delete the corresponding belt from a nested family. Repeating this argument in the end we come to 
the case when any two adjacent prisms are ``twisted''. Here the algorithm stops.

{\bf Uniqueness of the decomposition.} Now let us prove that the decomposition from item (1)
of Theorem \ref{M-th} is unique.  Similarly to the case of $3$-belts we can combinatorially represent the decomposition as follows. 
For the $4$-belts we take the curves $\widehat{\gamma}(\mathcal{B})$ from Construction \ref{con:hgamma}.
Then we cut $\partial P$ along these curves and glue up each curve by a quadrangle to obtain boundaries of the polytopes $R_i$.
Again for a facet $F_a$ of $P$ we denote by $\widehat{F_a}$ the corresponding facet of $R_i$. The facet 
$\widehat{F_a}$ can be considered as a part of $F_a$, and $F_a$ is assembled from such parts.


\begin{lemma}\label{4bRlemma}
Let $\mathcal{F}$ be a nested family of $4$-belts on a flag $3$-polytope $P\ne I^3$ such that cutting along all of these 
belts gives polytopes $R_i$, which are either $k$-prisms, $k\geqslant 5$, or almost Pogorelov polytopes without 
adjacent quadrangles, and any two adjacent prisms ``are twisted''. Then 
\begin{enumerate}
\item any $4$-belt in $\mathcal{F}$ contains no quadrangles;
\item any $4$-belt of $P$ either belongs to $\mathcal{F}$ or is compatible with all the belts in $\mathcal{F}$. 
In the latter case it consists of facets presented simultaneously in a unique polytope $R_i$, where they form a $4$-belt.
\end{enumerate}
\end{lemma}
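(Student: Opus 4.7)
The plan is to prove both items by contradiction, exploiting the restricted structure of $4$-belts in the pieces $R_i$: in a $k$-prism with $k\geq 5$ every $4$-belt has the alternating form $(B_1,S_i,B_2,S_j)$ with two bases and two non-adjacent side quadrangles, and in an almost Pogorelov polytope without adjacent quadrangles every $4$-belt is trivial and surrounds a quadrangle.

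For item (1), I would suppose $\mathcal{B}=(F_1,F_2,F_3,F_4)\in\mathcal{F}$ contains a quadrangle $F_1$. The two $\mathcal{B}$-edges of $F_1$ are non-adjacent in $F_1$ (no three belt facets share a vertex), so the cut along $\gamma(\mathcal{B})$ splits $F_1$ into two quadrangles. In each of $R_s,R_{s'}$ the piece $\widehat{F_1}$ is therefore a quadrangle adjacent to the new quadrangular facet $F$. If a piece is almost Pogorelov, this gives a forbidden pair of adjacent quadrangles. If a piece is a $k$-prism, then $F$ (a $4$-gon) cannot be a $k$-gonal base and must be a side quadrangle, so the belt around $F$ alternates side-base-side-base; since $\widehat{F_1}$ is a quadrangle, it lies in a side position, forcing the bases of $R_s$ adjacent to $F$ to correspond to $\{F_2,F_4\}$. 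When $R_{s'}$ is also a prism, the twisted condition forces its bases adjacent to $F$ to correspond to $\{F_1,F_3\}$; but $\widehat{F_1}$ is still a quadrangle in $R_{s'}$, making $R_{s'}$ into a $4$-prism, contradicting $k\geq 5$.

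For item (2), let $\mathcal{B}'\notin\mathcal{F}$ be a $4$-belt. I will first show compatibility with every $\mathcal{B}\in\mathcal{F}$. Assume incompatibility with some $\mathcal{B}$ and classify the cyclic pattern of positions of $G_1,\ldots,G_4$, where each position is one of: strictly in $C_s$, strictly in $C_{s'}$, or in $\mathcal{B}$. Because a non-$\mathcal{B}$-edge lies entirely in one of $\overline{C_s},\overline{C_{s'}}$, no edge joins a facet strictly in $C_s$ to one strictly in $C_{s'}$; together with the incompatibility requirement this leaves only the pattern $(s,b,s',b)$ up to rotation. Hence $G_2,G_4\in\mathcal{B}$ are non-consecutive in $\mathcal{B}'$, so non-adjacent in $P$, hence opposite in $\mathcal{B}$. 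In $R_s$ the cycle $(\widehat{G_2},\widehat{G_1},\widehat{G_4},F)$ is a $4$-belt; in the almost Pogorelov case this belt would surround a quadrangle adjacent to the quadrangle $F$, contradicting no-adjacent-quadrangles; in the prism case the alternating form of $4$-belts forces $\{\widehat{G_2},\widehat{G_4}\}$ to be the two bases of $R_s$ meeting $F$. Repeating this in $R_{s'}$, the almost Pogorelov case again contradicts no-adjacent-quadrangles, and in the prism-prism case both base pairs must equal $\{G_2,G_4\}$, contradicting the twisted condition.

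Once compatibility is established, $\gamma(\mathcal{B}')$ lies in the closure of a single connected component of $\partial P\setminus\bigcup_{\mathcal{B}\in\mathcal{F}}\gamma(\mathcal{B})$, which corresponds to a unique $R_i$; every facet of $\mathcal{B}'$ is presented in this $R_i$, and the $4$-belt structure transports to $R_i$ by Lemma \ref{intlemma} together with the fact that cutting introduces new adjacencies only between $\widehat{F_a}$ (for $F_a\in\mathcal{B}$) and the new facet $F$. The main obstacle I expect is the classification in step (2) and the careful pairing of bases across $R_s$ and $R_{s'}$ that triggers the twisted-condition contradiction; the almost Pogorelov cases and the argument for item (1) then fall into place via the same structural dichotomy.
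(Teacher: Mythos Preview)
Your overall strategy matches the paper's, and your treatment of item~(1) and the pattern classification in item~(2) are essentially correct. There is, however, one genuine gap in item~(2).

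When you write ``In $R_s$ the cycle $(\widehat{G_2},\widehat{G_1},\widehat{G_4},F)$ is a $4$-belt'', you are assuming that $G_1$ is presented in the piece $R_s$ adjacent to $\mathcal{B}$ on the $C_s$ side. But knowing only that $G_1\subset C_s$ is not enough: there may be further belts $\mathcal{B}''\in\mathcal{F}$ whose curves lie in $C_s$ and separate $G_1$ from $R_s$. In that case $\widehat{G_1}$ is simply not a facet of $R_s$, and your cycle does not exist. The paper addresses this explicitly. It first shows that in $R_s$ the new quadrangle $F$ is the \emph{only} quadrangle adjacent to both $\widehat{G_2}$ and $\widehat{G_4}$ (using either the no-adjacent-quadrangles hypothesis in the almost Pogorelov case, or the fact that in a $k$-prism with $k\geqslant 5$ two non-adjacent side quadrangles have a unique common side-neighbour). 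Hence any region separated from $R_s$ by some $\widehat{\gamma}(\mathcal{B}'')$ meets at most one of $G_2,G_4$; so if $G_1$ were not in $R_s$ it could not be adjacent to both $G_2$ and $G_4$, contradicting the belt structure of $\mathcal{B}'$. Only after this is the case ``$G_1$ presented in $R_s$'' handled the way you do. You need to supply this extra step.

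Two smaller points. In item~(1) you argue that cutting along $\mathcal{B}$ splits the quadrangle $F_1$ into two quadrangles, but the pieces $R_s,R_{s'}$ arise from cutting along \emph{all} belts in $\mathcal{F}$, some of which may also pass through $F_1$; you should observe (as the paper does) that all such curves through a quadrangle are parallel, so $\widehat{F_1}$ remains a quadrangle after every cut. Finally, after establishing compatibility, $\gamma(\mathcal{B}')$ may share edge-midpoints with curves $\gamma(\mathcal{B})$ for $\mathcal{B}\in\mathcal{F}$, so it need not lie in a single component of $\partial P\setminus\bigcup\gamma(\mathcal{B})$; one must pass to a perturbed curve $\widehat{\gamma}(\mathcal{B}')$ via Construction~\ref{con:hgamma} and Lemma~\ref{lem:disj}, as the paper does.
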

\begin{proof}
If a belt $\mathcal{B}\in\mathcal{F}$ contains a quadrangle $F_i$, then $\widehat{\gamma}(\mathcal{B})\cap F_i=E$ is a line segment and any nonempty intersection $\widehat{\gamma}(\mathcal{B}')\cap F_i\ne\varnothing$ for 
$\mathcal{B}'\in\mathcal{F}\setminus\{\mathcal{B}\}$ is a line segment "parallel" to $E$.
Thus, if $F_i$ is presented in some polytope $R_j$, then the corresponding facet $\widehat{F_i}$ 
is also a quadrangle. Let $\mathcal{B}$
separate the polytopes $R_p$ and $R_q$. Then at each of these polytopes $F_i$ corresponds to a quadrangle adjacent to the quadrangle arising from $\mathcal{B}$. Then both $R_p$ and $R_q$ are prisms.
Since they are twisted, for one of these polytopes $\widehat{F}_i$ is a base. Hence, it is not a quadrangle. A contradiction.

Let us prove that a $4$-belt on $P$ different from the belts in $\mathcal{F}$ 
can not be presented in different polytopes $R_i$ and $R_j$. Indeed, 
any two polytopes in the decomposition are separated on $P$ by some of the curves $\widehat{\gamma}(\mathcal{B})$,
$\mathcal{B}\in\mathcal{F}$. 
If a facet is presented in both polytopes, then it belongs to $\mathcal{B}$. Hence, the belt should coincide with $\mathcal{B}$. 

Now consider any $4$-belt $\mathcal{B}=(F_i,F_j,F_k,F_l)\notin\mathcal{F}$.

If $\mathcal{B}$ is compatible with any belt in $\mathcal{F}$,
then using Construction \ref{con:hgamma} and Lemma \ref{lem:disj} we can move each vertex of 
the middle line $\gamma(\mathcal{B})$
in the interior of the corresponding edge of $P$ in such a way that the new curve $\widehat{\gamma}(\mathcal{B})$ 
does not intersect the curves $\widehat{\gamma}(\mathcal{B}')$ for all $\mathcal{B}'\in\mathcal{F}$. 
In particular, it lies in the interior of 
a part of $\partial P$ corresponding to some $R_i$.
Then all the facets of $\mathcal{B}$ are presented in $R_i$.  Lemma \ref{intlemma} implies that the facets $F_a$ and $F_b$
presented in $R_i$ are adjacent in $P$ if and only if the facets $\widehat{F_a}$ and $\widehat{F_b}$ are adjacent in $R_i$. Hence,  the corresponding facets of $R_i$ also form a $4$-belt.

Assume that $\mathcal{B}$ is not compatible with some belt $\mathcal{B}'\in\mathcal{F}$ connecting polytopes $R_p$ and $R_q$. Then Lemma \ref{ncb-lemma} implies that for some relabelling of facets 
$F_j$ lies in the closure of a connected component of $\partial P\setminus|\mathcal{B}'|$ corresponding to $R_p$, and $F_l$
 -- to $R_q$. 
Then $F_i$ and $F_k$ should belong to $\mathcal{B}'$. We claim that in this case at least one of the facets 
$F_j$ and $F_l$ does not intersect one of the facets $F_i$ and $F_k$. 
Indeed, consider the polytope $R_p$.  If it is not a prism, then it is an almost Pogorelov polytope without adjacent quadrangles. 
Then there is exactly one quadrangle in $R_p$ adjacent to both $\widehat{F_i}$ and $\widehat{F_k}$, 
namely the quadrangle $F$
corresponding to the belt $\mathcal{B}'$. If there is another quadrangle $F'$ with this property, then 
$(\widehat{F_i},F,\widehat{F_k},F')$ is a $4$-belt. Since $R_p$ has only trivial $4$-belts, this belt should surround a quadrangle
adjacent  to both $F$ and $F'$. A contradiction.  
Therefore, each of the parts of $\partial P$ separated from $R_p$ by curves
$\widehat{\gamma}(\mathcal{B}'')$, where $\mathcal{B}''\in \mathcal{F}\setminus\{\mathcal{B}'\}$ are
$4$-belts presented in $R_p$, intersects at most one of the facets 
$F_i$ and $F_k$. Thus, if $F_j$ is not presented in $R_p$, it can not intersect
both $F_i$ and $F_k$. If it is presented in $R_p$, then $\widehat{F_j}$ is adjacent to both 
$\widehat{F_i}$ and $\widehat{F_k}$ in this polytope. 
The sequence $(F,\widehat{F_i},\widehat{F_j},\widehat{F_k})$
can not be a $4$-belt. For otherwise, this belt surrounds a quadrangle adjacent to $F$. Thus, $\widehat{F_j}\cap F\ne\varnothing$. Then
$F_j$ belongs to $\mathcal{B}'$. A contradiction. So, $F_j$ can not intersect both $F_i$ and $F_k$, if $R_p$ is not a prism.
Similarly, $F_l$ can not intersect both $F_i$ and $F_k$, if $R_q$ is not a prism.

If both $R_p$ and $R_q$ are prisms, then in one of them, say in $R_p$, $\widehat{F_i}$ and $\widehat{F_k}$ are quadrangles, 
and in the other, respectively $R_q$, they are bases.
In this case in $R_p$ there is only one quadrangle  adjacent to  both  $\widehat{F_i}$ and $\widehat{F_k}$, 
the quadrangle $F$ corresponding to $\mathcal{B}'$. Therefore, each of the parts of $\partial P$ separated from $R_p$ by curves
$\widehat{\gamma}(\mathcal{B}'')$, where $\mathcal{B}''\in \mathcal{F}\setminus\{\mathcal{B}'\}$ are
$4$-belts presented in $R_p$, intersects at most one of the facets 
$F_i$ and $F_k$.
Therefore, if the facet $F_j$ is not presented in $R_p$, 
it can not intersect both $F_i$ and $F_k$. If it is presented in $R_p$, then $\widehat{F_j}$ is adjacent to both $\widehat{F_i}$ and 
$\widehat{F_k}$ in this polytope. Then $\widehat{F_j}$ is a base and $F_j$ belongs to 
$\mathcal{B}'$. A contradiction. 
\end{proof}

Now let $\mathcal{F}'$  be another nested family  of $4$-belts such that cutting along all of these belts gives polytopes $R_i'$, which are either $k$-prisms, $k\geqslant 5$, or almost Pogorelov polytopes without adjacent quadrangles, and any two 
adjacent prisms ``are twisted''.  By Lemma \ref{4bRlemma} the union $\mathcal{F}'\cup \mathcal{F}$
is a nested family of belts again. Using Construction \ref{con:hgamma} we can choose a nested family of curves
corresponding to $\mathcal{F}'\cup\mathcal{F}$ as an extension of such a family for $\mathcal{F}$.
For each belt $\mathcal{B}\in \mathcal{F}'\setminus\mathcal{F}$ the curve $\widehat{\gamma}(\mathcal{B})$ 
lies on some polytope $R_i$ and corresponds to a $4$-belt $\widehat{\mathcal{B}}$ on it. Assume that $\widehat{\mathcal{B}}$
is a trivial belt. Then it can not surround a quadrangle corresponding to belts in $\mathcal{F}$. Thus, 
it surrounds some quadrangle $\widehat{F_j}\in R_i$. This quadrangle is adjacent only to facets in $\widehat{\mathcal{B}}$,
and all of them have the form $\widehat{F_k}$ for some facets $F_k$ of $P$. Hence, $F_j$ does not intersect 
the curves $\widehat{\gamma}(\mathcal{B}')$, $\mathcal{B}'\in \mathcal{F}$.  Therefore, $F_j$ is a quadrangle, as well as $\widehat{F_j}$. Then the component of $\partial P\setminus\gamma(\mathcal{B})$ containing $F_j$ can not contain 
curves $\gamma(\mathcal{B}')$ for any $4$-belt $\mathcal{B}'$ of $P$. In particular, when we cut along the belts 
in $\mathcal{F}'$ one of the polytopes $R_j'$ is a $4$-prism, which is a contradiction.
Thus, $\widehat{\mathcal{B}}$ is a nontrivial belt. Hence, the polytope $R_i$ is a prism, and $\widehat{\mathcal{B}}$ contains two
bases and two quadrangles. Consider all the belts $\mathcal{B}\in\mathcal{F}'\setminus\mathcal{F}$ corresponding to
nontrivial belts in $R_i$. If we cut $R_i$ along the corresponding curves $\widehat{\gamma}(\mathcal{B})$, we obtain a set of $k$-prisms
with $k\geqslant 5$  such that any two adjacent prisms are not twisted. Taking such cuts for all the prisms $R_i$
we obtain a set of prisms. If we add all the almost Pogorelov polytopes without adjacent quadrangles $R_j$, we obtain the 
family of polytopes arising from $P$ if we cut it along all the curves $\widehat{\gamma}(\mathcal{B})$, $\widehat{\gamma}(\mathcal{B})\in\mathcal{F}\cup\mathcal{F'}$. Now if do the same starting with $\mathcal{F}'$ and then
considering $\mathcal{F}\setminus\mathcal{F}'$, then we will obtain the same family of polytopes. 
Now consider two adjacent prisms
corresponding to the above polytope $R_i$. In the second way of cutting $P$ they arise from different prisms $R_p'$ and $R_q'$, therefore they should be twisted. A contradiction. Thus,  $\mathcal{F}=\mathcal{F}'$. 

\subsubsection{Incompressible surfaces corresponding to belts and facets.}\label{BFinc}
For a subset $\omega\subset[m]$ the simplicial complex 
$K_{\omega}=\{\sigma\in K\colon \sigma\subset\omega\}$ is called a {\it full (or induced) subcomplex}. 

In $\mathbb{R}\mathcal{Z}_K$ the simplices of the full subcomplex
correspond to the subset of the form $\mathbb R\mathcal{Z}_{K_{\omega}}\times \mathbb Z_2^{m-|\omega|}$. This is a disjoint
union of $2^{m-|\omega|}$ copies of $\mathbb R\mathcal{Z}_{K_{\omega}}$. 

The following result was communicated to the author by Taras Panov.
\begin{lemma}\label{rlemma}\cite[Exercise 4.2.13]{BP15}, \cite[Proposition 2.2]{PV16}
For any subset $\omega\subset[m]$ and any $a\in\mathbb Z_2^{m-|\omega|}$ there is a retraction 
$\mathbb R\mathcal{Z}_K\to\mathbb R\mathcal{Z}_{K_{\omega}}\times a$.
\end{lemma}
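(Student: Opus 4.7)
The plan is to write down the retraction explicitly. Viewing $\RZ_K\subset[-1,1]^m$ via the polyhedral product description and using the identification of the additive group $\mathbb Z_2=\{0,1\}$ with the multiplicative group $S^0=\{-1,+1\}$ that is built into the definition of $\RZ_K$, I would define, for $\omega\subset[m]$ and $a=(a_i)_{i\notin\omega}\in\{-1,+1\}^{m-|\omega|}$, the map
\[
r_{\omega,a}\colon \RZ_K\to\RZ_K,\qquad r_{\omega,a}(x)_i=\begin{cases} x_i,& i\in\omega,\\ a_i,& i\notin\omega,\end{cases}
\]
and verify that it is a retraction onto $\RZ_{K_{\omega}}\times\{a\}$.

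First I would check well-definedness. Any $x\in\RZ_K$ lies in some $(D^1,S^0)^\sigma$ with $\sigma\in K$. Put $\tau=\sigma\cap\omega$; since $K$ is a simplicial complex and $\tau\subset\sigma$, we have $\tau\in K$, and as $\tau\subset\omega$ we conclude $\tau\in K_\omega$. Now in $r_{\omega,a}(x)$ the coordinates with $i\in\tau$ remain in $D^1$; those with $i\in\omega\setminus\tau$ lie in $S^0$ (because $i\notin\sigma$); and for $i\notin\omega$ the value $a_i$ lies in $S^0$. Hence $r_{\omega,a}(x)\in (D^1,S^0)^\tau\times\{a\}\subset \RZ_{K_\omega}\times\{a\}\subset\RZ_K$.

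Continuity is immediate: $r_{\omega,a}$ is the restriction to $\RZ_K$ of the coordinatewise map $[-1,1]^m\to[-1,1]^m$ that is the identity on the $\omega$-coordinates and the constant map with value $a_i$ on each coordinate $i\notin\omega$. Finally, for $x\in\RZ_{K_\omega}\times\{a\}$ one has $x_i=a_i$ for all $i\notin\omega$ by definition of the second factor, so $r_{\omega,a}(x)=x$; this confirms the retraction property.

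No step presents a genuine obstacle; the only substantive point is the observation that $K_\omega$ contains $\sigma\cap\omega$ for every $\sigma\in K$, which is immediate from the definition of a full subcomplex together with the fact that $K$ is closed under taking subsets. The same formula also shows, as a byproduct, that $r_{\omega,a}$ is equivariant for the coordinatewise action of the subgroup $\Z_2^{|\omega|}\subset\Z_2^m$ supported on $\omega$, which is the form in which the statement is typically used.
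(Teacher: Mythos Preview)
Your proof is correct and follows essentially the same approach as the paper: both define the retraction by the coordinatewise map that keeps the $\omega$-coordinates and replaces the others by $a_i$, then observe that $(D^1,S^0)^\sigma$ is sent into $(D^1,S^0)^{\sigma\cap\omega}\times\{a\}\subset\RZ_{K_\omega}\times\{a\}$. Your version is slightly more explicit in checking the cases $i\in\omega\setminus\tau$ and in noting equivariance, but the argument is the same.
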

\begin{proof}
Indeed, the retraction has the form $(x_1,\dots,x_m)\to (y_1,\dots,y_m)$, where 
$y_i=\begin{cases}a_i,& i\notin \omega;\\x_i,&i\in\omega\end{cases}$. 
This mapping is induced by the projection of the cube $(D^1)^m$ to its face $(D^1)^{|\omega|}\times a$. It maps 
$(D^1,S^0)^{\sigma}=(D^1)^{|\sigma|}\times (S^0)^{m-|\sigma|}\to (D^1)^{|\sigma\cap \omega|}\times a\subset (D^1,S^0)^{\sigma\cap\omega}\subset \mathbb R\mathcal{Z}_{K_\omega}\times a$ and by construction 
is continuous.  Also it is identical on  $\mathbb R\mathcal{Z}_{K_{\omega}}\times a$. 
\end{proof}
\begin{corollary}\label{Zwinc}
For any subset $\omega\subset[m]$ and any $a\in\mathbb Z_2^{m-|\omega|}$ the mapping of fundamental groups 
$\pi_1(\mathbb R\mathcal{Z}_{K_\omega}\times a)\to \pi_1(\mathbb R\mathcal{Z}_K)$ is injective.
\end{corollary}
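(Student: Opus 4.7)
The plan is to deduce the injectivity of the induced map on fundamental groups immediately from Lemma \ref{rlemma} via the standard retraction argument. Concretely, let $i\colon \mathbb R\mathcal{Z}_{K_\omega}\times a\hookrightarrow \mathbb R\mathcal{Z}_K$ denote the inclusion and let $r\colon \mathbb R\mathcal{Z}_K\to\mathbb R\mathcal{Z}_{K_\omega}\times a$ denote the retraction constructed in Lemma \ref{rlemma}. By definition of a retraction, $r\circ i=\mathrm{id}$ on $\mathbb R\mathcal{Z}_{K_\omega}\times a$.

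Choose any basepoint $x_0\in\mathbb R\mathcal{Z}_{K_\omega}\times a$ (and use $x_0$ also as the basepoint in $\mathbb R\mathcal{Z}_K$). Applying the functor $\pi_1(-,x_0)$ to the equality $r\circ i=\mathrm{id}$ yields $r_*\circ i_*=\mathrm{id}$ on $\pi_1(\mathbb R\mathcal{Z}_{K_\omega}\times a, x_0)$. Since the identity is injective, the homomorphism $i_*\colon \pi_1(\mathbb R\mathcal{Z}_{K_\omega}\times a)\to \pi_1(\mathbb R\mathcal{Z}_K)$ must be injective as well. This is the entire argument; no step is truly an obstacle, since Lemma \ref{rlemma} already supplies the retraction explicitly.

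One small caveat worth addressing is the connectedness / basepoint issue: if $\mathbb R\mathcal{Z}_{K_\omega}\times a$ is disconnected, then one should interpret $\pi_1$ componentwise (the fundamental group of each connected component of $\mathbb R\mathcal{Z}_{K_\omega}\times a$ injects into the fundamental group of the connected component of $\mathbb R\mathcal{Z}_K$ containing it). The retraction argument works componentwise in exactly the same way, since a retraction restricts to a retraction on the component of $\mathbb R\mathcal{Z}_K$ containing the chosen basepoint onto the component of $\mathbb R\mathcal{Z}_{K_\omega}\times a$ containing it. Thus the corollary follows immediately from Lemma \ref{rlemma}.
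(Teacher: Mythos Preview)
Your proof is correct and is exactly the argument the paper intends: the corollary is stated immediately after Lemma \ref{rlemma} with no separate proof, so the standard retraction argument $r_*\circ i_*=\mathrm{id}$ implying injectivity of $i_*$ is precisely what is meant. Your remark on basepoints and components is a reasonable clarification but not essential for the paper's applications.
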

Any $k$-belt $\mathcal{B}$ of a simple $3$-polytope $P$ corresponds to the set $\omega(\mathcal{B})=\{i\colon F_i\in \mathcal{B}\}$.
The induced subcomplex $K_{\omega(\mathcal{B})}$ in $K=\partial P^*$ is a simple cycle, which is a boundary of a $k$-gon. 
This $k$-gon may be realized as a piecewise linear subspace $P_k(\mathcal{B})$ 
of $P$ consisting of triangles with vertices the barycentre of 
$P$, the barycentre of a facet of the belt, and the barycentre of its edge of intersection with a successive facet of the belt.

\begin{definition}
For a vector-colouring of a simple $3$-polytope $P$ of rank $r$ and a $k$-belt $\mathcal{B}$ define 
 $\pi_{\mathcal{B}}=\langle \Lambda_j\colon F_j\in\mathcal{B}\rangle$ and $r(\mathcal{B})=\dim \pi_{\mathcal{B}}$.
Then there is an {\it induced vector-colouring} 
$\Lambda_{\mathcal{B}}$ of $P_k(\mathcal{B})$ of rank $r(\mathcal{B})$, $2\leqslant r(\mathcal{B})\leqslant r$. 
\end{definition}

\begin{proposition}\label{Binprop}
Let $M(P,\Lambda)$ be a manifold defined by a vector-colouring $\Lambda$ of rank $r$ of a simple $3$-polytope $P$, 
and $\mathcal{B}$ be a $k$-belt,  $k\geqslant 4$. Then in $M(P,\Lambda)$ the copies of the $k$-gon 
$P_k(\mathcal{B})\subset P$ form a disjoint union of $2^{r-r(\mathcal{B})}$  incompressible $2$-submanifolds
$M_x(\mathcal{B})$, $x\in \mathbb Z_2^r/\pi_\mathcal{B}$, with product neighbourhoods. Each submanifold 
$M_x(\mathcal{B})$ is homeomorphic to $M(P_k(\mathcal{B}),\Lambda_{\mathcal{B}})$, which is either $(T^2)^{\# g}$, $g=1+(k-4)2^{r(\mathcal{B})-3}$, or $(\mathbb RP^2)^{\#l}$, $l=2+(k-4)2^{r(\mathcal{B})-2}$.
\end{proposition}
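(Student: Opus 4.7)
The plan is to first analyze $P_k(\mathcal{B})$ as a subset of $P$ and then track what happens to its copies under the gluing $P\times\mathbb Z_2^r/\sim$. Since $P_k(\mathcal{B})$ is a topological disk whose boundary consists of $k$ segments, each lying in a facet $F_i\in\mathcal B$, two copies $P_k(\mathcal{B})\times a$ and $P_k(\mathcal{B})\times b$ are glued along an edge in $M(P,\Lambda)$ precisely when $a-b=\Lambda_i$ for some $F_i\in\mathcal B$. Chains of such gluings identify copies whose indices differ by an element of $\pi_\mathcal B$. Hence the copies of $P_k(\mathcal B)$ partition into $|\mathbb Z_2^r/\pi_\mathcal B|=2^{r-r(\mathcal B)}$ connected pieces $M_x(\mathcal B)$, and each piece is tautologically homeomorphic to $P_k(\mathcal B)\times\pi_\mathcal B/\!\sim\; =\; M(P_k(\mathcal B),\Lambda_\mathcal B)$ after identifying $\pi_\mathcal B\cong\mathbb Z_2^{r(\mathcal B)}$.

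Next I would identify the topological type of $M(P_k(\mathcal B),\Lambda_\mathcal B)$. It is a closed surface assembled from $2^{r(\mathcal B)}$ copies of the $k$-gon, so its face numbers are $f_2=2^{r(\mathcal B)}$, $f_1=k\cdot 2^{r(\mathcal B)-1}$, $f_0=k\cdot 2^{r(\mathcal B)-2}$, giving $\chi=2^{r(\mathcal B)-2}(4-k)$. In the orientable case $\chi=2-2g$ yields $g=1+(k-4)2^{r(\mathcal B)-3}$, and in the non-orientable case $\chi=2-l$ yields $l=2+(k-4)2^{r(\mathcal B)-2}$. Product neighbourhoods are immediate from the local picture: in the interior of $P_k(\mathcal B)$ the polytope $P$ is a manifold-with-boundary of the form disk$\times[0,\varepsilon)$ locally, and at each boundary edge lying in $F_i$ the two copies of $P$ meeting along $F_i$ fit together to give a neighbourhood homeomorphic to $\mathbb R^2\times(-\varepsilon,\varepsilon)$ about the corresponding edge of $P_k(\mathcal B)$.

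The main obstacle is incompressibility, and I plan to address it via the commutative covering diagram
\[
\begin{array}{ccc}
\tilde N_0 & \hookrightarrow & \mathbb R\mathcal Z_P\\
\downarrow & & \downarrow\\
M_x(\mathcal B) & \hookrightarrow & M(P,\Lambda),
\end{array}
\]
where $\tilde N_0=\mathbb R\mathcal Z_{K_{\omega(\mathcal B)}}\times\{a\}$ is a connected component of the preimage of $M_x(\mathcal B)$ under $\mathbb R\mathcal Z_P\to M(P,\Lambda)$. By the observation about gluings, the deck group of the left covering is the stabilizer $H(\Lambda)\cap\langle e_i\colon i\in\omega(\mathcal B)\rangle=H(\Lambda_\mathcal B)$, which embeds into the deck group $H(\Lambda)$ of the right covering. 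The two rows of induced homotopy short exact sequences
\[
1\to\pi_1(\tilde N_0)\to\pi_1(M_x(\mathcal B))\to H(\Lambda_\mathcal B)\to 1,\qquad
1\to\pi_1(\mathbb R\mathcal Z_P)\to\pi_1(M(P,\Lambda))\to H(\Lambda)\to 1
\]
fit in a map of extensions. The left vertical arrow is injective by Corollary \ref{Zwinc} applied to the full subcomplex $K_{\omega(\mathcal B)}\subset K_P$ (using $K_{\omega(\mathcal B)}\cong\partial P_k^{*}$, so $\mathbb R\mathcal Z_{K_{\omega(\mathcal B)}}\cong\mathbb R\mathcal Z_{P_k}$), and the right vertical arrow is injective as noted. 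The five lemma then forces the middle vertical arrow to be injective, which gives the incompressibility of $M_x(\mathcal B)$ (no component is a sphere or disk, since $\chi\leqslant 0$ when $k\geqslant 4$, and the case $k=4$ with $r(\mathcal B)=2$ still produces a torus or Klein bottle, never $S^2$ or $D^2$).
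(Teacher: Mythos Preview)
Your argument is correct and rests on the same commutative covering square as the paper, but you finish the incompressibility step differently. The paper observes that $\mathrm{Ker}\,j_*\cap\mathrm{Im}\,(q_1)_*=\{1\}$, deduces an injection $\mathrm{Ker}\,j_*\hookrightarrow H(\Lambda_{\mathcal B})$, and then kills $\mathrm{Ker}\,j_*$ by invoking that $M_x(\mathcal B)$ is a $K(\pi,1)$ (since $\chi\leqslant 0$) and hence its fundamental group is torsion-free. You instead organize the same data as a map of short exact sequences and apply the four-lemma diagram chase (injectivity of the two outer vertical maps forces injectivity of the middle one); this is more elementary, as it avoids the appeal to asphericity and torsion-freeness. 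Your Euler-characteristic computation is also more direct than the paper's, which routes through the covering $\mathbb R\mathcal Z_{P_k(\mathcal B)}\to M_x(\mathcal B)$ and the known genus of $\mathbb R\mathcal Z_{P_k}$. One terminological nit: what you use is the four lemma (or a two-line chase), not the five lemma proper; and you should note explicitly that the right square of your map of extensions commutes because the $H(\Lambda_{\mathcal B})$-action on $\tilde N_0$ is the restriction of the $H(\Lambda)$-action on $\mathbb R\mathcal Z_P$.
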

\begin{remark}
For small covers another approach to the incompressibility of these submanifolds based on an explicit representation of the fundamental groups can be found in \cite{W19,WY17,LW20}.
\end{remark}
\begin{proof}[Proof of Proposition \ref{Binprop}]

Under the homeomorphism $\RZ_{K_P}\simeq\mathbb R\mathcal{Z}_P$ given in the proof of Proposition \ref{RZKP} 
the subspace $\mathbb{R}\mathcal{Z}_{K_{\omega(\mathcal{B})}}\times a\subset \RZ_{K_P}$ is mapped to the $2$-submanifold
$\mathbb{R}\mathcal{Z}_{P_k(\mathcal{B})}(a)$ in $\mathbb{R}\mathcal{Z}_P$ with a product neighbourhood. This submanifold
is homeomorphic to $\mathbb{R}\mathcal{Z}_{P_k(\mathcal{B})}$ and is glued 
of parts of polytopes $P\times b$ with $b$
lying in the coset in $\mathbb Z_2^m/\mathbb Z_2^{\mathcal{B}}$ corresponding to 
$a\in\mathbb Z_2^{m-|\omega(\mathcal{B})|}$, where 
$\mathbb Z_2^{\mathcal{B}}=\langle e_i\colon F_i\in\mathcal{B}\rangle\subset \mathbb Z_2^m$.

In $M(P,\Lambda)$ the copies of $P_k(\mathcal{B})$ are glued in a disjoint union of $2$-submanifolds 
$M_x(\mathcal{B})$ homeomorphic to 
$M(P_k(\mathcal{B}),\Lambda_{\mathcal{B}})$. By construction each $2$-submanifold $M_x(\mathcal{B})$ has a product neighbourhood and is glued of $k$-gons $P_k(\mathcal{B})\times b$, where $b$ belong to the coset 
$x+\pi_{\mathcal{B}}$ in $\mathbb Z_2^r/\pi_{\mathcal{B}}$.  
In particular, there are $2^{r-r(\mathcal{B})}$ such submanifolds. The submanifold $M_x(\mathcal{B})$ is 
an orbit space of a free action of the subgroup 
$$
H(\mathcal{B})=H(\Lambda)\cap \mathbb Z_2^{\mathcal{B}}={\rm Ker}\,\left[\Lambda_{\mathcal{B}}\colon \mathbb Z_2^{\mathcal{B}}\to\mathbb Z_2^m\right]
$$ 
on $\mathbb{R}\mathcal{Z}_{P_k(\mathcal{B})}(a)\subset\mathbb{R}\mathcal{Z}_P$ for any 
$a\in \widehat{\Lambda}^{-1}(x+\pi_{\mathcal{B}})$, where $\Lambda\colon\mathbb Z_2^m\to\mathbb Z_2^r$, 
and $\widehat{\Lambda}\colon \mathbb Z_2^m/\mathbb Z_2^{\mathcal{B}}\to \mathbb Z_2^r/\pi_{\mathcal{B}}$. 
In particular, there is a covering $\mathbb{R}\mathcal{Z}_{P_k(\mathcal{B})}(a)\to M_x(\mathcal{B})$ with the number of sheets 
$|H(\mathcal{B})|=2^{k-r(\mathcal{B})}$. 
 
According to Example \ref{ZPkex} each manifold $\mathbb R\mathcal{Z}_{P_k(\mathcal{B})}(a)$ is homeomorphic to 
a sphere with $g$ handles, $g=(k-4)2^{k-3}+1$. 

For an orientable manifold $M(P,\Lambda)$ each  submanifold 
$M_x(\mathcal{B})$ is also orientable, since it has a product neighbourhood. Thus, it is also a sphere with handles. Let $g'$ be its genus. From the covering  $\mathbb{R}\mathcal{Z}_{P_k(\mathcal{B})}(a)\to M_x(\mathcal{B})$ we see that $\chi(\mathbb R\mathcal{Z}_{P_k(\mathcal{B})})=|H(\mathcal{B})|\chi(M_x(\mathcal{B}))$. Therefore,  $2-2g=|H(\mathcal{B})|(2-2g')$, and 
$$
g'=1+\frac{g-1}{|H(\mathcal{B})|}=1+(k-4)\frac{2^{k-3}}{2^{k-r(\mathcal{B})}}=1+(k-4)2^{r(\mathcal{B})-3}.
$$

If $M(P,\Lambda)$ is non-orientable, then $M_x(\mathcal{B})$ may be either orientable or non-orientable. In the latter case 
it has the form 
$(\mathbb RP^2)^{\#l}$, where $2-2g=|H(\mathcal{B})|(2-l)$, and 
$$
l=2+2\frac{g-1}{|H(\mathcal{B})|}=2+2(k-4)\frac{2^{k-3}}{2^{k-r(\mathcal{B})}}=2+(k-4)2^{r(\mathcal{B})-2}.
$$

Now let us prove the incompressibility of the submanifolds $M_x(\mathcal{B})$.
Corollary \ref{Zwinc} implies that each submanifold 
$\mathbb{R}\mathcal{Z}_{P_k(\mathcal{B})}(a)\subset \mathbb R\mathcal{Z}_P$ is incompressible.

If $\Lambda$ has rank $m$, then $M_x(\mathcal{B})=\mathbb{R}\mathcal{Z}_{P_k(\mathcal{B})}(x)$.
Otherwise, consider the commutative diagrams of mappings and induced homomorphisms of groups:
$$
\xymatrix{
\mathbb R\mathcal{Z}_{P_k(\mathcal{B})}(a)\ar[r]^i\ar[d]^{q_1}&\mathbb{R}\mathcal{Z}_P\ar[d]^{q_2}\\
M_x(\mathcal{B})\ar[r]^j&M(P,\Lambda)
}\quad
\xymatrix{
\pi_1(\mathbb R\mathcal{Z}_{P_k(\mathcal{B})}(a))\ar[r]^{i_*}\ar[d]^{(q_1)_*}&\pi_1(\mathbb{R}\mathcal{Z}_P)\ar[d]^{(q_2)_*}\\
\pi_1(M_x(\mathcal{B}))\ar[r]^{j_*}&\pi_1(M(P,\Lambda))
}
$$

It is a classical fact that the projection $p\colon X\to X/G$ 
for a free action of a finite group $G$ on a Hausdorff topological space 
$X$ is a normal covering, $p_*\colon\pi_1(X)\to\pi_1(X/G)$ 
is injective, and $G\simeq \pi_1(X/G)/p_*\pi_1(X)$ (see \cite[Proposition 1.40]{H02}).
This concerns mappings $q_1$ and $q_2$.  In particular, $(q_1)_*$ and $(q_2)_*$ are injective, 
their images are normal subgroups, and 
$$
\pi_1(M_x(\mathcal{B}))/{\rm Im}\,(q_1)_*\simeq H(\mathcal{B})\simeq \mathbb Z_2^{k-r(\mathcal{B})},\quad \pi_1(M(P,\Lambda))/ {\rm Im}\,(q_2)_*\simeq H(\Lambda)\simeq \mathbb Z_2^{m-r}.
$$
Also $i_*$ in injective by Corollary \ref{Zwinc}.  
Since $j_*(q_1)_*$ is injective, ${\rm Ker}\,j_*\cap {\rm Im}\,(q_1)_*=\{1\}$. Then there is an injection
${\rm Ker}\,j_*\to \pi_1(M_x(\mathcal{B}))/{\rm Im}\,(q_1)_*\simeq H(\mathcal{B})$. If $k\geqslant 4$, then 
$\mathbb R\mathcal{Z}_{P_k(\mathcal{B})}(a)$ is a sphere with at least one handle. In particular, $\chi(\mathbb R\mathcal{Z}_{P_k(\mathcal{B})}(a))\leqslant 0$.
Since $q_1$ is a covering,  $\chi(M_x(\mathcal{B}))\leqslant 0$. It is a classical fact that a $2$-dimensional  compact
closed manifold $X$ with 
$\chi(X)\leqslant 0$ is a $K(\pi,1)$-space, that is $\pi_i(X)=0$ for $i\geqslant 2$. Also it is known that any element
in $\pi_1(K(\pi,1))$, where $K(\pi,1)$ is a finite cell complex, has an infinite order \cite[Proposition 2.45]{H02}. Since 
${\rm Ker}\,j_*$ consists of elements of finite order, it should be equal to $\{1\}$. This finishes the proof of Proposition \ref{Binprop}.
\end{proof}

\begin{example}\label{B4ex}
Consider a $4$-belt $\mathcal{B}=(F_i,F_j,F_k,F_l)$ of a simple $3$-polytope $P$. 
For the quadrangle $P_4(\mathcal{B})\simeq I\times I$
the real moment-angle manifold $\mathbb{R}\mathcal{Z}_{P_4}$ is a torus $T^2$ glued from $16$ copies of $P_4$. 
In $\mathbb{R}\mathcal{Z}_P$ it corresponds to $2^{m-4}$ disjoint incompressible tori. 

For  $M(P,\Lambda)$ defined by a vector-colouring of rank $r$ there are the following possibilities:
\begin{enumerate}
\item ${\rm rk}\,\Lambda_{\mathcal{B}}=4$. Then $M_x(\mathcal{B})\simeq \mathbb{R}\mathcal{Z}_{P_4}\simeq T^2$. 
\item ${\rm rk}\,\Lambda_{\mathcal{B}}=3$. Then either any three vectors form a basis or three subsequent vectors span a 
$2$-dimensional subspace, and the forth does dot lie in this subspace. 
In the second possibility either two opposite vectors coincide, or not.
Thus, up to a symmetry of the quadrangle we can assume that $\Lambda_i$, $\Lambda_j$ and $\Lambda_k$
is a basis, and $\Lambda_l\in \{\Lambda_i+\Lambda_j+\Lambda_k,\Lambda_j,\Lambda_j+\Lambda_k\}$.
The first two cases give $M_x(\mathcal{B})=T^2$, and the last case gives $M_x(\mathcal{B})=K^2$.
\item ${\rm rk}\,\Lambda_{\mathcal{B}}=2$. Then $\Lambda_i$ and $\Lambda_j$ form a basis, and  
either $\{\Lambda_k,\Lambda_l\}=\{\Lambda_i,\Lambda_j\}$, or $\Lambda_i+\Lambda_j\in\{\Lambda_k,\Lambda_l\}$. 
In the first case $\Lambda_k=\Lambda_i$, $\Lambda_l=\Lambda_j$, and $M_x(\mathcal{B})=T^2$, and in the second case 
up to a symmetry of the quadrangle we can assume that $\Lambda_k=\Lambda_i$, and $\Lambda_l=\Lambda_i+\Lambda_j$.    
Then $M_x(\mathcal{B})=K^2$.  
\end{enumerate}
 
\end{example}

We also need to consider $2$-submanifolds in $M(P,\Lambda)$ corresponding to facets of a simple $3$-polytope $P$
surrounded by belts.

\begin{definition}
Let $F_i$ be a $k$-gonal facet of a simple $3$-polytope $P$ surrounded by a  $k$-belt $\mathcal{B}$. Denote $\pi_{F_i}=\langle \Lambda_i\rangle+\pi_{\mathcal{B}}$, and $r(F_i)=\dim \pi_{F_i}$.
\end{definition}

\begin{proposition}\label{Finprop}
Let $M(P,\Lambda)$ be a manifold defined by a vector-colouring $\Lambda$ of rank $r$ of a simple $3$-polytope $P$, 
and $F_i$ be a facet surrounded by a $k$-belt $\mathcal{B}$ with $k\geqslant 4$. 
Then in $M(P,\Lambda)$ the copies of the $k$-gon $F_i$ form a disjoint union of $2^{r-r(F_i)}$ incompressible $2$-submanifolds
$M_x(F_i)$, $x\in \mathbb Z_2^r/\pi_{F_i}$. If $\Lambda_i\notin \pi_{\mathcal{B}}$, then
each manifold has a product neighbourhood with two boundary components $M_x(\mathcal{B})$ and $M_{x+\Lambda_i}(\mathcal{B})$ isotopic to $M_x(F_i)$. If $\Lambda_i\in \pi_{\mathcal{B}}$, then each manifold has a nontrivial 
tubular neighbourhood with the boundary $M_x(\mathcal{B})$. This neighbourhood is homeomorphic to a mapping
cylinder of a quotient map of a free action of an involution on $M_x(\mathcal{B})$ corresponding to the vector $\Lambda_i$.
\end{proposition}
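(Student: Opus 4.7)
The strategy is to mirror the proof of Proposition \ref{Binprop}: identify the components of the facet preimage in $M(P,\Lambda)$, describe the tubular neighbourhood in terms of belt submanifolds, and reduce incompressibility to the belt case. For the count, a point $(p,a)$ with $p$ in the relative interior of $F_i$ is identified only with $(p,a+\Lambda_i)$; along an edge $F_i\cap F_{j_\ell}$ with $F_{j_\ell}\in\mathcal{B}$ one additionally identifies by $\Lambda_{j_\ell}$; at a vertex, by two such belt vectors. Hence the orbit of a single copy $F_i\times\{a\}$ under all these identifications sweeps out a connected surface parametrised by the coset $a+\pi_{F_i}\in\mathbb{Z}_2^r/\pi_{F_i}$, giving exactly $2^{r-r(F_i)}$ components $M_x(F_i)$, each homeomorphic to the manifold associated to the $k$-gon $F_i$ equipped with the induced vector-colouring by the $\Lambda_{j_\ell}$'s.

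Next I would describe the tubular neighbourhood. Fix a small inward collar $F_i\times[0,\epsilon]$ of $F_i$ in $P$; the polytope copies $P\times\{a\}$ and $P\times\{a+\Lambda_i\}$ are glued along $F_i$, producing a two-sided collar homeomorphic to $F_i\times[-\epsilon,\epsilon]$ in $M(P,\Lambda)$. Its two boundary disks $F_i\times\{\pm\epsilon\}$ live in $P\times\{a\}$ and $P\times\{a+\Lambda_i\}$ respectively and, after the belt identifications, extend to belt submanifolds in the cosets $a+\pi_\mathcal{B}$ and $a+\Lambda_i+\pi_\mathcal{B}$. If $\Lambda_i\notin\pi_\mathcal{B}$ these cosets differ, the $I$-bundle over $M_x(F_i)$ is trivial, and the neighbourhood is a genuine product with boundary $M_x(\mathcal{B})\sqcup M_{x+\Lambda_i}(\mathcal{B})$, all three surfaces being mutually isotopic. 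If $\Lambda_i\in\pi_\mathcal{B}$, both boundary disks lie in the same belt coset, so the $I$-bundle is twisted. The involution $t\mapsto t+\Lambda_i$ preserves $a+\pi_\mathcal{B}$ and acts on $M_x(\mathcal{B})$; it is free because any lift of $\Lambda_i$ to $\mathbb{Z}_2^\mathcal{B}$ maps under $\Lambda$ to $\Lambda_i\neq 0$ and so does not lie in $H(\Lambda)\cap\mathbb{Z}_2^\mathcal{B}$, while $H(\Lambda)$ itself acts freely on $\mathbb R\mathcal{Z}_P$. The quotient is $M_x(F_i)$, and the tube is the mapping cylinder of the resulting double cover $M_x(\mathcal{B})\to M_x(F_i)$.

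Finally, I would prove incompressibility. In the product case, $M_x(F_i)$ is isotopic inside $M(P,\Lambda)$ to the incompressible belt submanifold $M_x(\mathcal{B})$, so the conclusion is immediate from Proposition \ref{Binprop}. The hard part is the twisted case. Suppose $\alpha\in\pi_1(M_x(F_i))$ maps trivially to $\pi_1(M(P,\Lambda))$. Since $M_x(\mathcal{B})\to M_x(F_i)$ is a double cover, $\alpha^2$ lies in the image of $\pi_1(M_x(\mathcal{B}))$. The composition $\pi_1(M_x(\mathcal{B}))\to\pi_1(M_x(F_i))\to\pi_1(M(P,\Lambda))$ is injective by Proposition \ref{Binprop}, so $\alpha^2=1$ in $\pi_1(M_x(\mathcal{B}))$, and therefore in $\pi_1(M_x(F_i))$. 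Since $k\geqslant 4$, the genus and cross-cap formulas of Proposition \ref{Binprop} give $\chi(M_x(\mathcal{B}))\leqslant 0$, whence $\chi(M_x(F_i))=\chi(M_x(\mathcal{B}))/2\leqslant 0$; in particular $M_x(F_i)$ is neither $S^2$ nor $\mathbb RP^2$, so its fundamental group is torsion-free and $\alpha=1$, as required.
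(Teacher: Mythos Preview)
Your proof follows the paper's approach closely: count components via cosets of $\pi_{F_i}$, describe the tubular neighbourhood via the collar between $F_i$ and $P_k(\mathcal{B})$, split into the product and twisted cases, and in the twisted case deduce incompressibility from that of $M_x(\mathcal{B})$ using the index-$2$ covering together with torsion-freeness of surface groups with nonpositive Euler characteristic. Your element-wise argument ($\alpha^2\in\mathrm{Im}\,c_*$, hence $\alpha^2=1$, hence $\alpha=1$) is just a rephrasing of the paper's observation that $\mathrm{Ker}\,j_*$ injects into $\pi_1(M_x(F_i))/\mathrm{Im}\,c_*\simeq\mathbb{Z}_2$.

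There is one genuine gap: your justification that the $\Lambda_i$-involution on $M_x(\mathcal{B})$ is free does not work. Saying that a lift of $\Lambda_i$ to $\mathbb{Z}_2^{\mathcal{B}}$ lies outside $H(\Lambda)\cap\mathbb{Z}_2^{\mathcal{B}}$ only tells you the involution is nontrivial, and the freeness of $H(\Lambda)$ on $\mathbb{R}\mathcal{Z}_P$ is irrelevant here, since the lifted element you would need to act freely on $\mathbb{R}\mathcal{Z}_{P_k(\mathcal{B})}$ is not in $H(\Lambda)$. A fixed point of the involution would be a point $(p,b)$ with $\Lambda_i\in\langle\Lambda_j:p\in F_j,\,F_j\in\mathcal{B}\rangle$; at a vertex of $P_k(\mathcal{B})$ this span is $\langle\Lambda_p,\Lambda_q\rangle$ for the two adjacent belt facets, and $F_i\cap F_p\cap F_q$ is a vertex of $P$, so condition~$(*)$ gives $\Lambda_i\notin\langle\Lambda_p,\Lambda_q\rangle$. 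That is the argument the paper uses, and it is what you should substitute.
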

\begin{remark}
In the case of small covers over $3$-polytope the incompressibility of facet submanifolds follows from \cite[Theorem 3.3]{WY17}. 
Another approach to this result is represented in \cite{DO01,DJS98,D08} and is based on the fact that $M_x(F_i)$
is a totally geodesic hypersurface in $M(P,\Lambda)$ with the structure of a cubical complex, which is nonpositively curved in the sense of Alexandrov and Gromov \cite{G87}. 
\end{remark}
\begin{proof}[Proof of Proposition \ref{Finprop}]

In $M(P,\Lambda)$ the copies of $F_i$ are glued in a disjoint union of 
$2^{r-r(F_i)}$ manifolds $M_x(F)$. Each manifold is glued of copies of $F_i$ corresponding to polytopes
$P\times b$ with $b$ lying in the coset $x+\pi_{F_i}\in \mathbb Z_2^r/\pi_{F_i}$. There is a piecewise-linear
homeomorphism between the part of the polytope $P$ between $P_k(\mathcal{B})$ and $F_i$ and $F_i\times I$. This 
homeomorphism gives an isotopy between  $P_k(\mathcal{B})$ and $F_i$. 
 
There are two possibilities:
either $\Lambda_i\not\in \pi_{\mathcal{B}}$, or $\Lambda_i\in \pi_{\mathcal{B}}$. In the first case 
$M_x(F_i)$ has a product tubular neighbourhood with the boundary consisting of
two manifolds $M_x(\mathcal{B})$ and $M_{x+\Lambda_i}(\mathcal{B})$. In particular, all the three manifolds
are isotopic and homeomorphic to $M(F_i,\Lambda_{\mathcal{B}})$, and $M_x(F_i)$ is incompressible
by Proposition \ref{Binprop}.

In the second case $M_x(F_i)$ 
has a nontrivial tubular neighbourhood with the boundary $M_x(\mathcal{B})$. $M_x(F_i)$ is homeomorphic
to a factor space of $M(F_i,\Lambda_{\mathcal{B}})$ by the action of the involution given by 
$\Lambda_i\in\pi_{\mathcal{B}}\simeq \mathbb Z_2^{r(\mathcal{B})}$.
The action if free, since $\Lambda_i\notin \langle\Lambda_p,\Lambda_q\rangle$ for any vertex $F_i\cap F_p\cap F_q$ of $F_i$.
Then $M_x(F_i)$ is homeomorphic to the manifold $M(F_i,\Lambda_{F_i})$, where $\Lambda_{F_i}$ is a vector-colouring 
given by the composition $\{F_j\colon F_j\in\mathcal{B}\}\to\pi_{\mathcal{B}}\to \pi_{\mathcal{B}}/\langle\Lambda_i\rangle$. 
The structure of $I$-bundle of the tubular neighbourhood of $M_x(F_i)$ gives a homeomorphism 
of this neighbourhood and the mapping cylinder of the quotient mapping  
$M(F_i,\Lambda_{\mathcal{B}})\to M(F_i,\Lambda_{F_i})$ described above.
It corresponds to the mapping from the boundary 
$M_x(\mathcal{B})$ to the zero section $M_x(F_i)$ of the tubular neighbourhood. 
This mapping is homotopic to the identical mapping 
and is a $2$-sheeted covering.

Thus, the inclusion 
$i\colon M_x(\mathcal{B})\to M(P,\Lambda)$ up to a homotopy can be decomposed as a composition 
of the $2$-sheeted covering $c\colon M_x(\mathcal{B})\to M_x(F_i)$ and the inclusion $j\colon M_x(F_i)\to M(P,\Lambda)$.
We have the composition of the homomorphisms 
$$
\pi_1(M_x(\mathcal{B}))\xrightarrow{c_*} \pi_1(M_x(F_i))\xrightarrow{j_*} \pi_1(M(P,\Lambda)).
$$
The first mapping is injective due to the covering space property, and the composition is injective by Proposition \ref{Binprop}.
Also $\pi_1(M_x(F_i))/{\rm Im}\,c_*\simeq \mathbb Z_2$.
Thus, as in the case of belts, we obtain that ${\rm Ker}\, j_*\cap {\rm Im}\,c_*=\{1\}$. In particular, there is an
embedding ${\rm Ker}\, j_*\to \pi_1(M_x(F_i))/{\rm Im}\,c_*\simeq \mathbb Z_2$. As discussed in the proof of 
Proposition \ref{Binprop}, this implies that ${\rm Ker}\, j_*$ is trivial, if $\chi(M_x(F_i))\leqslant 0$. The last condition
is satisfied, since $M_x(F_i)$ is covered by $M_x(\mathcal{B})$, which has a nonpositive Euler characteristic for
$k\geqslant 4$. 
\end{proof}

\begin{example}\label{F4ex}
If $F_i$ is a quadrangle, then taking into account Example \ref{B4ex} we have the following possibilities: 
\begin{enumerate}
\item $\Lambda_i\not\in \pi_{\mathcal{B}}$. Then $M_x(F_i)\simeq M_x(\mathcal{B})\simeq M_{x+\Lambda_i}(F_i)$ is either 
$T^2$, or $K_2$.  If $M(P,\Lambda)$ is orientable, then only the case $T^2$ is possible.
\item $\Lambda_i\not\in \pi_{\mathcal{B}}$. Then $M_x(F_i)$ has a tubular neighbourhood with the boundary $M_x(\mathcal{B})$,
and there is a $2$-sheeted covering $M_x(\mathcal{B})\to M_x(F_i)$. There are the following possibilities:
$(M_x(\mathcal{B}), M_x(F_i))\in \{(T^2,T^2), (T^2,K^2), (K^2,K^2)\}$. If $M(P,\Lambda)$ is orientable, then $M_x(\mathcal{B})\simeq T^2$, since it has a product neighbourhood. The vector $\Lambda_i$
is a linear combination of an odd number of vectors from the base of the set $\{\Lambda_j\colon F_j\in \mathcal{B}\}$.
Therefore, the corresponding involution on $M_x(\mathcal{B})$ changes the orientation, and $M_x(F_i)\simeq K^2$.
\end{enumerate}
\end{example}

\subsubsection{The $JSJ$-decomposition and an explicit  geometrization.}\label{JSJlast}
To pass to the $JSJ$-decomposition of the orientable manifold $M(P,\Lambda)$ we first 
realize geometrically the combinatorial decomposition of item (1) of Theorem \ref{M-th} using Corollary \ref{4bplane}. 
Each planar section $\widehat{P}_4(\mathcal{B})$ of $P$ corresponding to a belt $\mathcal{B}$ 
is isotopic to the piecewise linear polygon $P_4(\mathcal{B})$. Hence, the corresponding submanifolds  
$\widehat{M}_x(\mathcal{B})$ and $M_x(\mathcal{B})$ are isotopic in $M(P,\Lambda)$. 
Thus, in $M(P,\Lambda)$ we obtain a disjoint family of incompressible tori $\widehat{M}_x(\mathcal{B})$
corresponding to $4$-belts $\mathcal{B}$ from the nested family, incompressible tori $M_x(F_i)$ corresponding to 
"free" quadrangles $F_i$ of almost Pogorelov polytopes with $r(F_i)>r(\mathcal{B}(F_i))$, and incompressible tori 
$\widehat{M}_x(\mathcal{B})$, which are boundaries of the tubular neighbourhoods of the incompressible submanifolds $M_x(F_i)\simeq K^2$ corresponding to "free" quadrangles $F_i$ of almost Pogorelov polytopes with $r(F_i)=r(\mathcal{B}(F_i))$. For a $4$-belt around a quadrangle the section  $\widehat{P}_4(\mathcal{B})$ may be easily chosen as a slight shift of the facet. 

Now we delete from the flag polytope $P\ne I^3$ all the sections $\widehat{P}_4(\mathcal{B})$ corresponding to the belts
from the decomposition (1) and delete all the "free quadrangles" of almost Pogorelov polytopes from the decomposition.
Then each part $Q_i$ corresponding to an almost Pogorelov polytope $R_i$ without adjacent quadrangles is 
homeomorphic to this polytope with all the quadrangles deleted
and to a right-angled polytope of finite volume in $\mathbb H^3$ with ideal points deleted. The realization of a right-angled
polytope of finite volume in $\mathbb H^3$ is unique up to isometries of $\mathbb H^3$. Each part $Q_j$ 
corresponding to a  
$k$-prism $R_j$, $k\geqslant 5$, is homeomorphic to this prism with some disjoint set of quadrangles deleted, and also to
a direct product of a polygon with some set of vertices deleted and the segment. The polygon may be realized as a right-angled
polygon with deleted points lying on the absolute. The realization is not unique. In the orientable manifold $M(P,\Lambda)$ 
this partition of the polytope corresponds to a partition of the manifold into pieces defined by the deletion of the incompressible tori
corresponding to the belts from (1) and incompressible tori and Klein bottles corresponding to "free quadrangles" of almost Pogorelov polytopes. Each piece is glued from right-angled polytopes of finite volume in $\mathbb H^2\times \mathbb R$ and $\mathbb H^3$. 



The vector-colouring $\Lambda$ of rank $r$ induces a vector-colouring $\Lambda_{Q_i}$ of each
piece $Q_i$, which we consider as a right-angled polytope of finite volume in $\mathbb H^3$ or $\mathbb H^2\times \mathbb R$.  

\begin{definition}
Denote by $\pi_{Q_i}\subset \mathbb Z_2^r$ a linear span of vectors $\Lambda_j$ corresponding to facets of $Q_i$, and $r_i=\dim \pi_{Q_i}$.
\end{definition}

In the orientable manifold $M(P, \Lambda)$ each peace $M_x(Q_i)$ corresponding to $Q_i$ is homeomorphic to 
$N(Q_i,\Lambda_{Q_i})$ and is glued of the polytopes $Q_i\times b$ with $b$ lying in 
the coset $x+\pi_{Q_i}\in\mathbb Z_2^r/\pi_{Q_i}$. There are $2^{r-r_i}$ such pieces. 
The manifold $M_x(Q_i)$ contains the torus $\widehat{M}_y(\mathcal{B})$
from the above collection in its boundary 
if and only if $\widehat{P}_4(\mathcal{B})$ lies in the boundary of $Q_i$, and $y+\pi_{\mathcal{B}}\subset x+\pi_{Q_i}$.  
It contains the torus or the Klein bottle $M_y(F_j)$ from the above collection in its closure in 
$M(P,\Lambda)$ if and only 
$F_j$ is a free quadrangle of the almost Pogorelov polytope $R_i$, and $y+\pi_{\mathcal{B}}\subset x+\pi_{Q_i}$.
This gives an explicit geometrization of the pieces of the manifold $M(P,\Lambda)$.

Now our goal is to prove that this is actually the minimal geometric decomposition from Theorem \ref{GDT}. For this 
we will use Proposition \ref{GDP}. First using Proposition \ref{JSJcrit} we will prove that the collection 
of incompressible tori from Theorem \ref{M-th} gives the $JSJ$-decomposition, and then we will prove that $M(P,\Lambda)$ is not a $Sol$-manifold.

\begin{proposition}\label{MPprop}
For an orientable manifold $M(P,\Lambda)$ over a flag polytope $P$ different from $k$-prisms, 
the closure of each piece $M_x(Q_i)$
corresponding to a $k$-prism, $k\geqslant 5$, 
is a Seifert fibered manifold with toric boundary, and it is not homeomorphic to 
$D^2\times S^1$, $T^2\times I$ and $K^2\widetilde{\times} I$.
Moreover, for any two pieces $M_x(Q_i)$ and $M_y(Q_j)$ with a common boundary torus $T$ we have 
$f(T,\overline{M_x(Q_i)})\ne f(T,\overline{M_y(Q_j)})$.
\end{proposition}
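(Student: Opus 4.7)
The plan is to exhibit each closed piece $\overline{M_x(Q_i)}$ as a compact Seifert fibered manifold modelled on $\mathbb H^2\times\mathbb R$, and then to read off the regular fiber class from the $\mathbb R$-factor of the prism. First, by Theorem \ref{M-th}(1) and the $\mathbb H^2\times\mathbb R$-bullet preceding it, the part $Q_i$ is homeomorphic to a product $P_k'\times I$, where $P_k'$ is a right-angled $k$-gon with $k\geqslant 5$ proper edges and possibly some ideal vertices corresponding to the deleted quadrangles; this realises $Q_i$ as a right-angled polytope of finite volume in $\mathbb H^2\times\mathbb R$. Applying Construction \ref{MVconstr} to this realisation and to the induced vector-colouring $\Lambda_{Q_i}$ identifies the piece with $N(Q_i,\Lambda_{Q_i})$ and endows it with an $\mathbb H^2\times\mathbb R$-structure of finite volume, so the closure $\overline{M_x(Q_i)}$ is a compact $3$-manifold with toroidal boundary (the tori being the belt tori and free-quadrangle tori described in Examples \ref{B4ex} and \ref{F4ex}).

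Next I will produce the Seifert fibration and exclude the three exceptional models. The $\mathbb R$-foliation of $\mathbb H^2\times\mathbb R$ by vertical lines $\{x\}\times\mathbb R$ is preserved by $G(Q_i)$: reflections in the two bases $P_k'\times\{0\},P_k'\times\{1\}$ stabilise each leaf, while reflections in the side facets permute leaves. It therefore descends to a foliation of $N(Q_i,\Lambda_{Q_i})$; the two base-reflections generate an infinite dihedral subgroup of $G(Q_i)$ whose translation element has some power in $\mathrm{Ker}\,\varphi_{\Lambda_{Q_i}}$, so every descended leaf is compact and we get a Seifert fibration of $\overline{M_x(Q_i)}$ by circles over a $2$-orbifold which is the appropriate quotient of $P_k'$. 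Since $k\geqslant 5$, the right-angled hyperbolic $k$-gon $P_k'$ has strictly positive area, so the base orbifold has Euler characteristic $<0$; in particular $\overline{M_x(Q_i)}$ genuinely carries the $\mathbb H^2\times\mathbb R$ geometry. The three exceptional pieces $D^2\times S^1$, $T^2\times I$, $K^2\widetilde{\times}I$ have base orbifolds of non-negative Euler characteristic and are modelled on $S^3$, $\mathbb R^3$, $\mathbb R^3$ respectively, hence they are excluded.

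Finally I will compare regular fibers across a shared torus. Let $T$ be a boundary torus common to $\overline{M_x(Q_i)}$ and $\overline{M_{x'}(Q_j)}$; then $T$ sits above a quadrangle $\widehat P_4(\mathcal B)$ of a canonical $4$-belt $\mathcal B$ separating two adjacent prisms $R_i$ and $R_j$ from the decomposition of Theorem \ref{M-th}(1). By the twisted condition, the two edges of this quadrangle belonging to bases of $R_i$ are precisely the two edges belonging to side facets of $R_j$, and vice versa. The regular fiber of $\overline{M_x(Q_i)}$ is a lift of the $\mathbb R_i$-direction of $R_i$, so on the quadrangle it is represented by a simple curve transverse to the two base-edges of $R_i$; likewise for $\overline{M_{x'}(Q_j)}$ with $R_j$. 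Under the twisted identification one such transverse curve is sent onto a loop parallel to the other, so the two primitive classes represent distinct non-proportional elements of $H_1(T)$, whence $f(T,\overline{M_x(Q_i)})\ne f(T,\overline{M_{x'}(Q_j)})$ in $PH_1(T)$. The main obstacle is bookkeeping for the fiber class on the shared quadrangle, but the twisted prisms condition in item (1)(b) of Theorem \ref{M-th} makes the combinatorial comparison essentially automatic.
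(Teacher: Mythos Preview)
Your argument is correct and reaches the same conclusion as the paper, but by a genuinely different route for the exclusion of the three exceptional pieces.

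The paper first treats the universal case $\Lambda=E$, where each prism piece is explicitly $S^2_{g,s}\times S^1$ with $s\geqslant 4$, and then proves a standalone fundamental-group lemma (Lemma~\ref{Sgslemma}): $S^2_{g,s}\times S^1$ with $s\geqslant 3$ cannot cover a space homotopy equivalent to $D^2\times S^1$, $T^2\times I$, or $K^2\widetilde\times I$, because $\pi_1$ contains a free group $F_2$ while the targets have virtually abelian $\pi_1$. For general $\Lambda$ the piece $\overline{M_x(Q_i)}$ is a free quotient of $S^2_{g,s}\times S^1$, so if it were one of the exceptional pieces the latter would cover it, contradicting the lemma. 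The paper also writes out the induced Seifert fibration concretely, classifying which fibers are regular and which are singular of type $(2,1)$.

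You instead invoke the $\mathbb H^2\times\mathbb R$ geometry directly: the vertical foliation of Construction~\ref{MVconstr} descends to a Seifert fibration whose base is a finite-area hyperbolic $2$-orbifold (since $k\geqslant 5$ forces the right-angled $k$-gon to have positive area), hence $\chi^{\mathrm{orb}}<0$; the three exceptional pieces all have virtually abelian $\pi_1$ and admit no Seifert structure over a hyperbolic base. This is cleaner and avoids the explicit covering computation, at the cost of appealing to the standard dictionary between base-orbifold Euler characteristic and geometric type of Seifert manifolds. One small slip: $D^2\times S^1$ is not ``modelled on $S^3$'' in the sense of Theorem~\ref{GDT}; the exclusion really comes from the $\pi_1$ (or $\chi^{\mathrm{orb}}$) comparison, so phrase it that way. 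Also, for prism pieces the boundary tori come only from canonical belts, not from free quadrangles, so the reference to Example~\ref{F4ex} is superfluous here.

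For the comparison of regular fibers on a shared torus, your argument and the paper's coincide: both read off that the twisted-prisms condition forces the two fiber directions to be the classes $(1,0)$ and $(0,1)$ in $H_1(T)$, hence distinct in $PH_1(T)$.
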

\begin{remark}
If $P$ is a $k$-prism, $k\geqslant 4$, then the same argument as in the proof of Proposition \ref{MPprop}
gives a structure of a closed Seifert fibered manifold on an orientable manifold $M(P,\Lambda)$.
\end{remark}
\begin{proof}[Proof of Proposition \ref{MPprop}]
First consider the case $\Lambda=E$, and $M(P,E)=\mathbb R\mathcal{Z}_P$.

Consider the $k$-gon $L$, which is the base of the prism $S=L\times I$. 
There are $l$ of its edges corresponding to 
the canonical $4$-belts of $P$ (we will call these edges "singular").  Then copies of each base $L$ of $S$ in $\RZ_P$ are glued 
into several copies of an oriented $2$-dimensional submanifold $N$, which is homeomorphic to a sphere 
$S^2_{g,s}$ with $g$ handles and $s$ holes.  
Denote by $L'$ the polygon obtained by contraction of $l$ singular edges of $L$, and $k'=k-l$.  
It is easy to see that $k'\geqslant 3$.
Denote by $\widehat{N}\simeq S^2_g$ the
corresponding manifold without holes. Then $\widehat{N}\simeq\mathbb{R}\mathcal{Z}_{L'}$. In 
particular, $g=1+(k'-4)2^{k'-3}$ according to Example \ref{ZPkex}. Each singular 
edge of $L$ corresponds to a disjoint set of $2^{k'-2}$ holes with the boundary of each hole consisting of $4$ edges. 
Thus,  there are $s=l2^{k'-2}$ holes. Then in $\mathbb{R}\mathcal{Z}_P$ the $k$-prism corresponds to a disjoint union of 
submanifolds homeomorphic to $S^2_{g,s}\times S^1$. 
We have $l\geqslant 2$ for $k'=3$, and $l\geqslant 1$ for $k'\geqslant 4$. Therefore, $s\geqslant 4$.

\begin{lemma}\label{Sgslemma}
The manifold $S^2_{g,s}\times S^1$ with $g\geqslant 0$ and $s\geqslant 3$ can not cover a manifold 
homotopy equivalent to $D^2\times S^1$, $T^2\times I$ and $K^2\widetilde{\times} I$.
\end{lemma}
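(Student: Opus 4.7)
The plan is to obstruct such a covering by a comparison of fundamental groups. First I would compute $\pi_1(S^2_{g,s}\times S^1)$: since $S^2_{g,s}$ with $s\geqslant 1$ deformation retracts onto a wedge of $2g+s-1$ circles, we have
$$
\pi_1(S^2_{g,s}\times S^1)\cong F_n\times\mathbb Z,
$$
where $F_n$ is the free group of rank $n=2g+s-1$. Under the hypothesis $g\geqslant 0$ and $s\geqslant 3$ we get $n\geqslant 2$, so this group contains a non-abelian free subgroup and in particular is \emph{not} virtually abelian.

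Next I would identify the fundamental groups of the three target homotopy types. Since $D^2$ is contractible, $D^2\times S^1$ is homotopy equivalent to $S^1$, with $\pi_1\cong\mathbb Z$. The space $T^2\times I$ is homotopy equivalent to $T^2$, with $\pi_1\cong\mathbb Z^2$. The space $K^2\widetilde{\times} I$ deformation retracts onto its zero section $K^2$, whose fundamental group is the Klein bottle group; the standard double cover $T^2\to K^2$ exhibits $\mathbb Z^2$ as an index-$2$ subgroup, so this group is virtually abelian. All three candidate target groups are therefore virtually abelian.

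Finally I would invoke the fact that any covering map induces an injection on fundamental groups. If $S^2_{g,s}\times S^1$ covered a manifold $M$ homotopy equivalent to one of the three spaces above, then $F_n\times\mathbb Z$ with $n\geqslant 2$ would embed as a subgroup of a virtually abelian group. But every subgroup of a virtually abelian group is virtually abelian (intersect with the finite-index abelian subgroup), contradicting the conclusion of the first paragraph. This finishes the proof.

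I do not expect a serious obstacle: the argument is a single group-theoretic obstruction. The only points worth double-checking are the rank formula $n=2g+s-1$ for $\pi_1(S^2_{g,s})$ (valid because $s\geqslant 1$ forces $S^2_{g,s}$ to collapse to a $1$-complex of Euler characteristic $1-n=2-2g-s$) and the fact that the Klein bottle group is virtually $\mathbb Z^2$, both of which are standard.
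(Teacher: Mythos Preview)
Your proof is correct and follows essentially the same approach as the paper: both compute $\pi_1(S^2_{g,s}\times S^1)\cong F_n\times\mathbb Z$ with $n=2g+s-1\geqslant 2$, note the presence of a non-abelian free subgroup, and derive a contradiction from the injectivity of $\pi_1$ under coverings. The only difference is cosmetic: where you invoke the single phrase ``subgroups of virtually abelian groups are virtually abelian'' to handle the Klein bottle case, the paper spells out the same fact by intersecting $F_2$ with the index-$2$ abelian subgroup $\pi_1(T^2)\subset\pi_1(K^2)$ and observing the result would have to be simultaneously $F_3$ and abelian.
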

\begin{proof}
Set $X=S^2_{g,s}\times S^1$. The manifold $S^2_{g,s}$ is homotopy equivalent to a wedge of $p=2g+s-1\geqslant 2$ circles.
In particular, $\pi_1(X)=F_p\times \mathbb Z$, where $F_p$ is a free group on $p$ generators, and there is a
subgroup in $\pi_1(X)$ isomorphic to $F_2$. If  $\varphi\colon X\to Y$ is a covering, then $\varphi_*$ is an injection, and $\pi_1(Y)$
contains a subgroup $G$ isomorphic to $F_2$. Then $Y$ is not homotopy equivalent to $D^2\times S^1$ and $T^2\times I$, since 
their fundamental groups are abelian. If $Y$ is homotopy equivalent to $K^2\widetilde{\times} I$,
then $\pi_1(Y)\simeq \pi_1(K^2)$ and we will identify these groups. 
Consider the $2$-sheeted covering $\psi\colon T^2\to K^2$. 
This corresponds to the short exact sequence $1\to\pi_1(T^2)\to \pi_1(K^2)\to \mathbb Z_2\to 1$.
Since $G$ is not abelian, it is not contained in $\psi_*(\pi_1(T^2))$, and there is a short exact sequence 
$1\to{\rm Im}\,\psi_*\cap G\to G\to \mathbb Z_2\to 1$, 
which shows that the subgroup ${\rm Im}\,\psi_*\cap G$ has index $2$ in $G$.
Then it corresponds to a $2$-sheeted covering $\zeta\colon Z\to S^1\vee S^1$,
where $Z$ is connected.  In this case $Z$ is homotopy equivalent to a wedge of $3$ circles, and  
${\rm Im}\,\psi_*\cap G\simeq F_3$. But the group ${\rm Im}\,\psi_*\cap G$ is abelian. A contradiction. 
This finishes the proof.
\end{proof}

Thus, each piece of $\mathbb R\mathcal{Z}_P$ corresponding to a $k$-prism, is not homeomorphic to 
$D^2\times S^1$, $T^2\times I$ and $K^2\widetilde{\times} I$, and has a unique Seifert fibered structure by 
Theorem \ref{sun-th}. If two pieces $M_1$ and $M_2$ have a common boundary torus $T$, 
they correspond to different prisms.
Since the prisms are twisted, the elements $f(T,M_1)$ and $f(T,M_2)$ in $PH_1(T^2)$ 
correspond to the classes of  $(1,0)$ and $(0,1)$, and are different. 

Now consider the case of any vector-colouring $\Lambda$. 
Each manifold $\overline{M_x(Q_i)}$ corresponding to a $k$-prism is homeomorphic to a quotient space of 
$S^2_{g,s}\times S^1$ by a freely acting subgroup isomorphic to $H(\Lambda)\cap \mathbb Z_2^{Q_i}$, where 
$\mathbb Z_2^{Q_i}$ is a linear span of basis vectors $e_j\in\mathbb Z_2^m$ corresponding to facets of $Q_i$ (remind that $Q_i$
is obtained from the prism $L\times I$ by the deletion of the quadrangles corresponding to canonical belts). The action 
moves fibers to fibers and induces the following Seifert fibered structure on $\overline{M_x(Q_i)}$.
Fibers are again formed by the copies of $z\times I\subset L\times I$. 
Denote by $F_p$ and $F_q$ facets of $P$ containing bases of the prism.  
Then in $\overline{M_x(Q_i)}$ the copies of $z\times I\subset L\times I$ form circles $C(z)$.

\begin{enumerate}
\item If the point $z$ lies in the interior of $L$, or in the interior of a singular edge, then there is a neighbourhood 
$U(z)$ such that the copies of $U(z)\times I$ are glued to a direct product $U(z)\times S^1$, where $S^1$
is glued of $2^{\dim \langle\Lambda_p,\Lambda_q\rangle}$ copies of $I$. Hence, the fiber 
$C(z)$ is regular. 

\item If $z$ lies  in the interior of a regular edge of $L$, or it is a common vertex of a regular edge and a 
singular edge, then the regular edge corresponds to some facet $F_a$ of $P$
with $\Lambda_a\notin\{\Lambda_p,\Lambda_q\}$.
Since $M(P,\Lambda)$ is orientable, either $\Lambda_p=\Lambda_q$, or $\Lambda_p\ne\Lambda_q$ and 
$\Lambda_a\notin\langle \Lambda_p,\Lambda_q\rangle$. In both cases there is a neighbourhood 
$U(z)$ such that the copies of $U(z)\times I$ are glued to a direct product $U(z)\times S^1$, where $S^1$
is glued of $2^{\dim \langle\Lambda_p,\Lambda_q\rangle}$ copies of $I$. Hence, the fiber $C(z)$ is regular.

\item If $z$ is a common vertex of two regular edges of $L$, then these edges correspond to facets $F_a$ and $F_b$ of $P$,
with $\Lambda_p,\Lambda_q\notin \langle \Lambda_a,\Lambda_b\rangle$. If $\Lambda_p=\Lambda_q$,
then again $C(z)$ has a trivial tubular neighbourhood. If $\Lambda_p\ne\Lambda_q$, then 
$\Lambda_q=\Lambda_a+\Lambda_b+\Lambda_p$, since $M(P,\Lambda)$ is orientable. 
In this case $C(z)$ has a tubular neighbourhood 
that forms a standard fibered torus corresponding to the rotation by angle $\pi$.
\end{enumerate}

Lemma \ref{Sgslemma} implies that $\overline{M_x(Q_i)}$ is not not homeomorphic to 
$D^2\times S^1$, $T^2\times I$ and $K^2\widetilde{\times} I$, and has a unique Seifert fibered structure by 
Theorem \ref{sun-th}.  If two pieces $M_x(Q_i)$ and $M_y(Q_j)$ have a common boundary torus $T$, 
they correspond to different prisms. Since the prisms are twisted, Example \ref{B4ex}
shows that the elements $f(T,M_1)$ and $f(T,M_2)$ in $PH_1(T^2)$ are different. 
\end{proof}

Now we are ready to finish the proof of Theorem \ref{M-th}.

By Theorem \ref{hypnoSei} pieces corresponding to almost Pogorelov polytopes without adjacent quadrangles 
do not have a structure of a Seifert fibered manifold, and all the Seifert fibered pieces of $M(P,\Lambda)$ 
correspond to $k$-prisms and to Klein bottles arising from some free quadrangles of almost Pogorelov polytopes.
The latter pieces have a  common boundary only with hyperbolic pieces.
Thus, Propositions \ref{MPprop} and Proposition \ref{JSJcrit} imply that we indeed have a $JSJ$-decomposition.

To use Proposition \ref{GDP} we need to prove that orientable $M(P,\Lambda)$ can not be a $Sol$-manifold.
As it is mentioned in this Proposition, a $Sol$-manifold has one $JSJ$-torus, moreover, either both components of the complement of this torus are $K^2\widetilde{\times}I$, or the complement to the torus is homeomorphic to $T^2\times I$.
For $M(P,\Lambda)$ the hyperbolic pieces are not homeomorphic to $K^2\widetilde{\times}I$ and $T^2\times I$, since
they do not admit a Seifert fibered structure. Also we have proved that pieces corresponding to 
$k$-prisms are not homeomorphic to $K^2\widetilde{\times}I$ and $T^2\times I$. Only a piece corresponding
to a Klein bottle arising from a free quadrangle can 
be homeomorphic to $K^2\widetilde{\times}I$, but not to $T^2\times I$. Then the adjacent piece with 
the same boundary component is hyperbolic, which is a contradiction. This finishes the proof of Theorem \ref{M-th}.

\begin{corollary}
For orientable manifolds $M(P,\Lambda)$ defined by vector-colourings of simple $3$-polytopes the following five of eight Thurston's geometries arise:
\begin{itemize}
\item $S^3$ for  the simplex $\Delta^3$;
\item $S^2\times \mathbb R$ for the $3$-prism $\Delta^2\times I$;
\item  $\mathbb R^3$ for the cube $I^3$;
\item $\mathbb{H}^2\times\mathbb R$ for $k$-prisms, $k\geqslant 5$, and pieces corresponding to them;
\item $\mathbb H^3$ for Pogorelov polytopes and pieces corresponding to almost Pogorelov polytopes.
\end{itemize}
\end{corollary}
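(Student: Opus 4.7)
The plan is to combine the case analysis of Theorem \ref{M-th} with Construction \ref{MVconstr} to exhibit each of the five listed geometric structures and then to argue that no further Thurston geometry can appear on an orientable $M(P,\Lambda)$.

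For the three elementary cases, each polytope admits a direct right-angled realization: $\Delta^3$ is a spherical orthant in $S^3$, so Construction \ref{MVconstr} endows $M(\Delta^3,\Lambda)$ with an $S^3$-structure; $\Delta^2\times I$ is the product of a spherical right-angled triangle and a segment, giving $M(\Delta^2\times I,\Lambda)$ an $S^2\times\mathbb R$-structure; and $I^3$ is a right-angled Euclidean cube, yielding a Euclidean structure on $M(I^3,\Lambda)$. In each case the identification with the orbit space of the appropriate freely acting kernel subgroup is built into Construction \ref{MVconstr}.

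For flag polytopes $P\ne I^3$, one distinguishes the pieces arising from the canonical decomposition of Theorem \ref{M-th}. If $P$ is itself a $k$-prism with $k\geqslant 5$, then Andreev's theorem yields a right-angled hyperbolic $k$-gon whose product with a segment is a right-angled polytope of finite volume in $\mathbb H^2\times\mathbb R$; Construction \ref{MVconstr} then furnishes $M(P,\Lambda)$ with an $\mathbb H^2\times\mathbb R$-structure. If $P$ is a Pogorelov polytope, the Pogorelov-Andreev theorem gives a unique compact right-angled realization in $\mathbb H^3$, and Construction \ref{MVconstr} supplies a closed hyperbolic structure. In the remaining cases, the pieces $Q_i$ from Theorem \ref{M-th}(1) are either $k$-prisms ($k\geqslant 5$), realized as finite-volume right-angled polytopes in $\mathbb H^2\times\mathbb R$ after deleting the appropriate ideal vertices, or almost Pogorelov polytopes without adjacent quadrangles, realized as finite-volume right-angled polytopes in $\mathbb H^3$ with free quadrangles corresponding to ideal vertices. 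In either subcase Construction \ref{MVconstr}, applied to the induced vector-colouring $\Lambda_{Q_i}$, provides the required finite-volume geometric structure on the corresponding piece of $M(P,\Lambda)$.

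It remains to rule out the three geometries $\mathrm{Nil}$, $\widetilde{SL(2,\mathbb R)}$, and $\mathrm{Sol}$. The Sol-case was already excluded during the proof of Theorem \ref{M-th}. For the remaining two, every piece of the minimal geometric decomposition from Theorem \ref{M-th} has just been endowed with one of the five listed finite-volume geometric structures; since a finite-volume geometric structure on a piece of a minimal geometric decomposition is unique among the eight Thurston geometries, no piece can carry a Nil- or $\widetilde{SL(2,\mathbb R)}$-structure. The main obstacle would be to verify the exhaustiveness of the combinatorial dichotomy into prisms and almost Pogorelov pieces in Theorem \ref{M-th}(1), but this is precisely what that theorem asserts.
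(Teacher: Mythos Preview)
Your proposal is correct and matches the paper's approach: the corollary is stated in the paper without a separate proof, as it is an immediate summary of Theorem~\ref{M-th} together with Construction~\ref{MVconstr} and the preceding discussion in the introduction; your argument simply makes this explicit, including the exclusion of $Sol$ already carried out at the end of the proof of Theorem~\ref{M-th} and the uniqueness of finite-volume geometric structures to rule out $Nil$ and $\widetilde{SL(2,\mathbb{R})}$.
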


\section{Acknowledgements}
This work is supported by the Russian Science Foundation under grant 20-11-19998.
The  author is grateful to Victor Buchstaber for his encouraging support and attention to this work, to Taras Panov
for his advises, and to Vladimir Shastin and Dmitry Gugnin for fruitful discussions.

\end{document}